\documentclass[eqno] {amsart}
\usepackage{amscd}
\usepackage{amssymb}
\usepackage{cite}
\usepackage{amsmath}
\usepackage[usenames, dvipsnames]{color}

\usepackage{bookmark}
\bookmarksetup{level=section}
\usepackage{enumerate}

\sloppy

\tolerance=10000
\emergencystretch=\maxdimen
\hyphenpenalty=10000
\hbadness=10000

\marginparwidth 1in

\def\w*lim{\mathop{\mbox{\textup{w*-lim}}}}

\newtheorem{theorem}{\sc \textbf{Theorem}}[section]
\newtheorem{lemma}[theorem]{\sc \textbf{Lemma}}
\newtheorem{corollary}[theorem]{\sc \textbf{Corollary}}
\newtheorem{proposition}[theorem]{\sc \textbf{Proposition}}
\newtheorem{remark}[theorem]{\sc \textbf{Remark}}
\newtheorem{definition}[theorem]{\sc \textbf{Definition}}
\newtheorem{example}[theorem]{\sc \textbf{Example}}
\newtheorem{question}[theorem]{\sc Question}

\newcounter{cnt1}
\newcounter{cnt2}
\newcounter{cnt3}
\newcounter{cnt4}
\newcommand{\blr}{\begin{list}{$($\roman{cnt1}$)$} {\usecounter{cnt1}
 \setlength{\topsep}{0pt} \setlength{\itemsep}{0pt}}}
\newcommand{\blR}{\begin{list}{\Roman{cnt4}.\ } {\usecounter{cnt4}
 \setlength{\topsep}{0pt} \setlength{\itemsep}{0pt}}}
\newcommand{\bla}{\begin{list}{$($\alph{cnt2}$)$} {\usecounter{cnt2}
 \setlength{\topsep}{0pt} \setlength{\itemsep}{0pt}}}
\newcommand{\bln}{\begin{list}{$($\arabic{cnt3}$)$} {\usecounter{cnt3}
 \setlength{\topsep}{0pt} \setlength{\itemsep}{0pt}}}
\newcommand{\el}{\end{list}}

\newcommand{\real }{{\mbox{Re}}}
\newcommand{\imm }{{\mbox{Im}}}

\newcommand{\cE}{{\mathcal E}}
\newcommand{\cF}{{\mathcal F}}

\newcommand{\cH}{{\mathcal H}}

\newcommand{\cL}{{\mathcal L}}
\newcommand{\cM}{{\mathcal M}}
\newcommand{\cN}{{\mathcal N}}

\newcommand{\cP}{{\mathcal P}}

\newcommand{\cZ}{{\mathcal Z}}

\newcommand{\nn}[1]{{\left\vert\kern-0.25ex\left\vert\kern-0.25ex\left\vert #1
    \right\vert\kern-0.25ex\right\vert\kern-0.25ex\right\vert}}

\begin{document}
\title[Logarithmic submajorisation and order-preserving linear isometries]{Logarithmic submajorisation and order-preserving  linear isometries}
\keywords{Strictly log-monotone $\Delta$-norms; logarithmic submajorization; order-preserving  isometries;  noncommutative symmetrically $\Delta$-normed spaces; noncommutative Lorentz spaces; Lamperti operators}
\subjclass[2010]{46B04, 46L52, 46A16}
\date{}
{\renewcommand{\baselinestretch}{1}

\begin{abstract}
Let $\cE$ and $\cF$ be
noncommutative
 operator spaces   affiliated with semifinite von Neumann algebras $\cM_1$ and $\cM_2$, respectively.
We establish a noncommutative version of Abramovich's theorem \cite{A1983}, which provides the    general form of  normal order-preserving linear operators $T:\cE \stackrel{into}{\longrightarrow} \cF$ having the disjointness preserving property.
As an application, we obtain a noncommutative  Huijsmans-Wickstead theorem \cite{Huijsmans_W}.
By establishing  the disjointness preserving property  for an order-preserving isometry $T:\cE\rightarrow \cF$ from a noncommutative symmetrically $\Delta$-normed (in particular, quasi-normed) space into another,
we obtain the
existence of a Jordan $*$-monomorphism from $\cM_1$ into $\cM_2$ and the  general form of this isometry,
which extends and complements a number of existing results such as
 \cite[Theorem 1]{Broise}, \cite[Corollary 1]{Russo}, \cite[Theorem 2]{Sourour} and \cite[Theorem 3.1]{CMS}.
 In particular, we fully resolve the case when $\cF$ is the predual of $\cM_2$ and other untreated cases   in  \cite{SV}.
\end{abstract}
}

\author[J. Huang]{J. Huang}
\address[Jinghao Huang]{School of Mathematics and Statistics, University of New South Wales, Kensington, 2052, NSW, Australia  \emph{E-mail~:} {\tt
jinghao.huang@unsw.edu.au}}

\author[F. Sukochev]{F. Sukochev}
\address[Fedor Sukochev]{School of Mathematics and Statistics, University of New South Wales, Kensington, 2052, NSW, Australia  \emph{E-mail~:} {\tt f.sukochev@unsw.edu.au}
}

\author[D. Zanin]{D. Zanin}
\address[Dmitriy Zanin]{School of Mathematics and Statistics, University of New South Wales, Kensington, 2052, NSW, Australia  \emph{E-mail~:} {\tt d.zanin@unsw.edu.au}
}

\maketitle

\section{Introduction}

Let $\cM$ be a  von Neumann algebra
equipped with a faithful normal semifinite trace $\tau$.
The $*$-algebra $S(\mathcal{M},\tau)$ of all $\tau$-measurable operators affiliated with $\cM$
 is
fundamentally important  in noncommutative integration theory and/or in
(semifinite version of) noncommutative geometry because it  contains all $\mathcal{M}$-bimodules of interest in these fields.
Noncommutative $L_p$-spaces,  or,  more generally,   noncommutative
symmetric spaces,  associated with $\mathcal{M}$
 are solid subspaces in $S(\cM,\tau)$ \cite{LSZ,Tak},
which are equipped with unitarily invariant (quasi)-norms (or even $\Delta$-norms).
We view such bimodules
 as the noncommutative counterpart of the rearrangement invariant function spaces (see e.g. \cite{HM,Astashkin}) which are important  examples of partially  ordered topological vector spaces \cite{Jameson,Meyer_N}.
Indeed,  the real subspace $S_h(\cM,\tau)$ (respectively, $E_h(\cM,\tau)$) of $S(\cM,\tau)$ (respectively, a symmetrically $\Delta$-normed space $E(\cM,\tau)$) consisting of all self-adjoint elements in $S(\cM,\tau)$ (respectively, $E(\cM,\tau)$) is a partially ordered vector space.
Here, the partial ordering is an extension of the  natural  ordering in $
 \cM_h$, the real subspace of $\cM$ consisting of all self-adjoint operators.
The prime intention of this paper is to demonstrate that any order-preserving (or positive) linear  isometry  from one such bimodule into another
is generated by a Jordan $*$-monomorphism.
In particular,
let $\mathcal{E}=E(\mathcal{M}_1,\tau_1)$ and $\mathcal{F}=F(\mathcal{M}_2,\tau_2)$ be  symmetrically $\Delta$-normed operator spaces associated with semifinite von Neumann algebras  $(\mathcal{M}_1,\tau_1)$ and $(\cM_2,\tau_2)$, respectively.
We note that any lattice of measurable functions on the real line can be equipped with a $\Delta$-norm but not necessarily a norm.
The so-called  $\Delta$-normed operator spaces are the  natural noncommutative counterparts of lattices of measurable functions, and order-preserving isometries (or even general order-preserving linear operators) on  $\Delta$-normed operator spaces are the noncommutative counterparts of linear operators on lattices.
In this paper, we show that if  there exists an   order-preserving   linear isometry  $T:\cE\rightarrow \cF$ (i.e., $T(x)\ge 0$, $\forall 0\le x\in \cE$), then $\cM_1$ and a weakly closed $*$-subalgebra of $\cM_2$ are Jordan $*$-isomorphic.
Even though order-preserving isometries are  proper noncommutative counterparts of isometries between function spaces \cite{KR,FJ},
isometries of self-adjoint parts of symmetric operator spaces are  much harder to describe than those for  function spaces.


We shall omit the adjective ``linear" as we do not consider non-linear isometries in this paper.
The description of    isometries from one  noncommutative space into/onto  another
 has been widely studied since the 1950s \cite{Kadison51}.
In particular,
\mbox{Kadison \cite{Kadison51}} showed that a surjective isometry between two von Neumann algebras can be written as
 a Jordan $*$-isomorphism  multiplied by  a unitary operator,
which should be considered as a  noncommutative version of the  Banach-Stone Theorem \cite{Banach}.
After the non-commutative $L_p$-spaces were introduced in the 1950s \cite{Se},
the description of $L_p$-isometries was  investigated by Broise \cite{Broise}, Russo \cite{Russo}, Arazy \cite{Arazy} and Tam \cite{Tam}.
Finally, the complete description (for the semifinite case) was  obtained in 1981 by Yeadon \cite{Y}, i.e.,
every  isometry  $T:L_p(\cM_1,\tau_1) \stackrel{into}{\longrightarrow} L_p(\cM_2,\tau_2)$, $1\le p\ne 2<\infty$, is generated by  a Jordan $*$-isomorphism from $\cM_1$ onto a weakly closed $*$-subalgebra of $\cM_2$ (see \cite{Broise} for order-preserving isometries on noncommutative $L_2$-spaces, see also \cite{LZ}).
In the present paper, we concentrate on the following general  question.

\begin{question}\label{Q1}
Let  $\cE $ and $\cF $ be  symmetrically $\Delta$-normed operator spaces, respectively associated with semifinite von Neumann algebras $(\mathcal{M}_1,\tau_1)$ and $(\cM_2,\tau_2)$.
What is the general form of  the order-preserving  isometries $T$ from  $ \cE $ into $  \cF$?
i.e., is every   order-preserving  isometry $T: \cE \rightarrow   \cF$ generated by a Jordan $*$-homomorphism from $\cM_1$ into $\cM_2$?
\end{question}

When $\cM_1,\cM_2$ are finite von Neumann algebras and $\cE$ and $\cF$ are some special examples of Banach symmetric spaces,
surjective isometries have been widely studied (see e.g. \cite{MS1,MS2,Russo, CMS}).
The case for injective isometries  is substantially  more involved than for surjective isometries, which has been recently  treated in~\cite{SV}.
When $\cE$ is a (Banach) symmetric space and $\cF$ is a fully symmetric space having a strictly $K$-monotone norm (see Section \ref{s:p}),   all order-preserving isometries   $T:\cE \stackrel{into}{\longrightarrow} \cF$ are generated by Jordan $*$-homomorphisms from $\cM_1$ into $\cM_2$ \cite{SV}.
However, even the usual $L_1$-norm is not strictly $K$-monotone and this important case could not be  treated by techniques developed   in \cite{SV}.
Moreover, when   $\cM_1$ and $\cM_2$ are semifinite,
the unit element ${\bf 1}_{\cM_1}$ does not belong to any symmetric space affiliated with $\cM_1$ having order continuous norm when $\tau_1({\bf 1}_{\cM_1} )=\infty$ (see Remark \ref{remark:ocb}, see also \cite{DP2,DDP2,DPS}).
This fact
presents additional technical obstacles and
the description of order-preserving isometries $T:\cE \stackrel{into}{\longrightarrow} \cF$ in the general semifinite case  was left open  \cite[Section 5]{SV}.
In this   case, the general form of  isometries $T: \cE \stackrel{into}{\longrightarrow} \cF$ is obtained  only in the special  case
when   $\cE= L_p(\cM_1,\tau_1)$ and $\cF= L_p(\cM_2,\tau_2) $, $p> 0$, $p\ne 2$ (see e.g. \cite{Sherman2005,Y,Sherman,JRS}, see also \cite{S-1996,Sourour} for results for symmetric spaces affiliated with specific semifinite algebras).
One of the initial motivations of the present paper is to resolve the problem left in
\cite[Section 5]{SV} and to present new approaches which allow for study of order-preserving isometries of quasi-normed spaces and $\Delta$-normed spaces.
Our results are new even in the classical (commutative) setting (see e.g. \cite{Arazy1985,KR}), as we are able to treat injective isometries   between symmetrically quasi-normed, and even $\Delta$-normed, spaces,
which appear to be non-amenable to any previously used techniques mostly developed for Banach spaces setting.

To further elaborate this point, recall that  every symmetrically normed operator space is a subspace of $L_1(\cM,\tau)+\cM$ (see e.g. \cite{KPS,LSZ,DP2}).
The  $L_{\log}$-space $\cL_{\log}(\cM,\tau)$
 plays a similar role for  $\Delta$-normed/quasi-normed spaces as $L_1(\cM,\tau)$ does in the normed case,
 which was  recently introduced and studied in \cite{DSZ2015}.
 That is, the majority of symmetrically $\Delta$-normed spaces   used in analysis are subspaces of $\cL_{\log}(\cM,\tau)+\cM$.
In the present paper, we consider $\Delta$-normed operator spaces $\cF$  which are subspaces of   $ \cL_{\log}(\cM_2,\tau_2)+\cM_2$.

The main method used for the description of isometries
is to establish and employ the  ``disjointness preserving'' property, which
underlies all investigations in the general (noncommutative) case.
This idea lurks in the background of Yeadon's description \cite{Y} (see also \cite{Sherman2005,Sherman,Tam})
of isometries of noncommutative $L^p$-spaces ($1\le p\ne 2 <\infty$),
whose  proof  relies on the study when we have the equality in the Clarkson's inequality.
However,
Abramovich \cite[Remark 2, p.78]{A1} emphasised that
order-preserving isometries from a (Banach) symmetric   space $\mathcal{E}$ into another (Banach) symmetric   space
$\mathcal{F}$ may not necessarily enjoy the  ``disjointness preserving'' property,  even in the commutative setting.
Still,
in the commutative setting, the
``disjointness preserving'' property of order-preserving isometries can be
 guaranteed by the so-called {\it strict monotonicity} of the norm $\left\|\cdot\right\|_\cF$.
 That is,  by the assumption that $0\leq z_1<z_2\in \cF$, we have  $\left\|z_1\right\|_\cF< \left\|z_2 \right\|_\cF$.
 It is natural to
  consider the following question in order to answer   Question \ref{Q1}.
 \begin{question}
Let the  conditions of Question \ref{Q1} hold.
Does  $T$ preserves disjointness? That is, does the equality $T(x)T(y)=0$ hold whenever $xy=0$, $0\le x,y \in \cE$?
\end{question}
The so-called {\it strictly $K$-monotone norms} (see \cite{SV} or Section \ref{s:p})
 form a proper noncommutative counterpart to  the   notion of strictly monotone norms, which were introduced in \cite{ S-1992, CDSS, DSS} as an important component in the characterisation of Kadec-Klee type properties.
 Using the \lq\lq the triangle inequality for the  Hardy-Littlewood preorder\rq\rq\,  introduced in \cite{CS1, CMS} and analysing when this inequality turns into equality,
 it is shown in \cite{SV} that every order-preserving isometry into $\cF$ possesses the ``disjointness preserving'' property whenever $\cF\subset (L_1+L_\infty)(\cM_2,\tau_2) $ is a symmetric space  with   strictly $K$-monotone  norm $\left\|\cdot\right\|_\cF$.
The drawback of this approach is the fact that many important symmetric norms fail to be  strictly K-monotone. In particular, as mentioned before, even the usual $L_1$-norm is not strictly $K$-monotone.
To rectify this drawback and cover maximally wide class of (quasi-normed and $\Delta$-normed) symmetric spaces,
 we introduce the   notion of \emph{strictly log-monotone (SLM) $\Delta$-norms} (see Section \ref{s:p}), which should be considered as a far-reaching generalisation of the strict $K$-monotonicity.
The class of SLM $\Delta$-norms embraces an  extensive class of symmetric $\Delta$-norms.
For example,  the usual $L_p$-norms ($0<p<\infty$),  Lorentz quasi-norms and the log-integrable-$F$-norm $\left\|\cdot\right\|_{\log}$,   are  examples of SLM $\Delta$-norms (for which we refer to  Section \ref{s:p} and Section \ref{s:l}).
Using techniques developed from  detailed study of logarithmic submajorisation, we show   that  every order-preserving  isometry $T:\cE \stackrel{into} {\longrightarrow} \cF$ possesses the  \lq\lq disjointness preserving\rq\rq\ property if  $\left\|\cdot \right\|_\cF$ is an  SLM $\Delta$-norm.
Surprisingly, this result appears to be new even for symmetrically $\Delta$-normed function spaces.

With \lq\lq disjointness preserving\rq\rq\ property at hand,  we establish a general description of all   order-preserving injective  isometries $T$ as above,
showing that every such isometry is  generated by a Jordan $*$-isomorphism from $\cM_1$ onto a weakly closed $*$-subalgebra of $\cM_2$.
The description of disjointness preserving operators on Banach lattices has been well-studied (see \cite{AK1,A1983}, see also \cite{Arendt,Araujo,LW}).
In particular, Abramovich \cite{A1983} obtained the general description of order-continuous (or normal) disjointness-preserving operators on banach lattices.
However, due to the lack of structure of lattices, the case for disjointness-preserving operators on  noncommutative operator spaces is much harder than that on lattices.
As far as we know, there is no literature on this theme.
One of the  main results   of the present paper is a noncommutative version of Abramovich's theorem \cite{A1983} (see also \cite{AK1}), which allows us to describe the general form of order-preserving isometries.
As a consequence, we establish a noncommutative version of  Huijsmans-Wickstead theorem \cite{Huijsmans_W}.

This result extends and strengthens  a number of existing results in the literature.
On the one hand, we extend \cite[Proposition 6]{SV} (see also \cite{Russo, Katavolos2}) to the case of arbitrary semifinite von Neumann algebras.
 On the other hand,  we establish that the main results of \cite{SV, CMS, Broise,Katavolos2,Russo} continue to hold in a much wider setting than in those papers.
In particular,  we resolve   the $L_1$-case  which was not amenable to the techniques based on strict $K$-monotonicity used in  \cite{SV}.
When $\cM_2$ is a semifinite factor,
we obtain  a semifinite version of \cite[Corollary 1]{Russo},
showing
 that every  order-preserving  isometry $T:\cE \stackrel{onto}{\longrightarrow} \cF$ coincides with      a   $*$-isomorphism or a $*$-anti-isomorphism  multiplied  by a positive constant  on $\cE \cap \cM_1$.
In particular,
when $\cM_1 $ and $\cM_2$ are finite factors,
$  T|_{\cM_1}$  is indeed   a   $*$-isomorphism or a $*$-anti-isomorphism from $\cM_1$ onto $\cM_2$ multiplied  a positive constant,
  which recovers  and substantially extends \cite[Corollary 1]{Russo}.

\section{Preliminaries}\label{s:p}

In this section, we recall main notions of the theory of noncommutative integration, introduce some properties of generalised singular value functions and define noncommutative symmetrically $\Delta$-normed spaces.
In what follows,  $\cH$ is a Hilbert space and $B(\cH)$ is the
$*$-algebra of all bounded linear operators on $\cH$, and
$\mathbf{1}$ is the identity operator on $\cH$.
Let $\mathcal{M}$ be
a von Neumann algebra on $\cH$.
For details on von Neumann algebra
theory, the reader is referred to e.g. \cite{Dixmier}
or \cite{Tak}. General facts concerning measurable operators may
be found in \cite{Nelson}, \cite{Se} (see also the forthcoming book \cite{DPS}).
For convenience of the reader, some of the basic definitions are recalled.

\subsection{$\tau$-measurable operators and generalised singular values}

A linear operator $x:\mathfrak{D}\left( x\right) \rightarrow \cH $,
where the domain $\mathfrak{D}\left( x\right) $ of $x$ is a linear
subspace of $\cH$, is said to be {\it affiliated} with $\mathcal{M}$
if $yx \subseteq xy$ for all $y\in \mathcal{M}^{\prime }$, where $\mathcal{M}^{\prime }$ is the commutant of $\mathcal{M}$.
A linear
operator $x:\mathfrak{D}\left( x\right) \rightarrow \cH $ is termed
{\it measurable} with respect to $\mathcal{M}$ if $x$ is closed,
densely defined, affiliated with $\mathcal{M}$ and there exists a
sequence $\left\{ p_n\right\}_{n=1}^{\infty}$ in the logic of all
projections of $\mathcal{M}$, $\cP\left(\mathcal{M}\right)$, such
that $p_n\uparrow \mathbf{1}$, $p_n(\cH)\subseteq\mathfrak{D}\left(x \right) $
and $\mathbf{1}-p_n$ is a finite projection (with respect to $\mathcal{M}$)
for all $n$.
It should be noted that the condition $p _{n}\left(
\cH\right) \subseteq \mathfrak{D}\left( x\right) $ implies that
$xp _{n}\in \mathcal{M}$. The collection of all measurable
operators with respect to $\mathcal{M}$ is denoted by $S\left(
\mathcal{M} \right) $, which is a unital $\ast $-algebra
with respect to strong sums and products (denoted simply by $x+y$ and $xy$ for all $x,y\in S\left( \mathcal{M}\right) $).

Let $x$ be a self-adjoint operator affiliated with $\mathcal{M}$.
We denote its spectral measure by $\{e^x\}$.
It is well known that if
$x$ is a closed operator affiliated with $\mathcal{M}$ with the
polar decomposition $x = u|x|$, then $u\in\mathcal{M}$ and $e\in
\mathcal{M}$ for all projections $e\in \{e^{|x|}\}$.
Moreover,
$x \in S(\mathcal{M})$ if and only if $x $ is closed,
 densely
defined, affiliated with $\mathcal{M}$ and $e^{|x|}(\lambda,
\infty)$ is a finite projection for some $\lambda> 0$.
 It follows
immediately that in the case when $\mathcal{M}$ is a von Neumann
algebra of type $III$ or a type $I$ factor, we have
$S(\mathcal{M})= \mathcal{M}$.
For type $II$ von Neumann algebras,
this is no longer true.
From now on, let $\mathcal{M}$ be a
semifinite von Neumann algebra equipped with a faithful normal
semifinite trace $\tau$.

For any closed and densely defined linear operator $x :\mathfrak{D}\left( x \right) \rightarrow \cH $,
the \emph{null projection} $n(x)=n(|x|)$ is the projection onto its kernel $\mbox{Ker} (x)$,
 the \emph{range projection } $r(x )$ is the projection onto the closure of its range $\mbox{Ran}(x)$ and the \emph{support projection} $s(x)$ of $x$ is defined by $s(x) ={\bf{1}} - n(x)$.

An operator $x\in S\left( \mathcal{M}\right) $ is called $\tau$-measurable if there exists a sequence
$\left\{p_n\right\}_{n=1}^{\infty}$ in $\cP \left(\mathcal{M}\right)$ such that
$p_n\uparrow \mathbf{1}$, $p_n\left( \cH \right)\subseteq \mathfrak{D}\left(x \right)$ and
$\tau(\mathbf{1}-p_n)<\infty $ for all $n$.
The collection of all $\tau $-measurable
operators is a unital $\ast $-subalgebra of $S\left(
\mathcal{M}\right) $,  denoted by $S\left( \mathcal{M}, \tau\right)
$.
It is well known that a linear operator $x$ belongs to $S\left(
\mathcal{M}, \tau\right) $ if and only if $x\in S(\mathcal{M})$
and there exists $\lambda>0$ such that $\tau(e^{|x|}(\lambda,
\infty))<\infty$.
Alternatively, an unbounded operator $x$
affiliated with $\mathcal{M}$ is  $\tau$-measurable (see
\cite{FK}) if and only if
$$\tau\left(e^{|x|}(n,\infty)\right)\rightarrow 0,\quad n\to\infty.$$
For any $x=x^*\in S\left( \mathcal{M}, \tau\right)$, we set $x_+=xe^{x}(0,\infty)$ and $x_-=xe^{x}(-\infty,0)$.

\begin{definition}
Let a semifinite von Neumann  algebra $\mathcal M$ be equipped
with a faithful normal semi-finite trace $\tau$ and let $x\in
S(\mathcal{M},\tau)$. The generalised singular value function $\mu(x):t\rightarrow \mu(t;x)$, $t>0$,  of
the operator $x$ is defined by setting
$$
\mu(t;x)
=
\inf\{\|xp\|:\ p=p^*\in\mathcal{M}\mbox{ is a projection,}\ \tau(\mathbf{1}-p)\leq t\}.
$$
\end{definition}
An equivalent definition in terms of the
distribution function of the operator $x$ is the following. For every self-adjoint
operator $x\in S(\mathcal{M},\tau) $,  setting
$$d_x(t)=\tau(e^{x}(t,\infty)),\quad t>0,$$
we have (see e.g. \cite{FK} and \cite{LSZ})
$$
\mu(t; x)=\inf\{s\geq0:\ d_{|x|}(s)\leq t\}.
$$
Note that $d_x(\cdot)$ is a right-continuous function (see e.g.  \cite{FK} and \cite{DPS}).

Consider the algebra $\mathcal{M}=L^\infty(0,\infty)$ of all
Lebesgue measurable essentially bounded functions on $(0,\infty)$.
Algebra $\mathcal{M}$ can be seen as an abelian von Neumann
algebra acting via multiplication on the Hilbert space
$\mathcal{H}=L^2(0,\infty)$, with the trace given by integration
with respect to Lebesgue measure $m.$
It is easy to see that the
algebra of all $\tau$-measurable operators
affiliated with $\mathcal{M}$ can be identified with
the subalgebra $S(0,\infty)$ of the algebra of Lebesgue measurable functions which consists of all functions $f$ such that
$m(\{|f|>s\})$ is finite for some $s>0$. It should also be pointed out that the
generalised singular value function $\mu(f)$ is precisely the
decreasing rearrangement $\mu(f)$ of the function $|f|$ (see e.g. \cite{KPS}) defined by
$$\mu(t;f)=\inf\{s\geq0:\ m(\{|f|\geq s\})\leq t\}.$$

For convenience of the reader,  we also recall the definition of the \emph{measure topology} $t_\tau$ on the algebra $S(\cM,\tau)$. For every $\varepsilon,\delta>0,$ we define the set
$$V(\varepsilon,\delta)=\{x\in S(\mathcal{M},\tau):\ \exists p \in \cP\left(\mathcal{M}\right)\mbox{ such that }
\left\|x(\mathbf{1}-p)\right\|_\infty \leq\varepsilon,\ \tau(p)\leq\delta\}.$$ The topology
generated by the sets $V(\varepsilon,\delta)$,
$\varepsilon,\delta>0,$ is called the measure topology $t_\tau$ on $S(\cM,\tau)$ \cite{DPS, FK, Nelson}.
It is well known that the algebra $S(\cM,\tau)$ equipped with the measure topology is a complete metrizable topological algebra \cite{Nelson}.
We note that a sequence $\{x_n\}_{n=1}^\infty\subset S(\cM,\tau)$ converges to zero with respect to measure topology $t_\tau$ if and only if $\tau\big( e  ^{|x_n|}(\varepsilon,\infty)\big)\to 0$ as $n\to \infty$ for all $\varepsilon>0$ \cite{DPS}.

The space  $S_0(\cM,\tau)$ of $\tau$-compact operators is the space associated to the algebra of functions from $S(0,\infty)$ vanishing at infinity, that is,
$$S_0(\cM,\tau) = \{x\in S(\cM,\tau) :  \ \mu(\infty; x) =0\}.$$
The two-sided ideal $\cF(\tau)$ in $\cM$ consisting of all elements of $\tau$-finite range is defined by
$$\cF(\tau)=\{x\in \cM ~:~ \tau(r(x)) <\infty\} = \{x \in \cM ~:~ \tau(s(x)) <\infty\}.$$
Clearly, $S_0(\cM,\tau)$ is the closure of $\cF(\tau)$ with respect to the measure topology \cite{DP2}.

A further important vector space topology on $S(\cM,\tau)$ is the \emph{local measure topology} \cite{DP2,DPS}.
A neighbourhood base for this topology is given by the sets $V(\varepsilon, \delta; p )$, $\varepsilon, \delta>0$, $p\in \cP(\cM)\cap \cF(\tau)$, where
$$V(\varepsilon,\delta;  p ) = \{x\in S(\cM,\tau): pxp \in V(\varepsilon,\delta)\}. $$
It is clear that the local measure topology is weaker than the measure topology~\cite{DP2,DPS}.
If $\{x_\alpha\}\subset S(\cM,\tau)$ is a net and if $x_\alpha \rightarrow_\alpha x \in S(\cM,\tau)$ in local measure topology, then $x_\alpha y\rightarrow xy $ and $yx _\alpha \rightarrow yx $ in the local measure topology for all $y \in S(\cM,\tau)$ \cite{DP2,DPS}.\label{lmtc}

\subsection{Symmetrically $\Delta$-normed  spaces of $\tau$-measurable operators}
For convenience of the reader, we recall the definition of $\Delta$-norms.
Let $\Omega$ be a linear space over the field $\mathbb{C}$.
A function $\left\|\cdot\right\|$ from $\Omega$ to $\mathbb{R}$ is a $\Delta$-norm, if for all $x,y \in \Omega$ the following properties hold:
\begin{align}
\left\|x\right\| \geqslant 0 , ~\left\|x\right\| = 0 \Leftrightarrow x=0 ;\\
\label{def:1}\left\|\alpha x\right\| \leqslant \left\|x\right\|, ~\forall~  |\alpha| \le 1 ;\\
\label{de:d:3}\lim _{\alpha \rightarrow 0}\left\|\alpha x\right\| = 0;\\
\label{def:d:4}\left\|x+y \right\| \le C_\Omega \cdot (\left\|x\right\|+\left\|y\right\|)
\end{align}
 for a constant $C_\Omega\geq 1$ independent of $x,y$.
The couple $(\Omega, \left\|\cdot\right\|)$ is called a \emph{$\Delta$-normed} space.
We note that the definition of a $\Delta$-norm given above is the same with that given in \cite{KPR}.
It is well-known that every $\Delta$-normed space $(\Omega,\left\|\cdot\right\|)$ is metrizable  and conversely every metrizable space can be equipped with a $\Delta$-norm  \cite{KPR}.
Note that properties $(2)$ and $(4)$ of a $\Delta$-norm imply that for any $\alpha\in\mathbb{C}$, there exists a constant $M$ such that $\|\alpha x\|\leq M\|x\|,\, x\in \Omega$, in particular, if $\|x_n\|\to 0, \{x_n\}_{n=1}^\infty\subset \Omega$, then $\|\alpha x_n\|\to 0$.
In particular, when $C_\Omega=1$, $\Omega$ is called an \emph{$F$-normed} space \cite{KPR}.
%

Let $E(0,\infty)$  be a space of real-valued Lebesgue measurable
functions on  $(0,\infty)$ (with identification
$m$-a.e.), equipped with a $\Delta$-norm $\left\|\cdot\right\|_E$.
The space $E(0,\infty)$ is said to be {\it
absolutely solid} if $x\in E(0,\infty)$ and $|y|\leq |x|$, $y\in S(0,\infty)$
implies that $y\in E(0,\infty)$ and $\|y\|_E\leq\|x\|_E.$
An absolutely solid space $E(0,\infty)\subseteq S(0,\infty)$ is said to be {\it
symmetric} if for every $x\in E(0,\infty)$ and every $y\in S(0,\infty)$,
 the assumption
$\mu(y)=\mu(x)$ implies that $y\in E(0,\infty)$ and $\|y\|_E=\|x\|_E$ (see e.g.
\cite{KPS,Astashkin}).

We now come to the definition of the main object of this paper.
\begin{definition}\label{opspace}
Let $\cM $ be a semifinite von Neumann  algebra  equipped
with a faithful normal semi-finite trace $\tau$.
Let $\mathcal{E}$ be a linear subset in $S({\mathcal{M}, \tau})$
equipped with a $\Delta$-norm $\|\cdot\|_{\mathcal{E}}$.
We say that
$\mathcal{E}$ is a \textit{symmetrically $\Delta$-normed  space}  if
for $x \in
\mathcal{E}$, $y\in S({\mathcal{M}, \tau})$ and  $\mu(y)\leq \mu(x)$ imply that $y\in \mathcal{E}$ and
$\|y\|_\mathcal{E}\leq \|x\|_\mathcal{E}$.
\end{definition}
Let $E(\cM,\tau)$ be a symmetrically $\Delta$-normed space.
Since $\mu(axb)\le \mu(\|a\|_\infty \|b\|_\infty x) $, $a,b\in \cM$, $x\in E(\cM,\tau)$, it follows that
every symmetrically $\Delta$-normed space is an $\cM$-bimodule.
It is well-known that any symmetrically normed space $E(\cM,\tau)$ is a normed $\cM$-bimodule (see e.g.  \cite{DP2} and \cite{DPS}). 
However, one should note that a symmetrically  $\Delta$-normed space $E(\cM,\tau)$ does not necessarily satisfy
$\|axb\|_E \le \|a\|_\infty \|b\|_\infty \|x\|_E, ~a, b \in \cM,~ x\in E(\cM,\tau)$.
 For every  $x\in E(\cM,\tau)$ and $\{y_n\in \cM\}$ with $\|y_n\|_\infty \rightarrow 0$,
  $\mu(xy_n),\mu(y_n x)\le \|y_n\|_\infty\mu(x ) = \mu(\|y_n\|_\infty x )$ implies that
  $ xy_n ,y_n x  \in E(\cM,\tau) $ and
 \begin{align}\label{ineq:to0}
\left\|xy_n\right\|_E, \left\|y_n x \right\|_E\le   \Big\|\|y_n \|_\infty x \Big\|_E\stackrel{\eqref{de:d:3}}{\rightarrow} 0,
 \end{align}
Definition \ref{opspace} together with \cite[Lemma 2.3.12 and Corollary 2.3.17]{LSZ} implies that
\begin{align}\label{eq:*|}
\left\|x\right\|_E = \left\|x ^*\right\|_E = \left\||x|\right\|_E,~x\in E.
\end{align}


There exists a strong connection between symmetric function spaces and
operator spaces exposed in \cite{Kalton_S} (see also \cite{Sukochev, HLS2017, Bik1992}).
The operator space $E(\cM,\tau)$ defined by
\begin{equation*}
E(\mathcal{M},\tau):=\{x \in S(\mathcal{M},\tau):\ \mu(x )\in E(0,\infty)\},
\ \left\|x \right\|_{E(\mathcal{M},\tau)}:=\left\|\mu(x )\right\|_E
\end{equation*}
 is a complete symmetrically $\Delta$-normed space  whenever $(E(0,\infty),\left\|\cdot\right\|_E)$ is    a complete  symmetrically $\Delta$-normed function space on $(0,\infty)$  \cite{HLS2017} (see also \cite{Kalton_S,Sukochev}).

For  a  given   symmetrically $\Delta$-normed space $E(\cM,\tau)$,
we denote $\cP(E) : = E(\cM,\tau)\cap \cP(\cM)$.
If $p,q\in \cP(E)$, then $p\vee q \in \cP(E)$ (see e.g.  \cite[Chapter IV,  Lemma 1.4]{DPS} or \cite[Lemma 4]{DP2}).
The \emph{carrier projection} $c_E\in \cM$ of an $\cM$-bimodule $E$ is defined by setting
$$c_E := \vee \{p:p\in \cP(E)\}.$$
 It is clear that $c_E$ is in the center of $\cM$ \cite{DP2}.
The following proposition is an extension of \cite[Chapter IV, Lemma 4.4]{DPS}.
\begin{proposition}\label{prop:car}
If the carrier projection $c_E$ of a symmetrically $\Delta$-normed space $E(
\cM,\tau)$ is equal to ${\bf 1}$, then
$$\{p\in \cP(\cM):\tau(p)<\infty\}\subset \cP(E)$$
and hence, $\cF(\tau)\subset E(\cM,\tau)$.
\end{proposition}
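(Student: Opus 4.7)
The plan is to exploit the directed structure of $\cP(E)$ together with the normality of $\tau$ to produce projections in $\cP(E)$ of arbitrarily large trace (up to $\tau(\mathbf{1})$), and then dominate any $\tau$-finite projection by one in $\cP(E)$ via singular values.

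First, I would observe that the cited fact $p\vee q\in\cP(E)$ for $p,q\in\cP(E)$ makes $\cP(E)$ upward directed under $\leq$, and by the definition of the carrier projection its supremum in the projection lattice of $\cM$ is $c_E=\mathbf{1}$. Passing to the net of finite joins of elements of $\cP(E)$, we obtain a net $\{p_\alpha\}\subset\cP(E)$ with $p_\alpha\uparrow\mathbf{1}$ in the strong operator topology; by normality of $\tau$,
\[
\sup\{\tau(p):p\in\cP(E)\}=\tau(\mathbf{1})\in(0,\infty].
\]

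Next, fix $q\in\cP(\cM)$ with $\tau(q)=t<\infty$ and split into two cases. In the generic case $t<\tau(\mathbf{1})$ (automatic when $\tau(\mathbf{1})=\infty$), the above supremum lets us pick $p\in\cP(E)$ with $\tau(p)\geq t$. Since $\mu(q)=\chi_{[0,t)}\leq\chi_{[0,\tau(p))}=\mu(p)$, Definition \ref{opspace} gives $q\in E$, i.e.\ $q\in\cP(E)$. In the boundary case $t=\tau(\mathbf{1})<\infty$, faithfulness of $\tau$ forces $q=\mathbf{1}$, and here a simple trick is needed: choose $p\in\cP(E)$ with $\tau(p)>\tau(\mathbf{1})/2$, so $\tau(\mathbf{1}-p)<\tau(p)$; the previous case applied to $\mathbf{1}-p$ puts it in $\cP(E)$, and then linearity of $E$ gives $\mathbf{1}=p+(\mathbf{1}-p)\in E$. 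This decomposition step is the only subtle point in the proof, since absolute solidity alone does not deliver $\mathbf{1}\in E$ in the absence of order continuity.

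Finally, the inclusion $\cF(\tau)\subset E(\cM,\tau)$ is essentially free: for $x\in\cF(\tau)$ we have $\tau(r(x))<\infty$, so $r(x)\in\cP(E)$ by what was just proved; invoking the $\cM$-bimodule property of $E$ (noted right after Definition \ref{opspace}), we conclude $x=x\cdot r(x)\in \cM\cdot E\subset E$. The main (only) obstacle is the borderline finite-trace case handled above; everything else reduces to an application of the symmetric $\Delta$-norm property to a pointwise inequality between characteristic functions.
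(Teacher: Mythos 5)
Your proposal follows the paper's own argument essentially verbatim: upward directedness of $\cP(E)$ plus normality of $\tau$ gives $\sup\{\tau(p):p\in\cP(E)\}=\tau(\mathbf{1})$, any $\tau$-finite $q$ is absorbed via $\mu(q)\le\mu(p)$, the boundary case $\tau(\mathbf{1})<\infty$ is handled by the same halving trick $\mathbf{1}=p+p^{\perp}$, and the final inclusion differs only cosmetically (the paper bounds $\mu(x)\le\mu(\|x\|_\infty s(x))$ and uses linearity rather than the bimodule property). One small correction: $x=x\cdot r(x)$ is false in general --- the correct identities are $x=r(x)\cdot x$ and $x=x\cdot s(x)$ --- but since $\tau(r(x))=\tau(s(x))<\infty$ for $x\in\cF(\tau)$, either projection lies in $\cP(E)$ and your argument goes through unchanged.
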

\begin{proof}
By \cite[Lemma 4 (iii)]{DP2}, the set $\cP(E)$ is upwards directed and the normality of trace $\tau$ implies that
\begin{align} \label{eq:sup1}
\sup\{ \tau(p) : P\in \cP(E)\} =\tau({\bf 1}).
\end{align}

Suppose first that $\tau({\bf 1})=\infty$.
If $q\in \cP(\cM)$ satisfies $\tau(q)<\infty$, then \eqref{eq:sup1} implies that $\tau(q)\le \tau(p)$ for some $p\in \cP(E)$ and hence, $q\in \cP(E)$.
This proves the assertion in the case that $\tau({\bf 1})=\infty$.

Assume that $\tau({\bf 1})<\infty$.
It suffices to show that ${\bf 1} \in E(\cM,\tau)$.
It follows from \eqref{eq:sup1}  that there exists $p\in \cP(E)$ such that $\tau(p)\ge \frac12 \tau({\bf 1})$.
Since $\tau(p^\perp)\le \frac12 \tau({\bf 1}) \le \tau(p)$, it follows that also $p^\perp  \in \cP(E)$ and hence, ${\bf 1}\in \cP(E)$.

If $x\in \cF(\tau)$, then the support projection $p=s(x)$ of $x$ satisfies $\tau(p)<\infty$ and $|x| \le \left\|x\right\|_\infty p$.
That is, $\mu(x)=\mu(|x|)\le \mu(\left\|x\right\|_\infty p )$.
Since $p\in \cP(E)$ and $E(\cM,\tau)$ is a linear space, it follows Definition \ref{opspace} that $x\in E(\cM,\tau)$.
\end{proof}
It is often assumed that the carrier projection $c_E$ is equal to ${\bf 1}$.
Indeed, for any  symmetrically $\Delta$-normed function space  $E(0,\infty)$ on the interval $(0,\infty)$, the carrier projection of the corresponding operator space $E(\cM,\tau)$ is always ${\bf 1}$ (see e.g.
\cite{HS}, see also \cite{HM,Astashkin}).
In the present paper, we always assume that the carrier projection of a symmetrically $\Delta$-normed space is equal to ${\bf 1}$.

\subsection{Submajorisation}

If $x,y\in S(\cM,\tau)$, then $x$ is said to be submajorised by $y$, denoted by $x \prec\prec y$ (Hardy-Littlewood-Polya submajorisation), if
\begin{align*}
\int_{0}^{t} \mu(s;x) ds \le \int_{0}^{t} \mu(s;y) ds
\end{align*}
for all $t\ge 0$ (see e.g. \cite{LSZ,DPS,DP2}).
The algebra
$$ \cL_{\log}(\cM,\tau):=\{x \in S(\cM,\tau): \|x \|_{\log}:=\int_0^\infty \log(1+\mu(t;x ))dt < \infty\}$$
 of log-integrable operators introduced in
\cite{DSZ2015} is a complete symmetrically $F$-normed space.
Denote  $\log_+ t :=\max \{\log t, 0\}$.
For $ x,y \in S(\cM,\tau)$ with $\log_+ \mu(x),\log_+ \mu(y)\in L_1(0,\infty)+L_\infty(0,\infty)$, $x$ is said to be logarithmically submajorised by $y$ \cite{Hiai,DDSZ}, denoted by $x\prec\prec_{\log} y$, if
\begin{align*}
\int_{0}^{t} \log(\mu(s;x) )ds \le \int_{0}^{t}\log( \mu(s;y)) ds, ~t\ge 0.
\end{align*}
In particular,   we have
$
\mu(xy)\prec\prec_{\log} \mu(x)\mu(y)$ (see \cite[Theorem 1.18]{Hiai} or \cite{DDSZ}).

For the sake of convenience, we denote $\cM^\Delta:= (\cL_{\log}(\cM,\tau)+\cM)\cap S_0(\cM,\tau)$.
In particular, for $x\in S(\cM,\tau)$, $x\in \cM^\Delta$ if and only if  $\log_+\mu(x ) \in L_1(0,\infty)+L_\infty(0,\infty)$ and $\mu(\infty;x) =0$.

A (Banach) symmetric norm $\left\|\cdot\right\|_E$ on $E(\cM,\tau)$ is called strictly $K$-monotone if and only if $\|x\|_E < \|y\|_E$ whenever $x,y\in E(\cM,\tau)$, $x\prec \prec y$ and $\mu(x)\ne \mu(y)$. 
It is natural to introduce the following notion when considering symmetrically $\Delta$-normed (or quasi-normed) operator space.
A symmetric $\Delta$-norm on $E(\cM,\tau)\subset (\cL_{\log}+\cL_\infty )(\cM,\tau)$ is called a \emph{strictly log-monotone} (SLM) $\Delta$-norm if   $\|x\|_E < \|y\|_E$ whenever $x,y\in E(\cM,\tau)$ satisfies $\mu(x) \prec \prec _{\log}\mu(y)$ and $\mu(x)\ne \mu(y)$ ($\left\|\cdot\right\|_E$ is called \emph{log-monotone} if $\|X\|_E\le \|y\|_E$ whenever $\mu(x) \prec \prec _{\log}\mu(y)$).
Indeed, the usual $L_p$-norm $\left\|\cdot\right\|_p$, $0<p<\infty$, are SLM $\Delta$-norms.
In the last section, we show that   noncommutative Lorentz spaces associated with  $\cM$ are SLM quasi-normed.
It is clear that $E(\cM,\tau)$ has  SLM $\Delta$-norm whenever $\|\cdot\|_E$ is an SLM $\Delta$-norm on $E(0,\infty)$.

We denote the decreasing rearrangement $f^*$ of a measurable function $f$   by
$$f^*(t)=\inf\{s\geq 0:\ m(\{f\geq s\})\leq t\}.$$
The following result is well-known, which is essentially  an inequality of Hardy, Littlewood and Polya (see  \cite[Chapter 1, Theorem D.2]{MOA} for  results which imply the following, or
 \cite[Lemma]{Weyl} and \cite[Chapter II, Lemma 3.4]{GK1} for the sequence version).

\begin{proposition}\label{Prop:lefunction2}
Assume that  $f=f^* $ and $g=g^* $ are measurable function  integrable  on $(0,s)$, $s>0$.
If $\int_0^b  f(t)  dt \le  \int_0^b g(t  )dt $ for every $0 <b\le s $,
then for every increasing  continuous convex    function $\varphi$ on $\mathbb{R}$, we have $\int_0^b \varphi(f(t)) dt \le  \int_0^b \varphi(g(t ))dt $ for every $0 <b\le s $.
\end{proposition}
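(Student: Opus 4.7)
My approach is to reduce the general case to the elementary convex \lq\lq hockey-stick\rq\rq\ functions $\psi_c(u):=(u-c)_+$ via a standard integral representation. Every continuous convex $\varphi$ on $\mathbb{R}$ has a nondecreasing right derivative $\varphi'_+$ whose Lebesgue--Stieltjes measure $d\varphi'_+ = \mu \ge 0$ yields
\[
\varphi(u)=\alpha+\beta u+\int_{\mathbb{R}}(u-c)_+\,d\mu(c),\qquad u\in\mathbb{R},
\]
for suitable $\alpha,\beta\in\mathbb{R}$. The conclusion becomes vacuous without some form of monotonicity on $\varphi$ (the choice $\varphi(u)=-u$ would contradict the hypothesis), so I tacitly work with $\beta\ge 0$ and $\mathrm{supp}(\mu)\subset[0,\infty)$, the natural setting in which $f=f^*,g=g^*\ge 0$. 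By Fubini--Tonelli, applied after truncating $\mu$ to $[0,N]$ and then letting $N\to\infty$, it suffices to establish the asserted inequality separately for $u\mapsto \alpha+\beta u$ and for each individual $\psi_c$ with $c\ge 0$.

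The affine case is immediate from the hypothesis, applied at $b$. For $\psi_c$ I exploit that $f=f^*$ is nonincreasing and nonnegative: the superlevel set $\{t\in(0,b):f(t)>c\}$ is an initial interval $(0,b_f)$ with $b_f\in[0,b]$, and since $f-c$ has constant sign on each side of $b_f$, the function $b'\mapsto\int_0^{b'}f(t)\,dt-cb'$ attains its maximum over $[0,b]$ precisely at $b'=b_f$; thus
\[
\int_0^b \psi_c(f(t))\,dt=\int_0^{b_f} f(t)\,dt-cb_f=\sup_{0\le b'\le b}\Bigl(\int_0^{b'}f(t)\,dt-cb'\Bigr),
\]
with the identical identity for $g$. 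Since by hypothesis $\int_0^{b'}f\le\int_0^{b'}g$ for \emph{every} $b'\in(0,b]$, subtracting $cb'$ and taking the supremum over $b'$ gives $\int_0^b\psi_c(f)\le\int_0^b\psi_c(g)$, which is the desired inequality for the hockey-stick.

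The only real subtlety is making the integral representation of $\varphi$ interact cleanly with the outer integral $\int_0^b$; this is why I approximate $\mu$ by $\mu|_{[0,N]}$ before applying Fubini--Tonelli, then pass to the limit by monotone convergence using the integrability of $f,g$ on $(0,s)$. Otherwise the argument is a direct Hardy--Littlewood--P\'olya-style manipulation, parallel to Marshall--Olkin--Arnold \cite[Chapter~1, Theorem~D.2]{MOA}, and the hockey-stick computation above is the only step specific to decreasing rearrangements.
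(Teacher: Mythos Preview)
The paper does not prove this proposition; it simply records it as well known and cites \cite{PPT}, \cite{MOA}, \cite{Weyl}, \cite{GK1}. So there is no ``paper's own proof'' to compare against, and your argument must be judged on its own terms.

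Your observation that the statement is false as written is correct and worth flagging: with $f\equiv 0$, $g\equiv 1$ on $(0,1)$ and $\varphi(u)=-u$ one gets $0\not\le -b$. Since $f=f^*$ forces $f\ge 0$ (the paper's rearrangement $f^*$ takes values in $[0,\infty)$), only the behaviour of $\varphi$ on $[0,\infty)$ matters, and the correct hypothesis is that $\varphi$ be convex and nondecreasing there, i.e.\ $\varphi'_+(0)\ge 0$. This is exactly the classical submajorisation result in the cited references, and it is the only form the paper actually uses: in Corollary~\ref{prop:e} one takes $\varphi(t)=e^{pt}$, and in the proof following \eqref{major+1} one takes $\varphi(t)=\log(1+e^t)$, both convex and increasing.

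Under that corrected hypothesis your argument is sound. The representation $\varphi(u)=\alpha+\beta u+\int_{[0,\infty)}(u-c)_+\,d\mu(c)$ for $u\ge 0$, with $\alpha=\varphi(0)$, $\beta=\varphi'_+(0)\ge 0$ and $\mu=d\varphi'_+|_{[0,\infty)}$, is valid since $\mu([0,u])=\varphi'_+(u)-\varphi'_+(0)<\infty$. The hockey-stick identity
\[
\int_0^b (f(t)-c)_+\,dt=\max_{0\le b'\le b}\Bigl(\int_0^{b'}f(t)\,dt-cb'\Bigr)
\]
follows exactly as you say from monotonicity of $f$, and comparing the right-hand sides for $f$ and $g$ term by term gives the inequality for each $\psi_c$. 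Truncating $\mu$ to $[0,N]$, applying Tonelli (the integrand $(f(t)-c)_+$ is nonnegative), and passing to the limit by monotone convergence is the right way to assemble the pieces; note that $\int_0^b\varphi(g)$ may be $+\infty$, in which case the inequality is trivial, so no finiteness assumption on $\varphi(g)$ is needed. Your proof is the standard one, consistent with the approach in \cite[Chapter~1, Theorem~D.2]{MOA}.
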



The following corollary is an easy consequence of Proposition \ref{Prop:lefunction2}.
\begin{corollary}\label{prop:e}
Let $x,y\in \cM^\Delta$.
If $\mu(x) \prec\prec_{\log} \mu(y)$, then $\mu(x)^p \prec\prec \mu(y)^p $, $0<p<\infty$.
\end{corollary}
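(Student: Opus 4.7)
The plan is to apply Proposition \ref{Prop:lefunction2} directly, using the logarithm/exponential change of coordinates. Set $f(s):=\log\mu(s;x)$ and $g(s):=\log\mu(s;y)$. Since $\mu(x),\mu(y)$ are decreasing and $x,y\in\cM^\Delta$ (so $\log_+\mu(x),\log_+\mu(y)\in L_1(0,\infty)+L_\infty(0,\infty)$), the functions $f$ and $g$ are decreasing and locally integrable on any interval where they are finite. The hypothesis $\mu(x)\prec\prec_{\log}\mu(y)$ is precisely
$$\int_0^t f(s)\,ds\le\int_0^t g(s)\,ds,\qquad t>0.$$
The continuous convex function $\varphi(u):=e^{pu}$ satisfies $\varphi(f)=\mu(x)^p$ and $\varphi(g)=\mu(y)^p$, so Proposition \ref{Prop:lefunction2} applied to $f,g,\varphi$ would yield $\int_0^t\mu(s;x)^p\,ds\le\int_0^t\mu(s;y)^p\,ds$ for every $t>0$, which is the desired submajorisation.

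The only point requiring care is the hypothesis $f=f^\ast,\ g=g^\ast$ in Proposition \ref{Prop:lefunction2}, which forces non-negativity, whereas $\log\mu$ need not be non-negative (indeed it equals $-\infty$ wherever $\mu$ vanishes). I would dispose of this in two steps. First, fix $t>0$ with $\mu(t;x)>0$ and $\mu(t;y)>0$; then $f,g$ are bounded below on $(0,t)$, and after a common additive shift $f\mapsto f+C$, $g\mapsto g+C$ one can arrange $f,g\ge 0$ on $(0,t)$ so that $f=f^\ast$ and $g=g^\ast$ there. Such a shift preserves the integral inequality and changes the conclusion only by a harmless overall factor $e^{pC}$, because one replaces $\varphi(u)=e^{pu}$ by $u\mapsto e^{p(u-C)}=e^{-pC}\varphi(u)$ before reading off the inequality for $\mu(x)^p$ and $\mu(y)^p$. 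This proves $\int_0^t\mu(x)^p\le\int_0^t\mu(y)^p$ on this range of $t$.

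Second, I would extend to all $t>0$ by a truncation/limit argument. Let $s_x:=\sup\{s:\mu(s;x)>0\}\in(0,\infty]$ and similarly $s_y$. For $t<\min(s_x,s_y)$ the previous step applies. For $t\ge s_x$ one has $\int_0^t\mu(x)^p=\int_0^{s_x}\mu(x)^p$, which is the monotone limit of $\int_0^s\mu(x)^p$ as $s\uparrow s_x$ and is therefore bounded by $\lim_{s\uparrow s_x}\int_0^s\mu(y)^p\le\int_0^t\mu(y)^p$. The case $s_y\le t<s_x$ cannot occur, because if $\mu(y)$ vanishes on $[s_y,s_y+\varepsilon)$ while $\mu(x)>0$ on this set, then $\int_0^{s_y+\varepsilon}\log\mu(x)>-\infty=\int_0^{s_y+\varepsilon}\log\mu(y)$, contradicting the hypothesis. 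Combining the two ranges yields $\mu(x)^p\prec\prec\mu(y)^p$.

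No serious obstacle is expected: the corollary is essentially a transliteration of Proposition \ref{Prop:lefunction2} under the exponential map. The only real work is verifying that the convexity-based inequality extends across the points where $\log\mu$ may be $-\infty$, which is the elementary limiting argument above.
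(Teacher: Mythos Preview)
Your approach is correct and is exactly the one the paper intends: apply Proposition~\ref{Prop:lefunction2} to $f=\log\mu(x)$, $g=\log\mu(y)$ with the convex function $\varphi(u)=e^{pu}$. The paper itself gives no further detail, simply recording the corollary as an immediate consequence.

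One remark: your concern that $f=f^*$ forces non-negativity is probably an over-reading of the hypothesis. Proposition~\ref{Prop:lefunction2} is stated for convex $\varphi$ on all of $\mathbb{R}$, and the paper itself applies it (in the proof of the next proposition) to functions of the form $\log(\mu(x)\chi_{(0,t)})$, which are certainly not non-negative. The intended meaning of $f=f^*$ there is simply that $f$ is decreasing and integrable on $(0,s)$; the references cited (Hardy--Littlewood--P\'olya, Marshall--Olkin--Arnold) do not require non-negativity. That said, your additive-shift workaround is correct and costs nothing, and your limiting argument for the region where $\mu$ vanishes (in particular the observation that $s_y<s_x$ would force $\int_0^t\log\mu(y)=-\infty<\int_0^t\log\mu(x)$, contradicting the hypothesis) is a clean way to tie off the boundary cases.
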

\begin{proposition}
$\left\|\cdot \right\|_{\log}$  is an SLM symmetric $F$-norm on $ \cL_{\log}(\cM,\tau)$.
\end{proposition}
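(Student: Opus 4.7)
The $F$-norm properties for $\|\cdot\|_{\log}$ on $\cL_{\log}(\cM,\tau)$ are established in \cite{DSZ2015}, and symmetry is immediate since $\|x\|_{\log}$ depends on $x$ only through $\mu(x)$. The substantive content is therefore the strict log-monotonicity: if $x,y\in\cL_{\log}(\cM,\tau)$ satisfy $\mu(x)\prec\prec_{\log}\mu(y)$ and $\mu(x)\ne\mu(y)$, then $\|x\|_{\log}<\|y\|_{\log}$. My plan is to reduce this to a strict Hardy--Littlewood--Polya inequality via a logarithmic change of variables.

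Put $f(t):=\log\mu(t;x)$ and $g(t):=\log\mu(t;y)$, with the convention $\log 0=-\infty$. Both are non-increasing on $(0,\infty)$, and by hypothesis $\int_0^tf(s)\,ds\le\int_0^tg(s)\,ds$ for every $t>0$, while $f\ne g$ on a set of positive measure. Setting $\varphi(u):=\log(1+e^u)$, which is smooth, strictly increasing, and strictly convex on $\mathbb{R}$ (since $\varphi''(u)=e^u/(1+e^u)^2>0$), we have $\|x\|_{\log}=\int_0^\infty\varphi(f(s))\,ds$ and similarly for $y$. Applying Proposition \ref{Prop:lefunction2} to the decreasing functions $f,g$ on each interval $(0,s)$ and letting $s\to\infty$ already delivers the non-strict bound $\|x\|_{\log}\le\|y\|_{\log}$; the crux is to upgrade this to a strict inequality.

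For strictness, I would introduce the distribution functions $N_f(u):=|\{f>u\}|$, $N_g(u):=|\{g>u\}|$ and put $H(u):=\int_u^\infty\bigl(N_g(v)-N_f(v)\bigr)\,dv$. By the layer-cake identity $\int_0^\infty(f-u)_+\,ds=\int_u^\infty N_f(v)\,dv$ and its analogue for $g$, the submajorisation hypothesis is equivalent to $H(u)\ge 0$ for every $u\in\mathbb{R}$. A Fubini swap in $\int_0^\infty[\varphi(g)-\varphi(f)]\,ds$, followed by integration by parts, then yields the key identity
\[
\int_0^\infty\bigl[\varphi(g(s))-\varphi(f(s))\bigr]\,ds=\int_\mathbb{R}\varphi''(u)\,H(u)\,du,
\]
whose right-hand side is non-negative because $\varphi''>0$ and $H\ge 0$. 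If the left-hand side vanished, we would conclude $\varphi''H\equiv 0$ a.e., hence $H\equiv 0$, hence $N_f=N_g$ a.e., and finally $f=g$ a.e. (as both are decreasing rearrangements of themselves), contradicting $\mu(x)\ne\mu(y)$. Thus $\|x\|_{\log}<\|y\|_{\log}$.

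The main technical difficulty is justifying the integration-by-parts step: $N_f$, $N_g$ can grow to $+\infty$ as $u\to -\infty$ whenever $\mu(x)$ or $\mu(y)$ is supported on a set of infinite Lebesgue measure, so one must verify $\varphi'(u)H(u)\to 0$ at $-\infty$. This is controlled by the exponential decay $\varphi'(u)\sim e^u$ together with the layer-cake representation $\|x\|_{\log}=\int_\mathbb{R}\varphi'(u)N_f(u)\,du<\infty$ (and similarly for $y$), which, combined with the monotonicity of $N_f$, forces $\varphi'(u)N_f(u)\to 0$ as $u\to-\infty$; an elementary estimate then yields the required decay of $\varphi'(u)H(u)$ and validates the identity above.
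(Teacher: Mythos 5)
Your proof is correct, but the mechanism you use for strictness is genuinely different from the paper's. The paper's proof also reduces to Proposition \ref{Prop:lefunction2} with the same function $\varphi(t)=\log(1+e^t)$, but it obtains strictness by inserting the geometric mean: from $\mu(x)\prec\prec_{\log}\mu(y)$ it deduces $\mu(x)\prec\prec_{\log}\mu(y)^{1/2}\mu(x)^{1/2}\prec\prec_{\log}\mu(y)$, sandwiches $\int_0^\infty\log\bigl(1+\mu(y)^{1/2}\mu(x)^{1/2}\bigr)$ between $\|x\|_{\log}$ and $\|y\|_{\log}$, and then uses the pointwise inequality $(1+\sqrt{ab}\,)^2\le(1+a)(1+b)$, whose equality case is $a=b$; equality of the norms forces the nonnegative integrand $\log(1+\mu(x))+\log(1+\mu(y))-2\log(1+\mu(x)^{1/2}\mu(y)^{1/2})$ to vanish a.e., whence $\mu(x)=\mu(y)$ by right-continuity. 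You instead encode the convexity gap through the second derivative, via $\|y\|_{\log}-\|x\|_{\log}=\int_{\mathbb{R}}\varphi''(u)H(u)\,du$ with $\varphi''>0$ and $H\ge0$. Both arguments ultimately force a pointwise nonnegative quantity with zero integral to vanish; the paper's route is shorter and avoids all boundary-term analysis, while yours is more quantitative and applies verbatim to any $F$-norm of the form $\int\Phi(\mu(\cdot))$ with $\Phi\circ\exp$ increasing and strictly convex. The step you flag as the main difficulty does go through, and slightly more easily than your sketch suggests: one does not need $\varphi'(u)N_f(u)\to0$ itself, only that $\varphi'(v)\ge e^v/2$ for $v\le0$, so that $\|y\|_{\log}<\infty$ yields $\int_{-\infty}^0 e^vN_g(v)\,dv<\infty$; then $e^a\int_a^{a/2}N_g(v)\,dv\le\int_{-\infty}^{a/2}e^vN_g(v)\,dv\to0$ and $e^a\int_{a/2}^\infty N_g(v)\,dv\le e^{a/2}\bigl(\int_{-\infty}^0e^vN_g(v)\,dv+\int_0^\infty N_g(v)\,dv\bigr)\to0$ as $a\to-\infty$, and likewise for $N_f$, which kills the boundary term $\varphi'(a)H(a)$. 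With that detail supplied, your argument is complete.
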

\begin{proof}
Since $\|\cdot\|_{\log}$ is a symmetric $F$-norm on $ \cL_{\log}(\cM,\tau)$ \cite{DSZ2015}, it suffices to prove the SLM property.
Assume that  $x,y\in \cL_{\log}(\cM,\tau)$ with  $ \mu(x )\prec\prec_{\log}   \mu(y) $.
Without loss of generality, we may assume that $\mu(t;x)>0$ for every $t>0$ (therefore, $\mu(t;y)>0$, $t>0$).
Note that $\log(\mu(x) \chi_{(0,t)}), \log(\mu(y)\chi_{(0,t)})  $ are integrable functions on $(0,t)$.
Since $\mu(x)\prec \prec_{\log} \mu(y)$, it follows that
\begin{align*}
\mu(x)^\frac12 \chi_{(0,t)}\prec\prec_{\log} \mu(y)^\frac12  \chi_{(0,t)},
\end{align*}
and therefore,
$$
\mu(x)  \chi_{(0,t)}   \prec\prec_{\log} \mu(y)^\frac12 \mu(x)^\frac12  \chi_{(0,t)} \prec\prec_{\log}  \mu(y) \chi_{(0,t)}  .
$$
Taking a  continuous convex  function $\varphi(t):=\log(1+e^t)$, $t\in \mathbb{R}$,
by Proposition \ref{Prop:lefunction2},
we obtain that
\begin{align}\label{major+1}
\left(\mu(x)+1  \right)\chi_{(0,t)} \prec\prec_{\log } \left( \mu(y)^\frac12 \mu(x)^\frac12   +1 \right)\chi_{(0,t)} \prec\prec_{\log} \left(   \mu(y) +1 \right)\chi_{(0,t)}
\end{align}
for every $t>0$, which implies that $\|\cdot\|_{\log}$ is  log-monotone.
In addition, if  $\|x\|_{\log}=\|y\|_{\log}$, then \eqref{major+1} implies that
\begin{align}\label{log=}
&\qquad 2 \int_0^\infty \log(\mu(t;y)^\frac12 \mu(t;x)^\frac12 +1 )dt \nonumber\\
&=\int_0^\infty \log(\mu(t;x)+1 ) +\log(\mu(t;y) +1 )dt .
\end{align}
Since $(\mu(t;y)^\frac12 \mu(t;x)^\frac12 +1 )^2\le (\mu(t;x)+1 )(\mu(t;y) +1 ) $ and the equality holds true only when $\mu(t;x)=\mu(t;y)$,
it follows from \eqref{log=} that
$\mu(x)=\mu(y)$, a.e..
By the right-continuity of $\mu(x)$ and $\mu(y)$, we obtain that $\mu(x)=\mu(y)$.
Therefore, $\left\|\cdot \right\|_{\log}$ is an SLM $F$-norm on $ \cL_{\log}(\cM,\tau)$.
\end{proof}

\subsection{Order continuous $\Delta$-norms}

In this subsection, we introduce the notion of order continuous $\Delta$-norms.
For the introduction of order continuous norms, we refer to \cite{DDP2,DP2,DPS}.

If $E(\cM,\tau)\subset S(\cM,\tau)$ is a symmetrically $\Delta$-normed operator space, then the $\Delta$-norm $\|\cdot\|_E$ is called order continuous if $\|x_\alpha \|_E \rightarrow_\alpha 0$  whenever $\{x_\alpha\}$ is a downwards directed net in $E(\cM,\tau)^+$ satisfying $x_\alpha \downarrow 0$.
See e.g. \cite{DP2,DPS,KM} for examples of order continuous $\Delta$-norms.

The set of all self-adjoint elements of $E(\cM,\tau)$ is denoted by $E_h(\cM,\tau)$.
By \cite[Lemma 2.4]{HLS2017} and  \cite[Chapter II, Proposition 6.1]{DPS} (see also \cite[Corollary 4.3]{HS} and \cite[Proposition 2]{DP2}), we obtain  the following result immediately.
\begin{lemma}\label{cor:cone}
Let $E(\cM,\tau)$ is a symmetrically $\Delta$-normed space.
If $\{x_\lambda\}$ is an increasing  net in $E_h(\cM,\tau)$ and $x\in E_h(\cM,\tau)$ with $\|x_\lambda -x\|_E\rightarrow 0$, then $x_\lambda \uparrow x$.
\end{lemma}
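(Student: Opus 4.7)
The plan is to reduce the assertion to the single fact that the positive cone of $E_h(\cM,\tau)$ is closed with respect to the $\Delta$-norm topology. Recall that $x_\lambda\uparrow x$ means the net is increasing and $x$ is its least upper bound in $E_h(\cM,\tau)$. Since the monotonicity of the net is given by hypothesis, only the two pieces of the supremum condition need to be checked, and both follow by the same mechanism.

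First I would show that $x_{\lambda_0}\le x$ for each $\lambda_0$. Fix $\lambda_0$, and for $\lambda\ge\lambda_0$ set $y_\lambda := x_\lambda - x_{\lambda_0}$. Then $y_\lambda\ge 0$ for all $\lambda\ge\lambda_0$, and $\|y_\lambda - (x - x_{\lambda_0})\|_E = \|x_\lambda - x\|_E\to 0$; closedness of the positive cone then forces $x - x_{\lambda_0}\ge 0$. Since $\lambda_0$ was arbitrary, $x$ is an upper bound for $\{x_\lambda\}$. Next, if $z\in E_h(\cM,\tau)$ is any other upper bound, then $z - x_\lambda \ge 0$ for every $\lambda$ and $\|(z - x_\lambda) - (z - x)\|_E = \|x - x_\lambda\|_E \to 0$, so the same closedness gives $z\ge x$. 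This identifies $x$ as the least upper bound and establishes $x_\lambda\uparrow x$.

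The remaining task is the closedness of the positive cone in $E_h(\cM,\tau)$ under $\Delta$-norm convergence. This is exactly what the cited results supply: \cite[Lemma 2.4]{HLS2017} (together with \cite[Corollary 4.3]{HS} and \cite[Proposition 2]{DP2}) guarantees that convergence in the $\Delta$-norm $\|\cdot\|_E$ implies convergence in the (local) measure topology on $S(\cM,\tau)$, while \cite[Chapter II, Proposition 6.1]{DPS} ensures that the positive cone of $S_h(\cM,\tau)$ is closed in the measure topology. Composing these two facts transports closedness of the positive cone from $S(\cM,\tau)$ into $E_h(\cM,\tau)$, which is precisely the ingredient used above.

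The main obstacle is the continuous embedding of $(E(\cM,\tau),\|\cdot\|_E)$ into $(S(\cM,\tau),t_\tau)$ in the $\Delta$-normed (rather than quasi-normed) regime: the usual quick argument via the quasi-triangle inequality is not directly available, so one must lean on the solidity of $\|\cdot\|_E$ together with the characterisation of measure-topology convergence by $\tau(E^{|x_n|}(\varepsilon,\infty))\to 0$, as carried out in the references above. Once this continuity is in hand, the proof of the lemma is the short deduction outlined in the previous two paragraphs.
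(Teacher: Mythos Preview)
Your proof is correct and matches the paper's approach exactly: the paper gives no argument beyond stating that the lemma follows immediately from \cite[Lemma 2.4]{HLS2017} and \cite[Chapter II, Proposition 6.1]{DPS} (with the parallel references \cite[Corollary 4.3]{HS} and \cite[Proposition 2]{DP2}), and you have spelled out precisely how those two ingredients combine---continuous embedding of $(E,\|\cdot\|_E)$ into $(S(\cM,\tau),t_\tau)$ followed by closedness of the positive cone in the measure topology. One small remark: when you verify that $x$ is the \emph{least} upper bound you take $z\in E_h(\cM,\tau)$, but $x_\lambda\uparrow x$ is meant in $S_h(\cM,\tau)$; the identical argument handles arbitrary $z\in S_h(\cM,\tau)$, since you already have $x_\lambda\to x$ in measure and the positive cone of $S_h(\cM,\tau)$ is $t_\tau$-closed.
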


\begin{proposition}
If $E(\cM,\tau)\subset S(\cM,\tau)$ is a symmetrically $\Delta$-normed space, then the following statements are equivalent:
\begin{itemize}
  \item[(i)] $E(\cM,\tau)$ has order continuous $\Delta$-norm;
  \item[(ii)] $\left\|x_n\right\|_E\downarrow 0$ for every decreasing sequence $\{x_n\}_{n=1}^\infty$ in $E(\cM,\tau)^+$ satisfying $x_n\downarrow 0$.
\end{itemize}
\end{proposition}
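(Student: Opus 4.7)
The implication $(i)\Rightarrow(ii)$ is immediate, since every decreasing sequence $\{x_n\}_{n=1}^{\infty}$ in $E(\cM,\tau)^+$ with $x_n\downarrow 0$ is a particular case of a downwards directed net (indexed by $\mathbb{N}$), so $(i)$ forces $\|x_n\|_E\to 0$.

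For $(ii)\Rightarrow(i)$, I would argue by contradiction. Suppose there is a downwards directed net $\{x_\alpha\}$ in $E(\cM,\tau)^+$ with $x_\alpha\downarrow 0$ but $\|x_\alpha\|_E\not\to 0$. The chain $\beta\ge\alpha\Rightarrow 0\le x_\beta\le x_\alpha\Rightarrow\mu(x_\beta)\le\mu(x_\alpha)\Rightarrow\|x_\beta\|_E\le\|x_\alpha\|_E$ (via Definition \ref{opspace}) shows that $(\|x_\alpha\|_E)$ is a monotonically decreasing net of non-negative reals, converging to some $c>0$, so $\|x_\alpha\|_E\ge c$ for every $\alpha$. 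It therefore suffices to extract an increasing sequence of indices $\alpha_1\le\alpha_2\le\cdots$ such that the corresponding decreasing sequence $(x_{\alpha_n})_{n=1}^{\infty}\subset E(\cM,\tau)^+$ satisfies $\inf_n x_{\alpha_n}=0$: hypothesis $(ii)$ then yields $\|x_{\alpha_n}\|_E\to 0$, contradicting the lower bound $c$.

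To perform the extraction, I would fix $\alpha_0$ and pass to the tail $\{x_\alpha\}_{\alpha\ge\alpha_0}$, so that all relevant $x_\alpha$ are bounded above by $y:=x_{\alpha_0}\in E(\cM,\tau)^+$. Since $y\in S(\cM,\tau)$ is $\tau$-measurable, the spectral projections $q_N:=e^y(\lambda_N,\infty)$ are $\tau$-finite for any $\lambda_N>\mu(\infty;y)$, and choosing a decreasing sequence $\lambda_N\to\mu(\infty;y)$ gives $q_N\uparrow q_\infty:=e^y(\mu(\infty;y),\infty)$, while on the complement $\mathbf{1}-q_\infty=e^y[0,\mu(\infty;y)]$ one has the uniform spectral bound $y(\mathbf{1}-q_\infty)\le\mu(\infty;y)(\mathbf{1}-q_\infty)$. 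On each finite-trace corner $q_N\cM q_N$, the decreasing net $\{q_Nx_\alpha q_N\}_\alpha$ of bounded positive operators has order infimum $0$, and hence converges to $0$ in $L^2(q_N\cM q_N,\tau)$ by normality of the restricted trace. Using directedness of the index set, I would construct $\alpha_n\ge\alpha_{n-1}$ inductively so that $\|q_nx_{\alpha_n}q_n\|_2\le 2^{-n}$; a direct computation then yields $q_N(\inf_n x_{\alpha_n})q_N=0$ for every $N$, so $z:=\inf_n x_{\alpha_n}$ is supported on $\mathbf{1}-q_\infty$.

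The main obstacle is to conclude $z=0$. When $y\notin S_0(\cM,\tau)$, so $\mu(\infty;y)>0$, the projection $\mathbf{1}-q_\infty$ may fail to be $\sigma$-finite in $\cM$, and a decreasing net of bounded operators on a non-$\sigma$-finite von Neumann algebra need not admit a countable cofinal subsequence converging strongly to its infimum. The key insight is that on the complement the operators $(\mathbf{1}-q_\infty)x_\alpha(\mathbf{1}-q_\infty)$ are uniformly bounded by $\mu(\infty;y)(\mathbf{1}-q_\infty)$ and still decrease to $0$ in the order; by refining the inductive step to simultaneously control the $L^2$-deficit on the $q_n$-corner and the complementary piece (via a suitable diagonal choice of $\alpha_n$ cofinal enough in the directed set), one forces $(\mathbf{1}-q_\infty)z(\mathbf{1}-q_\infty)=0$ and hence $z=0$. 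Verifying that this sequential infimum genuinely vanishes in the order of $S(\cM,\tau)^+$, rather than only in some weaker topology, is the delicate point that allows hypothesis $(ii)$ to be invoked.
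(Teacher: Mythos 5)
Your implication (i)$\Rightarrow$(ii) is fine. The converse, however, has a genuine gap, and it sits exactly where you yourself locate ``the main obstacle'': your reduction from nets to sequences requires extracting a countable decreasing subfamily $(x_{\alpha_n})$ of the net whose order infimum is $0$, and this is in general impossible. A countable subset of a directed index set need not be cofinal (a countable cofinal subset need not exist at all), and on a non-$\sigma$-finite corner such as ${\bf 1}-q_\infty$ a uniformly bounded decreasing net with infimum $0$ may have the property that \emph{every} countable subfamily has strictly positive infimum: already in $\ell^\infty(I)$ with $I$ uncountable and the counting trace, the net ${\bf 1}-\chi_F$ indexed by finite $F\subset I$ decreases to $0$ while $\inf_n({\bf 1}-\chi_{F_n})={\bf 1}-\chi_{\bigcup_n F_n}\neq 0$ for any sequence of indices. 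So the purely order-theoretic extraction you are attempting is false as stated; any correct argument must use hypothesis (ii) in an essential way at this very step, and your proposed fix --- ``a suitable diagonal choice of $\alpha_n$ cofinal enough in the directed set'' --- is not an argument. (There are also smaller technical problems: $q_N x_\alpha q_N$ need not be bounded, since $yq_N$ is only bounded \emph{below} on the range of $q_N$, and $\tau(q_N y q_N)$ may be infinite, so your $L^2$-estimates would require a further truncation of the spectrum of $y$ from above.)

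The paper's proof sidesteps the obstruction by never asking the countable subsequence to have infimum $0$. It first shows the net is $\|\cdot\|_E$-Cauchy: if not, it takes a decreasing subsequence $(x_{\alpha_n})$ with $\|x_{\alpha_n}-x_{\alpha_{n+1}}\|_E\ge\varepsilon$, lets $y$ be its order infimum in $S(\cM,\tau)^+$ (which exists but need not vanish), and applies hypothesis (ii) to the sequence $x_{\alpha_n}-y\downarrow 0$. This is the key device you are missing: (ii) is applied to the \emph{differences}, which always decrease to $0$ regardless of the value of the infimum. The Cauchy property then produces a subsequence with $\|x_{\alpha_n}-x_\alpha\|_E\le 1/n$ for all $\alpha\ge\alpha_n$; setting $x:=\inf_n x_{\alpha_n}$ and applying (ii) to $x_{\alpha_n}-x$ gives $\|x_\alpha-x\|_E\to_\alpha 0$ for the whole net, and Lemma \ref{cor:cone} identifies this norm limit with the order infimum of the whole net, forcing $x=0$ and hence $\|x_\alpha\|_E\downarrow 0$. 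If you wish to retain your contradiction framework, you would still need this ``subtract the sequential infimum'' step to make hypothesis (ii) applicable; without it the argument does not close.
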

\begin{proof}
Since it is clear that (ii) follows from (i), it suffices to show that statement (ii) implies that $\left\|\cdot\right\|_E$ is order continuous.

Suppose that $\{x_\alpha\}$ is a decreasing net in $E(\cM,\tau)^+$ satisfying $x_\alpha \downarrow_\alpha 0$. It should be observed that this implies that $\{x_\alpha\}$ is a Cauchy net for the $\Delta$-norm.
Indeed, if $\{x_\alpha\}$ is not Cauchy, then there exists an $\varepsilon >0$ and an decreasing subsequence $\{x_{\alpha_n}\}_{n=1}^\infty$ such that $\|x_{\alpha_n} -x_{\alpha_{n+1}}\|_E \ge \varepsilon$ for all $n$.
By \cite[Proposition 2 (ii)]{DP2}, we obtain that there exists $y\in S(\cM,\tau)^+$ such that $x_{\alpha_n} \downarrow _n y$.
By assumption (ii),
this implies that $\|x_{\alpha_n} -y\|_E \rightarrow _n 0$.
Hence, we obtain that
$$\varepsilon \le \|x_{\alpha_n} -x_{\alpha_{n+1}}\|_E \le C_E(\|x_{\alpha_n} -y\|_E+\|x_{\alpha_{n+1}} -y\|_E )\rightarrow_n 0 ,$$
 which is a contradiction.
  This implies that there exists a decreasing subsequence $\{x_{\alpha_n}\}$ such that
  \begin{align}\label{ineq1n}
  \|x_{\alpha_n}-x_{\alpha }\|_E \le 1/n\end{align}
   for every $\alpha \ge \alpha_n$.
 Let $x\in S(\cM,\tau)^+$ be such that $x_{\alpha_n}\downarrow _n x$ (see \cite[Proposition 2]{DP2}).
 Since $x_{\alpha_n}\ge x$, it follows that $x\in E(\cM,\tau)^+$.
 It follows from (ii) that $\|x-x _{\alpha_n}\|_E \rightarrow 0$ as $n\rightarrow \infty$ and hence, by \eqref{ineq1n}, we have  $\|x-x_\alpha\|_E \rightarrow _\alpha 0$.
Appealing to Lemma \ref{cor:cone}, we obtain that that  $x_\alpha \downarrow x$.
 Hence, $x =0$ and so, $\|x_\alpha\|_E\downarrow_\alpha 0$.
\end{proof}
Assume that  $E(\cM,\tau)$ is a symmetrically $\Delta$-normed space.
The subset $E(\cM,\tau)^{oc}\subset E(\cM,\tau) $ is defined by setting
$$E(\cM,\tau)^{oc} = \{x \in E : |x|\ge x _\alpha \downarrow_\alpha 0\Rightarrow \|x_\alpha\|_E \downarrow 0 \} .$$
\begin{remark}\label{remark:ocb}
$E(\cM,\tau)^{oc}$ is a subspace of the $\left\|\cdot\right\|_E$-closure of $\cF(\tau)$ in $E(\cM,\tau)$.
Indeed, if $0\le x \in E(\cM,\tau)^{oc}$, then there exists an upwards directed net $\{x_\alpha\}$ in $\cF(\tau)^+$ such that $0\le x _\alpha \uparrow_\alpha x $ (see e.g. \cite[Corollary 8 (vi)]{DP2} or \cite[Chapter IV, Corollary 1.9]{DPS}), that is, $x \ge x-x_\alpha \downarrow_\alpha  0$.
Hence, $\|x-x_\alpha\|_E \downarrow_\alpha 0$.

Since $S_0(\cM,\tau)$ is closed in $S(\cM,\tau)$ with respect to the measure topology (see \cite[Section 2.4]{DP2})  and the embedding of $E(\cM,\tau)$ into $S(\cM,\tau)$ is continuous with respect to the measure topology (see \cite[Lemma 2.4]{HLS2017}), it follows from $\cF(\tau)\subset S_0(\cM,\tau)$ that  $E(\cM,\tau)^{oc}\subset S_0(\cM,\tau)$.
\end{remark}

\begin{proposition}\label{prop:ococ}
Let $E(0,\infty)\subset S(0,\infty)$ be a symmetrically $\Delta$-normed function space.
If $x\in E(\cM,\tau)$ and $\mu(x )\in E(0,\infty)^{oc}$, then $x\in E(\cM,\tau)^{oc}$.
In particular,
if $E(0,\infty)$
 has  order continuous $\Delta$-norm $\left\|\cdot \right\|_{E}$,
then $\left\|\cdot \right\|_{E}$ is an order continuous $\Delta$-norm on $E(\cM,\tau)$.
\end{proposition}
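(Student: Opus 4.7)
The plan is to reduce the noncommutative assertion to its commutative analogue via the generalised singular value function, deducing the ``in particular'' statement as an immediate corollary of the first part.

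I would begin by fixing $x \in E(\cM,\tau)$ with $\mu(x) \in E(0,\infty)^{oc}$ and an arbitrary net $\{x_\alpha\} \subset E(\cM,\tau)^+$ satisfying $|x| \ge x_\alpha \downarrow_\alpha 0$, the goal being to show $\|x_\alpha\|_E \downarrow 0$. Since $0 \le x_\beta \le x_\alpha \le |x|$ for $\beta \ge \alpha$, the monotonicity of $\mu$ yields $\mu(x_\beta) \le \mu(x_\alpha) \le \mu(x)$ pointwise, so $\{\mu(x_\alpha)\}$ is a decreasing net in $S(0,\infty)^+$ pointwise dominated by $\mu(x)$. Once I establish the pointwise convergence $\mu(t;x_\alpha) \downarrow_\alpha 0$ for each $t>0$, I will have $\mu(x) \ge \mu(x_\alpha) \downarrow 0$ in $S(0,\infty)^+$, and the hypothesis $\mu(x) \in E(0,\infty)^{oc}$ applied in the commutative setting immediately delivers $\|\mu(x_\alpha)\|_E \downarrow 0$. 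Since $\|x_\alpha\|_{E(\cM,\tau)} = \|\mu(x_\alpha)\|_E$ by the very definition of the noncommutative symmetrically $\Delta$-normed space, this forces $x \in E(\cM,\tau)^{oc}$ and the first assertion is proved.

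The main obstacle is therefore the pointwise convergence of singular values. The key input I would invoke is the standard fact (see \cite{FK} and \cite[Chapter II]{DPS}) that $x_\alpha \downarrow 0$ in $S(\cM,\tau)^+$ forces $x_\alpha \to 0$ in the measure topology $t_\tau$; the required $\tau$-finiteness of the relevant spectral projections is guaranteed by the domination $x_\alpha \le |x| \in S(\cM,\tau)$, since $d_{x_\alpha}(\varepsilon) \le d_{|x|}(\varepsilon) < \infty$ by the standard inequality $\mu(x_\alpha) \le \mu(|x|)$, after which normality of $\tau$ gives $d_{x_\alpha}(\varepsilon) \downarrow 0$ for every $\varepsilon > 0$. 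The formula $\mu(t;x_\alpha) = \inf\{s \ge 0 : d_{x_\alpha}(s) \le t\}$ recorded earlier in the excerpt then produces $\mu(t;x_\alpha) \downarrow 0$ for every $t>0$, as required.

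Finally, for the ``in particular'' statement, order continuity of $\|\cdot\|_E$ on $E(0,\infty)$ is precisely equivalent to $E(0,\infty)^{oc} = E(0,\infty)$, so $\mu(x) \in E(0,\infty)^{oc}$ holds automatically for every $x \in E(\cM,\tau)$. The first part of the proposition then forces $x \in E(\cM,\tau)^{oc}$, i.e.\ $E(\cM,\tau)^{oc} = E(\cM,\tau)$, which is exactly the order continuity of $\|\cdot\|_E$ on $E(\cM,\tau)$.
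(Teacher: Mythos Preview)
Your overall strategy coincides with the paper's: push the net $\{x_\alpha\}$ through $\mu$, establish $\mu(x)\ge \mu(x_\alpha)\downarrow 0$ in $E(0,\infty)$, and invoke $\mu(x)\in E(0,\infty)^{oc}$. However, the step ``$\mu(x_\alpha)\downarrow 0$'' is where your argument has two related gaps.

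First, you assert $d_{|x|}(\varepsilon)<\infty$ for every $\varepsilon>0$ on the grounds that $|x|\in S(\cM,\tau)$. That is false: membership in $S(\cM,\tau)$ only guarantees $d_{|x|}(\varepsilon)<\infty$ for \emph{sufficiently large} $\varepsilon$. What you actually need is $x\in S_0(\cM,\tau)$, i.e.\ $\mu(\infty;x)=0$. The paper supplies exactly this missing observation at the outset: $\mu(x)\in E(0,\infty)^{oc}$ forces $\mu(x)\in S_0(0,\infty)$ by Remark~\ref{remark:ocb} (applied commutatively), hence $\lim_{t\to\infty}\mu(t;x)=0$. Without this, the counterexample $x_\alpha=\chi_{(\alpha,\infty)}$ in $L_\infty(0,\infty)$ shows that $x_\alpha\downarrow 0$ need not imply $\mu(x_\alpha)\downarrow 0$.

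Second, your appeal to ``normality of $\tau$'' to conclude $d_{x_\alpha}(\varepsilon)\downarrow 0$ does not work as written in the noncommutative setting. Normality gives $\tau(p_\alpha)\downarrow \tau(\inf p_\alpha)$ for a \emph{decreasing net of projections}, but $0\le x_\beta\le x_\alpha$ does \emph{not} imply $e^{x_\beta}(\varepsilon,\infty)\le e^{x_\alpha}(\varepsilon,\infty)$ as projections (only Murray--von Neumann subequivalence, which yields monotonicity of the \emph{values} $d_{x_\alpha}(\varepsilon)$ but says nothing about their infimum being zero). The paper bypasses this by citing \cite[Lemma 3.5]{DDP2} (equivalently \cite[Chapter III, Lemma 2.14]{DPS}), which is precisely the statement that $|x|\ge x_\alpha\downarrow 0$ together with $\mu(\infty;|x|)=0$ implies $\mu(x_\alpha)\downarrow 0$. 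You should invoke that lemma rather than attempt a direct normality argument.
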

\begin{proof}
If $\mu(x)\in E(0,\infty)^{oc}$, then $x\in S_0(\cM,\tau)$  (see Remark \ref{remark:ocb}), that is, $\lim_{t\rightarrow \infty}\mu(t;x)=0$.
Suppose that $\{x_\alpha\}$ is a net in $E(\cM,\tau)$ such that $|x| \ge x_\alpha \downarrow _\alpha 0$.
It follows from \cite[Lemma  3.5]{DDP2} (see also \cite[Chapter III, Lemma 2.14]{DPS}) that $\mu(x)\ge \mu(x_\alpha)\downarrow _\alpha 0$ in $E(0,\infty)$.
Since $\mu(x)\in E(0,\infty)^{oc}$, this implies that $\|\mu(x_ \alpha)\|_E \downarrow_\alpha 0$,
that is, $\|x_\alpha\|_{E}\downarrow_\alpha 0$.
\end{proof}
We obtain the following corollary immediately. 
\begin{corollary}
$\left\|\cdot\right\|_{\log}$ is an order continuous $F$-norm on $\cL_{\log}(\cM,\tau)$.
\end{corollary}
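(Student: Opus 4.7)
The plan is to reduce the statement to its commutative counterpart via Proposition \ref{prop:ococ} and then verify order continuity directly on the function space $\cL_{\log}(0,\infty)$ using an elementary dominated convergence argument.

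First, by Proposition \ref{prop:ococ}, it suffices to establish that $\|\cdot\|_{\log}$ is an order continuous $F$-norm on the symmetric function space $\cL_{\log}(0,\infty)$, since the operator version is then automatic (noting that $\mu(x) \in \cL_{\log}(0,\infty)$ precisely when $x \in \cL_{\log}(\cM,\tau)$, and $\|x\|_{\log} = \|\mu(x)\|_{\log}$).

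Next, by the preceding proposition (characterising order continuity via decreasing sequences), it is enough to verify that $\|f_n\|_{\log} \downarrow 0$ whenever $\{f_n\}_{n=1}^\infty \subset \cL_{\log}(0,\infty)^+$ satisfies $f_n \downarrow 0$ pointwise a.e. For such a sequence, the pointwise monotonicity of $t \mapsto \log(1+t)$ gives
\[
0 \le \log(1+f_n(t)) \downarrow 0 \quad \text{a.e. } t \in (0,\infty),
\]
and the domination $\log(1+f_n(t)) \le \log(1+f_1(t))$ holds with $\log(1+f_1) \in L_1(0,\infty)$ because $f_1 \in \cL_{\log}(0,\infty)$. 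The Lebesgue dominated convergence theorem (or, equivalently, the monotone convergence theorem applied to $\log(1+f_1) - \log(1+f_n)$) then yields
\[
\|f_n\|_{\log} = \int_0^\infty \log(1+f_n(t))\,dt \;\longrightarrow\; 0,
\]
which is exactly the required order continuity on the function space.

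There is no real obstacle here: the whole argument is a bookkeeping reduction to the fact that $\log(1+\cdot)$ is monotone and continuous at $0$, combined with dominated convergence. The only point that needs care is verifying that the hypothesis of Proposition \ref{prop:ococ} is legitimately applicable — i.e., that one may work with generalised singular value functions and transfer the commutative order continuity to the operator setting — but this is already handled by that proposition.
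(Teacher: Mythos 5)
Your proposal is correct and follows essentially the same route as the paper, which derives the corollary immediately from Proposition \ref{prop:ococ}; you merely make explicit the commutative verification (dominated convergence applied to $\log(1+f_n)\downarrow 0$ with integrable majorant $\log(1+f_1)$) that the paper leaves implicit. The reduction from nets to sequences via the preceding equivalence and the identification $\|f\|_{\log}=\int_0^\infty\log(1+|f(t)|)\,dt$ for the rearrangement are both legitimate, so there is no gap.
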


Let $(\cM_1,\tau_1)$ and $(\cM_2,\tau_2)$ be two semifinite von Neumann algebras.
The general form of order-preserving  isometries  $T:\cL_{\log}(\cM_1,\tau_1) \stackrel{into}{\longrightarrow} \cL_{\log}(\cM_2,\tau_2)$ is obtained in Corollary \ref{cor:4.5}.
In particular, there is a Jordan $*$-homomorphism from $\cM_1$ into $\cM_2$.
We note that it is proved in  \cite{ACM} that in the special case when $(\cM_1,\tau_1)$ and $(\cM_2,\tau_2)$ are finite measure spaces,  all isometries  (not necessarily order-preserving)  $T: \cL_{\log}(\cM_1,\tau_1) \stackrel{into}{\longrightarrow} \cL_{\log}(\cM_2,\tau_2)$ are automatically disjointness preserving, which yields the general form of these isometries.
\subsection{Jordan $*$-isomorphism}
Let  $(\cM_1,\tau_1)$ and $(\cM_2,\tau_2)$ be two semifinite von Neumann algebras.
A  complex-linear map $J:\cM_1 \stackrel{into}{\longrightarrow} \cM_2$ is called Jordan $*$-homomorphism if  $J(x^*)=J(x)^*$ and $J(x^2)=J(x)^2$, $x\in \cM_1$ (equivalently, $J(xy+yx)=J(x)J(y)+J(y)J(x)$, $x,y\in \cM_1$).
The following definitions vary slightly in different literature.
In the present paper, we stick to the following definitions.
We call $J $ a Jordan $*$-monomorphism if it is injective.
If $J$ is a bijective
Jordan $*$-homomorphism, then it is called a Jordan $*$-isomorphism (see \cite[Definition 3.2.1]{BR}).
A   Jordan $*$-homomorphism is called normal if it is completely additive (equivalently, ultraweakly continuous).
Alternatively, we adopt the following equivalent definition: $J(x_\alpha)\uparrow J(x)$ whenever $x_\alpha\uparrow x\in \cM_1^+$ (see e.g \cite[Chapter I.4.3]{Dixmier}).
We note that there are some literature in which  injective Jordan $*$-homomorphisms are called Jordan $*$-isomorphisms, and bijective
Jordan $*$-homomorphisms are called surjective (or onto) Jordan $*$-isomorphisms (see e.g. \cite{Y,Sherman2005}).

For details on
Jordan $*$-homomorphism,
the reader is referred to \cite{BR} or \cite{Dixmier} (see also \cite{Kadison51} and \cite{Stormer}).
For the sake of convenience, we collect some properties of Jordan $*$-homomorphism/isomomorphism.
The next result is very simple and well-known (see e.g. \cite[p. 12]{SV}) and we omit its proof.
\begin{proposition}\label{prop:+-J}
Assume that $J:\cM_1\rightarrow \cM_2$ is a
 complex-linear positive (i.e., $J(a)\ge 0$, $a\in \cM_1^+$) or self-adjoint (i.e., $J(a)=J(a)^*$, $a=a^*\in \cM_1$)  mapping.
   If $J$
    satisfies that $J(x^2)=J(x)^2$ for every $x\in \cM_1^+$, then $J$ is a Jordan $*$-homomorphism.
\end{proposition}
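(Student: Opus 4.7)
The plan is to bootstrap the squaring identity $J(x^{2})=J(x)^{2}$ from $\cM_{1}^{+}$ to all of $\cM_{1}$ by a standard polarisation argument, first through self-adjoint elements and then through the decomposition $x=\operatorname{Re}x+i\operatorname{Im}x$. En route, I need to verify that $J$ respects the involution on all of $\cM_{1}$.

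First, I would check that in either hypothesis $J(x^{*})=J(x)^{*}$ for every $x\in\cM_{1}$. In the self-adjoint case this is by assumption on $\cM_{1,h}$, and complex-linearity extends it to $\cM_{1}$ via real/imaginary parts. In the positive case, $J(a)\geq 0$ is in particular self-adjoint for $a\in\cM_{1}^{+}$; the Jordan decomposition $x=a-b$ of a self-adjoint $x$ then yields $J(x)=J(a)-J(b)=J(x)^{*}$, and again complex-linearity handles the general case.

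Next, the crucial polarisation step: for $a,b\in\cM_{1}^{+}$, the element $a+b$ is positive, so
\begin{equation*}
J(a)^{2}+J(a)J(b)+J(b)J(a)+J(b)^{2}=J((a+b)^{2})=J(a^{2})+J(ab+ba)+J(b^{2}),
\end{equation*}
and the hypothesis applied to $a$ and $b$ individually cancels the squared terms, giving
\begin{equation*}
J(ab+ba)=J(a)J(b)+J(b)J(a),\qquad a,b\in\cM_{1}^{+}.
\end{equation*}
I would then lift this to self-adjoint $x,y\in\cM_{1,h}$ by writing $x=x_{+}-x_{-}$, $y=y_{+}-y_{-}$ with positive and negative parts and invoking complex-linearity term-by-term; this gives $J(xy+yx)=J(x)J(y)+J(y)J(x)$ on $\cM_{1,h}$. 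Setting $y=x$ yields $J(x^{2})=J(x)^{2}$ for every self-adjoint $x$.

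Finally, for an arbitrary $x=a+ib$ with $a,b\in\cM_{1,h}$, I would expand
\begin{equation*}
x^{2}=a^{2}-b^{2}+i(ab+ba),
\end{equation*}
apply complex-linearity of $J$, and substitute the self-adjoint identities from the previous step to conclude $J(x^{2})=J(a)^{2}-J(b)^{2}+i(J(a)J(b)+J(b)J(a))=(J(a)+iJ(b))^{2}=J(x)^{2}$. Combined with the involution-preserving property established at the start, this exhibits $J$ as a Jordan $*$-homomorphism. There is no serious obstacle: the entire argument is a clean polarisation, and the only point requiring any care is keeping track of which identities are established on $\cM_{1}^{+}$, on $\cM_{1,h}$, and on $\cM_{1}$ at each stage.
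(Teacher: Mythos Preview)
Your proof is correct; this is precisely the standard polarisation argument. The paper in fact omits its own proof of this proposition, declaring the result ``very simple and well-known'' and referring to \cite[p.~12]{SV}, so your write-up fills in exactly what the paper elides.
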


The following result is fundamental in the study of Jordan $*$-homomorphism (see \cite[Theorem 3.3]{Stormer}, see also \cite{Kadison51} or \cite[Appendix]{RR}).
\begin{lemma}\label{2.12}If $J$ is a Jordan $*$-homomorphism from $\cM_1$ into $\cM_2$,
then there exists a projection
 $z$  in the center of the ultra-weak closure of $J(\cM_1)$ such that  $J(\cdot)z$ is a $*$-homomorphism and $J(\cdot)({\bf 1}_{\cM_2}-z)$ is a  $*$-anti-homomorphism on $\cM_1$.
\end{lemma}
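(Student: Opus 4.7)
The plan is to follow Størmer's classical argument. Let $\cN$ denote the ultra-weak closure of $J(\cM_1)$ inside $\cM_2$. Since $J(\mathbf{1}_{\cM_1})$ is a self-adjoint idempotent, it is a projection $e$, and $\cN$ is a von Neumann algebra with unit $e$ contained in $e\cM_2 e$. The first technical step is to derive the triple-product identity
\[J(xyx) = J(x)J(y)J(x), \qquad x, y \in \cM_1.\]
Applying the Jordan rule $J(ab+ba) = J(a)J(b) + J(b)J(a)$ in turn to the pairs $(a,b) = (x^2, y)$, $(xy, x)$, and $(yx, x)$ yields three identities; their sum simplifies via $J(x^2) = J(x)^2$ to $2J(xyx) = 2J(x)J(y)J(x)$. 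Replacing $x$ by $x_1 + x_2$ and subtracting produces the polarised form $J(x_1 y x_2 + x_2 y x_1) = J(x_1)J(y)J(x_2) + J(x_2)J(y)J(x_1)$ for $x_1, x_2, y \in \cM_1$.

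The heart of the proof is the orthogonality
\[\bigl(J(xy) - J(x)J(y)\bigr)\,n\,\bigl(J(uv) - J(v)J(u)\bigr) = 0\]
for all $x,y,u,v \in \cM_1$ and $n \in \cN$. It suffices to establish this for $n = J(w)$, $w \in \cM_1$: one expands the resulting product and, using the Jordan rule together with the polarised triple-product identity repeatedly, shows that every term cancels. The general case then follows by ultra-weak continuity of multiplication with fixed operators. Denote by $\cI_+$ (resp.\ $\cI_-$) the ultra-weakly closed two-sided ideal of $\cN$ generated by the homomorphism defects $J(xy) - J(x)J(y)$ (resp.\ the anti-homomorphism defects $J(xy) - J(y)J(x)$). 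The orthogonality then gives $\cI_+ \cdot \cI_- = \{0\}$.

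Every ultra-weakly closed two-sided ideal of a von Neumann algebra has the form $p\cN$ for a unique central projection $p$; let $p_+$ and $p_-$ be the central projections corresponding to $\cI_+$ and $\cI_-$. The relation $\cI_+ \cI_- = 0$ forces $p_+ p_- = 0$. Set $z := e - p_+ \in Z(\cN)$. Since $z$ annihilates $\cI_+$, centrality of $z$ gives $zJ(xy) = zJ(x)J(y) = (zJ(x))(zJ(y))$ for all $x,y \in \cM_1$, so $x \mapsto zJ(x)$ is a $*$-homomorphism. From $p_- \le e - p_+ = z$ one gets that $e - z = p_+$ annihilates $\cI_-$, and a symmetric computation shows that $x \mapsto (e-z)J(x)$ is a $*$-anti-homomorphism. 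Since $J(x)\mathbf{1}_{\cM_2} = J(x)e = J(x)$, we have $J(x)(\mathbf{1}_{\cM_2} - z) = J(x) - J(x)z = (e-z)J(x)$, yielding exactly the statement of the lemma.

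The main obstacle is the verification of the orthogonality identity for $n = J(w)$. The expansion produces roughly a dozen terms, each of which must be rewritten using either the Jordan rule or the triple-product identity and then paired off with another term for cancellation. The bookkeeping is delicate: one must systematically distinguish occurrences of $J$ applied to an internal product (which can be re-expanded) from products of separate $J$-images (which cannot), and verify that no further independent identities beyond those already derived are needed. Once this orthogonality is in hand, the remaining extraction of the central projection is a standard application of the ideal theory of von Neumann algebras.
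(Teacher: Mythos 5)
The paper itself does not prove this lemma: it is quoted as a classical result with references to St\o rmer, Kadison and Rieckers--Roos, so your attempt has to be measured against the standard argument rather than against anything in the text. Your overall architecture --- triple-product identity, orthogonality of homomorphism defects against anti-homomorphism defects, and extraction of a central projection from ultra-weakly closed two-sided ideals --- is the right skeleton, and the final bookkeeping with $z=e-p_+$ is correct as far as it goes.

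The gap sits exactly at the step you yourself flag as ``the main obstacle''. The orthogonality
\[
\bigl(J(xy)-J(x)J(y)\bigr)\,J(w)\,\bigl(J(uv)-J(v)J(u)\bigr)=0
\]
for \emph{independent} pairs $(x,y)$ and $(u,v)$ is not obtainable by expanding with the Jordan rule and the polarised triple-product identity. What those identities yield is only the same-pair version $A^{xy}\,n\,B^{xy}=0$, where $A^{xy}=J(xy)-J(x)J(y)$ and $B^{xy}=J(xy)-J(y)J(x)$ (Herstein's Lemma~3, going back to Jacobson--Rickart); polarising it in $x$ or $y$ produces only symmetrised relations such as $A^{xy}nB^{uy}+A^{uy}nB^{xy}=0$, never the vanishing of an individual cross term. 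In fact the independent-pair identity is \emph{equivalent} to the conclusion of the lemma: once the lemma is known one has $A^{xy}=(e-z)A^{xy}$ and $B^{uv}=zB^{uv}$, whence the product vanishes; conversely, granting it, the rest of your argument is routine. So asserting that it follows ``by cancellation'' begs the question. The classical proofs bridge this point with a genuinely different ingredient: from $A^{xy}\,\cN\,B^{xy}=0$ one deduces that the central supports of $A^{xy}$ and $B^{xy}$ are orthogonal \emph{for each fixed pair}, and one must then run a uniformisation argument (an exhaustion over central projections, or St\o rmer's reduction through the enveloping von Neumann algebra) to produce one projection $z$ that works for all pairs simultaneously. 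A secondary issue: you treat the ultra-weak closure of $J(\cM_1)$ as a von Neumann algebra $\cN$ in which two-sided ideals make sense, but $J(\cM_1)$ need not be closed under multiplication; you should work in the von Neumann algebra generated by $J(\cM_1)$ (which also matters when you extend the orthogonality from $n=J(w)$ to general $n$) and then check that the resulting central projection lies where the statement asserts.
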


\begin{proposition}\label{JXJE}
If $J$ is a Jordan $*$-homomorphism from $\cM_1$ into $\cM_2$, then
for any commuting $ x,y\in \cM_1$, we have
\begin{align}\label{eq27}
J(xy)=J(x)J(y)=J(y)J(x).
\end{align}
\end{proposition}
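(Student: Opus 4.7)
The plan is to reduce the claim to Lemma \ref{2.12}, which already splits a Jordan $*$-homomorphism into the sum of a $*$-homomorphism and a $*$-anti-homomorphism on complementary central corners. Once one has such a decomposition, multiplicativity on commuting pairs is a one-line verification; the role of the commutativity hypothesis is precisely to resolve the ordering ambiguity introduced by the anti-homomorphism piece.

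Concretely, I would apply Lemma \ref{2.12} to obtain a projection $z$ in the centre of the ultra-weak closure of $J(\cM_1)$ such that $x \mapsto J(x)z$ is a $*$-homomorphism and $x \mapsto J(x)({\bf 1}_{\cM_2} - z)$ is a $*$-anti-homomorphism. Since $z$ lies in the centre of the ultra-weak closure of $J(\cM_1)$, it commutes with $J(x)$ and $J(y)$, so multiplication by $z$ or ${\bf 1}_{\cM_2}-z$ behaves as a projection onto two orthogonal corners. Writing
\begin{equation*}
J(xy) = J(xy)z + J(xy)({\bf 1}_{\cM_2}-z),
\end{equation*}
the homomorphism property gives $J(xy)z = J(x)J(y)z$, while the anti-homomorphism property gives $J(xy)({\bf 1}_{\cM_2}-z) = J(y)J(x)({\bf 1}_{\cM_2}-z)$. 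To combine these into a single product, I would now use the hypothesis $xy=yx$ in the form $J(xy)=J(yx)$ and apply the two properties to $yx$ instead: the homomorphism piece yields $J(xy)z = J(y)J(x)z$, and the anti-homomorphism piece yields $J(xy)({\bf 1}_{\cM_2}-z) = J(x)J(y)({\bf 1}_{\cM_2}-z)$. Adding the matching halves then gives both $J(xy)=J(x)J(y)$ and $J(xy)=J(y)J(x)$, establishing \eqref{eq27}.

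There is essentially no serious obstacle here; the only point deserving care is verifying that the central projection $z$ really does commute with $J(x)$ for every $x\in\cM_1$ (so that expressions such as $J(x)J(y)z$ are unambiguous), but this is exactly what ``centre of the ultra-weak closure of $J(\cM_1)$'' supplies. An alternative, more elementary approach would be to polarise $J(x^2)=J(x)^2$ to obtain the Jordan product identity $2J(xy) = J(x)J(y)+J(y)J(x)$ for commuting self-adjoint $x,y$, but this only yields the symmetric product and still requires showing $J(x)$ and $J(y)$ commute; invoking Lemma \ref{2.12} bypasses this and handles arbitrary (not necessarily self-adjoint) commuting $x,y$ in one stroke.
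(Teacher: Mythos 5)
Your proposal is correct and follows essentially the same route as the paper: both invoke Lemma \ref{2.12} to split $J$ into a $*$-homomorphism on the corner determined by the central projection $z$ and a $*$-anti-homomorphism on the complementary corner, and both use the hypothesis $xy=yx$ to rewrite the anti-homomorphism piece so that the two halves recombine into $J(x)J(y)$ (and, symmetrically, $J(y)J(x)$). No gaps.
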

 \begin{proof}
Let $z$ be a projection in the center of the ultra-weak closure of $J(\cM_1)$ such that  $J(\cdot)z$ is a $*$-homomorphism and $J(\cdot)({\bf 1}_{\cM_2} -z)$ is a  $*$-anti-homomorphism on $\cM_1$ (see Lemma \ref{2.12}).
It now  follows that
\begin{align*}
J(xy) &= J(xy)z +J( xy)({\bf 1}_{\cM_2}-z)= J(xy)z +J(yx)({\bf 1}_{\cM_2}-z)\\
 &=J(x)J(y)z+J(x)J(y) ({\bf 1}_{\cM_2}-z) =J(x)J(y),
\end{align*}
which completes the proof.
\end{proof}

If  $J$ is a Jordan $*$-homomorphism, then  for any  self-adjoint $a\in \cM_1$,    $J(a)$ is self-adjoint.
It is well-known (see e.g. \cite{Stormer} or \cite[Page 211]{BR}) that every Jordan $*$-homomorphism is positive, i.e.,  if $a\ge 0$, then
\begin{align}\label{JA>0}
J(a)  \ge 0.\end{align}

The following proposition provides a criterion for verifying  that a Jordan $*$-homomorphism is injective.

\begin{proposition}\label{prop:fai}
If $J:\cM_1\rightarrow \cM_2$ is a  Jordan $*$-homomorphism, then $J$ is injective  if and only if $J(p) >0$ for every  $\tau_1$-finite $0\ne p\in \cP(\cM_1)$.
\end{proposition}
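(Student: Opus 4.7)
The plan is to dispatch the \lq\lq only if\rq\rq\ direction at once and then reduce the \lq\lq if\rq\rq\ direction in stages to the case of a single positive projection.

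For the forward direction, observe that for any $p\in\cP(\cM_1)$, the Jordan $*$-homomorphism relations give $J(p)^2=J(p^2)=J(p)$ and $J(p)^*=J(p^*)=J(p)$, so $J(p)$ is itself a projection in $\cM_2$. If $J$ is injective and $0\ne p$ is $\tau_1$-finite, then $J(p)\ne 0$, and a nonzero projection is strictly positive.

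For the converse, assume $J(q)>0$ for every nonzero $\tau_1$-finite $q\in\cP(\cM_1)$, and suppose $J(x)=0$ for some $x\in\cM_1$. Splitting $x$ into its self-adjoint components and using $J(x^*)=J(x)^*=0$, we reduce to the case $x=x^*$. Next, decompose $x=x_+-x_-$; since $x_+$ and $x_-$ commute with $x_+x_-=0$, Proposition \ref{JXJE} yields $J(x_+)J(x_-)=J(x_+x_-)=0$, while $J(x_+)=J(x_-)$ and both are positive by \eqref{JA>0}. Setting $y:=J(x_+)\ge 0$, we obtain $y^2=0$, so $y=0$, and it suffices to treat the case $0\le x\in\ker J$.

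For such $x$ and any $\lambda>0$, the spectral inequality $\lambda e^x(\lambda,\infty)\le x$ combined with the positivity of $J$ gives $0\le\lambda J(e^x(\lambda,\infty))\le J(x)=0$, whence $J(e^x(\lambda,\infty))=0$. It remains to verify that any $q\in\cP(\cM_1)$ with $J(q)=0$ must vanish: if $q\ne 0$, then semifiniteness of $\tau_1$ provides a nonzero $\tau_1$-finite subprojection $q_0\le q$, and positivity of $J$ applied to $q-q_0\ge 0$ yields $0\le J(q_0)\le J(q)=0$, contradicting the standing hypothesis. Hence $e^x(\lambda,\infty)=0$ for every $\lambda>0$, which forces $x=0$. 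The only genuinely non-trivial step is this last reduction, where the semifiniteness of $\tau_1$ is essential in propagating the hypothesis from $\tau_1$-finite projections to arbitrary ones.
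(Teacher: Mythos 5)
Your proof is correct and follows essentially the same route as the paper: the same reduction from general $x$ to self-adjoint and then to positive elements via $J(x_+)J(x_-)=0$, and the same key use of semifiniteness to produce a nonzero $\tau_1$-finite subprojection on which positivity of $J$ forces a contradiction. The only cosmetic difference is that you run the final step contrapositively through the spectral projections $e^x(\lambda,\infty)$, whereas the paper directly shows $x>0$ implies $J(x)\ge \lambda J(p)>0$ for some $\tau_1$-finite $p\le \lambda^{-1}x$.
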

\begin{proof}
It
 suffices to prove the ``if'' part.

For every $x >0$, there exists a $\tau_1$-finite projection $p\in \cM_1$ such that $x\ge \lambda p$ for some $\lambda > 0$.
Therefore, $$J(x)=J(x-\lambda p) +J(\lambda p ) \stackrel{\eqref{JA>0}}{\ge }  J(\lambda p ) >0 .$$

Assume  that  $x \in \cM_1$ with $J(x )=0$.
Since $J$ is a Jordan $*$-homomorphism, it follows that  $ J(\real ( x))  $ and $ J( \imm (x))$ are self-adjoint.
Therefore,
by $ J(\real (x))  + i J( \imm (x))  = J(x) =0$, we obtain that  $ J(\real (x)) = J(\imm (x))=0$.

Let  $a :=  \real (x )$ or $\imm (x )$.
In particular,  $a $ is self-adjoint  with   $J(a ) =0$.
Then, $J(a _+) =J(a _-)\ge 0 $.
By \eqref{eq27}, we have $J(a _+)  J(a _-)=0$, which implies that $J(a _+)=  J(a_-)=0$,
i.e.,   $a_+=a_- =0$.
Hence, $a=0$. That is, $x=0$.
\end{proof}

\begin{remark}\label{isotoJM}
Assume that $J:\cM_1\rightarrow \cM_2$ is a normal Jordan $*$-homomorphism.
Clearly, $J(\cM_1)\subset J({\bf 1}_{\cM_1}) \cM_2 J({\bf 1}_{\cM_1})$ (see Proposition \ref{JXJE}).
By \cite[Part I, Chapter 4.3, Corollary 2]{Dixmier},
 $J(\cM_1)$ is  a weakly closed $*$-subalgebra of $\cM_2$.
In particular, if $J$ is an injective, then $J$ is a normal Jordan $*$-isomorphism from $\cM_1 $ onto $J(\cM_1)$.
\end{remark}

For the sake of convenience, we denote by
$\cP_{fin} (\cM_1) $ the subset of $\cP(\cM_1)$ whose elements have finite traces.
Lemma \ref{familyofF} below is drawn from Yeadon's proof \cite{Y},
which contains  a beautiful trick of constructing a Jordan $*$-homomorphism from Jordan $*$-homomorphisms on reduced von Neumann algebras $e\cM_1 e$, $e \in \cP_{fin} (\cM_1)$.
Before proceeding to the proof, let us recall the following fact concerning on strong operator convergence.
\begin{proposition}\label{prop:aeasyfact}
Let $\{x_i\}$ be a uniformly bounded net in a von Neumann algebra $\cM$ and $\{p_i\}$ be a increasing net of projections increasing to ${\bf 1}$.
If $x_i =p_i x_jp _i$ for every $j\ge i$, then $so-\lim_i x_i$ exists (denoted by $x$).
In particular, $x_i =p_i x p_i$.
\end{proposition}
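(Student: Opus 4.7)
The plan is to exploit the rigid compatibility $x_i = p_i x_j p_i$ to first force a monotone convergence for vectors in the ranges of the $p_k$'s, then bootstrap to all of $\cH$ using the uniform bound and $p_k \uparrow {\bf 1}$ strongly.

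First I would record the base observation: taking $j = i$ in the hypothesis gives $x_i = p_i x_i p_i$ for every $i$, so each $x_i$ is \emph{supported under $p_i$} in the sense that $x_i = p_i x_i = x_i p_i$. Let $M := \sup_i \|x_i\|_\infty < \infty$.

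The main step is to show $\{x_i \xi\}_i$ is norm-Cauchy for every $\xi \in \cH$. I first handle $\xi$ of the form $\xi = p_{i_0} \xi$ for some fixed $i_0$. Indeed, for any $i \ge i_0$ and any $j \ge i$, one has $p_i \xi = \xi$ and hence
\begin{equation*}
x_i \xi \;=\; p_i x_j p_i \xi \;=\; p_i x_j \xi.
\end{equation*}
Thus for $i_0 \le i \le i'$ and $j \ge i'$ we get $x_{i'}\xi - x_i \xi = (p_{i'} - p_i) x_j \xi$, which is orthogonal to $x_i\xi = p_i x_j\xi$. This yields the Pythagorean identity
\begin{equation*}
\|x_{i'}\xi\|^2 \;=\; \|x_i\xi\|^2 + \|x_{i'}\xi - x_i\xi\|^2.
\end{equation*}
In particular $i \mapsto \|x_i\xi\|$ is monotone increasing on $\{i : i \ge i_0\}$ and bounded by $M\|\xi\|$, so it converges, and the identity forces $\|x_{i'}\xi - x_i\xi\| \to 0$ as $i,i' \to \infty$. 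Hence $\{x_i\xi\}$ is Cauchy whenever $\xi$ lies in some $p_{i_0}\cH$.

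For general $\xi \in \cH$, since $p_i \uparrow {\bf 1}$ strongly, $\|\xi - p_{i_0}\xi\| \to 0$, and combining with the uniform bound $\|x_i\|_\infty \le M$ and the previous step applied to $p_{i_0}\xi$, a standard $\varepsilon/3$-argument shows $\{x_i\xi\}$ is Cauchy for every $\xi$. Define $x\xi := \lim_i x_i\xi$. Linearity is immediate and $\|x\| \le M$, so $x \in B(\cH)$; since $\cM$ is SOT-closed in $B(\cH)$ and $x_i \in \cM$, we obtain $x \in \cM$. Finally, for fixed $i$ and all $j \ge i$ we have $x_i = p_i x_j p_i$; the maps $y \mapsto p_i y p_i$ are SOT-continuous on norm-bounded sets, so passing to the SOT limit in $j$ gives $x_i = p_i x p_i$, which proves the ``in particular'' assertion.

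The only mildly subtle point is the monotonicity/orthogonality step that drives convergence on $\bigcup_{i_0} p_{i_0}\cH$; everything else is routine once that Pythagorean identity is in hand. No single step should be a serious obstacle.
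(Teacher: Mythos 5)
Your proof is correct, but it takes a genuinely different route from the paper's. The paper extracts a WOT-convergent subnet of $\{x_i\}$ using the weak-operator compactness of the bounded ball of $\cM$, calls its limit $x$, pushes the relation $x_i=p_ix_jp_i$ through the WOT limit (multiplication by the fixed projections $p_i$ being WOT-continuous) to get $x_i=p_ixp_i$ for all $i$, and only then observes that $p_ixp_i\to x$ strongly because $p_i\uparrow{\bf 1}$. You instead prove SOT convergence directly: the identity $x_i\xi=p_ix_j\xi$ for $\xi\in p_{i_0}\cH$ and the orthogonality of $p_i$ and $p_{i'}-p_i$ give the Pythagorean relation $\|x_{i'}\xi\|^2=\|x_i\xi\|^2+\|x_{i'}\xi-x_i\xi\|^2$, so the bounded monotone net $\|x_i\xi\|^2$ converges and forces $\{x_i\xi\}$ to be norm-Cauchy (for a net one passes through a common upper bound of $i,i'$, which your phrasing glosses over but which is routine); the uniform bound and $p_{i_0}\uparrow{\bf 1}$ then extend this to all of $\cH$. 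Your argument is more elementary and constructive — it avoids compactness and subnets entirely and identifies the limit as an explicit strong limit from the outset — at the cost of being somewhat longer; the paper's argument is shorter but leans on Banach–Alaoglu-type compactness and the cofinality of the chosen subnet. Both correctly conclude $x\in\cM$ (SOT-closedness) and recover $x_i=p_ixp_i$ by passing to the limit in $j$ in $x_i=p_ix_jp_i$.
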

\begin{proof}
By the compactness of the unit ball of a von Neumann algebra in the weak operator topology (see e.g. \cite[Chapter IX, Proposition  5.5]{Conway}),
there exists a wo-converging subnet $\{x_{i_k}\}$ of $\{x_i\}$.
Let $x:=wo-\lim_{i_k}x_{i_k}$.
In particular, we have $x_{i_k}=wo-\lim_{i_j\ge i_k} p_{i_k} x_{i_j } p_{i_k} = p_{i_k} xp_{i_k}$.
By the assumption, for every $i\le i_k$, we have
$x_i=p_i x_{i_k} p_i=p_i p_{i_k} xp_{i_k} p_i=p_i xp _i$.
Since $\{x_{i_k}\}$ is a subnet of $\{x_i\}$,
it follows that $x_i=p_i x p_i$ for every $i$.
Clearly, $p_ixp_i\rightarrow x $ in strong operator topology.
\end{proof}

\begin{lemma}\label{familyofF}
Let $\{J_e: e \cM_1 e \rightarrow \cM_2 \}_{E\in \cP_{fin} (\cM_1)}$ be a family of normal Jordan $*$-homomorphisms.
If for every $e \le f\in \cP_{fin} (\cM_1)$, we have $J_f=J_e$ on $e \cM_1 e $,
then there exists a normal Jordan $*$-homomorphism $J:\cM_1\rightarrow \cM_2$ agreeing with $J_e $ on $e\cM_1 e$ for every
 $e \in \cP_{fin} (\cM_1) $.
Moreover, if $J_e$ is injective for every $e\in \cP_{fin}(\cM_1)$, then $J$ is a normal Jordan $*$-isomorphism from $\cM_1 $ onto $J(\cM_1)$ and $J(\cM_1)$ is a weakly closed $*$-subalgebra  in $\cM_2$.
\end{lemma}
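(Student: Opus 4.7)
The plan is to realize $J(x)$, for $x \in \cM_1$, as the strong operator limit of $J_e(exe)$ along the directed set $\cP_{fin}(\cM_1)$, which is cofinal in $\cP(\cM_1)$ with supremum ${\bf 1}_{\cM_1}$ by semifiniteness of $\tau_1$. Set $p_e := J_e(e)$; each $p_e$ is a projection in $\cM_2$ (since $J_e(e)$ is self-adjoint and idempotent), and the compatibility hypothesis combined with positivity of $J_f$ gives $p_e \le p_f$ whenever $e \le f$; let $p := \sup_e p_e$. Applying the standard Jordan identity $J_f(aba) = J_f(a) J_f(b) J_f(a)$ (valid for self-adjoint $a$ and arbitrary $b$) with $a = e$ and $b = fxf$, together with $J_f(e) = J_e(e) = p_e$, one obtains, for all $e \le f$ in $\cP_{fin}(\cM_1)$ and every $x \in \cM_1$,
\begin{equation*}
J_e(exe) \;=\; J_f(exe) \;=\; J_f(e(fxf)e) \;=\; p_e J_f(fxf) p_e.
\end{equation*}
Since Jordan $*$-homomorphisms between C*-algebras are norm-contractive, $\|J_e(exe)\| \le \|x\|$; thus Proposition~\ref{prop:aeasyfact}, applied inside $p\cM_2 p$, produces a strong operator limit that I take as the definition of $J(x)$. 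The element $J(x)$ lies in $p\cM_2 p$ and satisfies $p_e J(x) p_e = J_e(exe)$ for every $e$.

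Linearity of $J$ and $J(x^*) = J(x)^*$ are immediate from the corresponding properties of the $J_e$ and strong operator continuity of these operations on bounded sets. By Proposition~\ref{prop:+-J}, to conclude that $J$ is a Jordan $*$-homomorphism it suffices to verify $J(x^2) = J(x)^2$ for every self-adjoint $x$. Joint strong operator continuity of multiplication on bounded sets yields
\begin{equation*}
J(x)^2 \;=\; \lim_e J_e(exe)^2 \;=\; \lim_e J_e\bigl((exe)^2\bigr)
\end{equation*}
in the strong topology, whereas $J(x^2) = \lim_e J_e(ex^2 e)$ by definition, so the task reduces to showing that $J_e(ex^2 e - (exe)^2) = J_e\bigl(ex({\bf 1} - e)xe\bigr) \to 0$ strongly. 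Fix $f \in \cP_{fin}(\cM_1)$; for $e \ge f$ the Jordan identity and the compatibility hypothesis give
\begin{equation*}
p_f J_e\bigl(ex({\bf 1} - e)xe\bigr) p_f \;=\; J_e\bigl(fx({\bf 1} - e)xf\bigr) \;=\; J_f\bigl(fx({\bf 1} - e)xf\bigr),
\end{equation*}
and since $({\bf 1} - e)\xi \to 0$ for every $\xi \in \cH$, we have $fx({\bf 1} - e)xf \to 0$ strongly; normality of the fixed map $J_f$ forces the right-hand side to $0$. Taking $f \uparrow {\bf 1}_{\cM_1}$ (so $p_f \uparrow p$) and using $J(x^2), J(x)^2 \in p\cM_2 p$ yields $J(x^2) = J(x)^2$. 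I expect this step to be the main obstacle: the naive attempt to pass to the limit in $J_e((exe)^2) = J_e(exe)^2$ produces the wrong element inside, and the correction term $ex({\bf 1}-e)xe$ must be controlled by sandwiching with a smaller finite projection $f$ so that normality of the single map $J_f$ can be invoked.

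Normality of $J$ follows the same sandwich template: for an increasing net $x_\alpha \uparrow x$ in $\cM_1^+$, positivity of $J$ produces $J(x_\alpha) \uparrow y$ for some $y \le J(x)$, and for each $f \in \cP_{fin}(\cM_1)$ normality of $J_f$ gives
\begin{equation*}
p_f J(x_\alpha) p_f \;=\; J_f(f x_\alpha f) \;\uparrow\; J_f(fxf) \;=\; p_f J(x) p_f,
\end{equation*}
so $p_f y p_f = p_f J(x) p_f$ for every such $f$; letting $f \uparrow$ together with $J(x), y \in p\cM_2 p$ forces $y = J(x)$. Finally, if each $J_e$ is injective then for any $0 \ne q \in \cP_{fin}(\cM_1)$ and all $e \ge q$ we have $J_e(eqe) = J_q(q) = p_q \ne 0$, hence $J(q) = p_q > 0$; Proposition~\ref{prop:fai} yields injectivity of $J$, and Remark~\ref{isotoJM} delivers the remaining conclusions that $J(\cM_1)$ is a weakly closed $*$-subalgebra of $\cM_2$ and that $J$ is a Jordan $*$-isomorphism onto it.
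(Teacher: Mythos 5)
Your proposal is correct and follows essentially the same route as the paper: define $J(x)$ as the strong operator limit of the compatible, uniformly bounded net $J_e(exe)$ via Proposition \ref{prop:aeasyfact} and the Kaplansky identity $J_f(e(fxf)e)=J_f(e)J_f(fxf)J_f(e)$, verify the Jordan property through Proposition \ref{prop:+-J}, get normality by compressing with $p_f$ and invoking normality of the individual $J_f$, and finish with Propositions \ref{prop:fai} and Remark \ref{isotoJM}. The only (harmless) divergence is in proving $J(x^2)=J(x)^2$: the paper first establishes normality of $J$ and writes $J(x^2)=\lim_e J(xex)$, whereas you control the correction term $J_e(ex({\bf 1}-e)xe)$ directly by sandwiching with a fixed $p_f$ and using normality of $J_f$ — both arguments work.
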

\begin{proof}
Note that $J_f(f)-J_e(e)=J_f(f-e) \ge 0$, $e\le f\in \cP_{fin}(\cM_1)$.
We define $$J({\bf 1}_{\cM_1}):= \sup \{J_f(f):f\in \cP_{fin}(\cM_1) \}.$$

Since $J_f$ is a Jordan $*$-homomorphism on $f \cM_1  f$, it follows from  \cite[Lemma 1]{Kaplansky} (or \cite[Lemma 2]{Herstein}) that  for every $x\in \cM_1$ and $e\le f\in \cP_{fin}(\cM_1)$, we have
\begin{align}
\label{stronglimit}
J_f(e)J_f(fxf) J_f(e)=  J_f(exe)=  J_e(exe).
\end{align}
Note that we have $\|J_e(eye)\|_\infty \le \left\|J_e(e\left\|y\right\|_\infty e )
\right\|_\infty\le \|y\|_\infty $, $0\le y\in \cM_1^+$.

Applying  Proposition \ref{prop:aeasyfact} to the reduced algebra $J({\bf 1}_{\cM_1})\cM_1 J({\bf 1}_{\cM_1})$,
for every $x\in \cM_1$,
 we get that $\{J_e(exe)\}_{e\in \cP_{fin}(\cM_1)}$ is a uniformly  bounded net converging in the strong operator topology,
where the net is indexed by the upwards-directed set of projections $e \in \cP_{fin}(\cM_1)$.
 So,
 we can extend $J$ to the whole  $\cM_1$ by defining
 \begin{align}\label{def:J}
 J(x):=so-\lim _{e \in \cP_{fin}(\cM_1)}  J_e(exe ).
 \end{align}
Moreover, by \eqref{stronglimit}, we obtain  that
\begin{align}\label{J}
J(e)J(x)J(e)=J(exe) =J_e(exe)
\end{align}
for every $x \in \cM_1$ and projection  $e \in \cP_{fin}(\cM_1)$.
By the definition,  $J$ is complex-linear and $J(x)$ is self-adjoint for every  self-adjoint $x \in \cM_1$.
It remains to prove that $J$ is a normal  Jordan $*$-homomorphism.

Given a net $\{x_\alpha\}$ with $ x_\alpha \uparrow x \in \cM_1^+$  and any $e \in \cP_{fin}(\cM_1) $,
by   \cite[Proposition 1 (vi)]{DP2} and the normality of $J_e$ on $e \cM_1 e$, $e \in \cP_{fin}(\cM_1)$, we obtain  that
\begin{align*}
J(e)J(x)J(e) \stackrel{\eqref{J}}{=} J(exe) =\sup_\alpha  J(ex_\alpha e)& \stackrel{\eqref{J}}{=} \sup_\alpha J(e)J(x_\alpha) J(e)\\
&= J(e)\sup_\alpha  J(x_\alpha) J(e).
\end{align*}
Now,
taking the (so)-limit,
we obtain that
$$J({\bf 1}_{\cM_1}) J(x)J({\bf 1}_{\cM_1}) =J({\bf 1}_{\cM_1}) \sup_\alpha  J(x_\alpha) J({\bf 1}_{\cM_1}) .$$
By the construction of $J$ (see \eqref{def:J} and \eqref{J}),  one see that  the support projections $s(J(x)), s(J(x_\alpha))  \le   \sup \{J(f):f\in \cP_{fin}(\cM_1)\}= J({\bf 1}_{\cM_1}) $, which implies  that  $J(x)=\sup J(x_\alpha)$.
That is,  $J$ is normal.

For any  $x\in \cM_1^+$, by the construction of $J$ (see \eqref{def:J} and \eqref{J}),  it is easy to see that   $ so-\lim_{f\in \cP_{fin}(\cM_1)} J(f)= J({\bf 1}_{\cM_1})\ge  s(J(x))$.
Hence, for any   $x \in \cM_1^+$, by the normality of $J$ and the (so)-continuity of multiplication on the unit ball of a von Neumann algebra,  we have
\begin{align*}
J(x^2) & = so-\lim_{e \in \cP_{fin}(\cM_1) } J(xex)= so-\lim_{e \in   \cP_{fin}(\cM_1)} J(e ) J(xex) J(e) \\
& \stackrel{\eqref{J}}{=}so-\lim_{e \in  \cP_{fin}(\cM_1)} J_e (exexe) =  so- \lim_{e\in  \cP_{fin}(\cM_1)} (J_e(exe))^2 \stackrel{\eqref{def:J}}{=}J(x)^2.
\end{align*}
By Proposition \ref{prop:+-J}, $J$ is a Jordan $*$-homomorphism.

Assume now that  $J_e$ is injective for every $e\in \cP_{fin}(\cM_1)$.
By Proposition \ref{prop:fai}, $J$ is a Jordan $*$-monomorphism.
Moreover, since $J$ is normal, it follows from
 Remark \ref{isotoJM} that $J(\cM_1 )$ is  a weakly closed $*$-subalgebra  in $\cM_2$.
\end{proof}


If $J$ is a normal Jordan $*$-monomorphism,
then  there exists a projection $p\in \cZ(\cM_1)$ (the center of $\cM_1$) such that $J$ is a $*$-monomorphism on $p\cM_1 p$ and a $*$-anti-monomorphism on $ ({\bf 1}_{\cM_1}-p)\cM_1 ({\bf 1}_{\cM_1}-p)$.
The following property is an easy consequence of this fact.
\begin{proposition}Assume that $J:\cM_1\rightarrow \cM_2$ is a normal  Jordan $*$-monomorphism.
There exists a projection $p \in \cZ(\cM_1)$  such that
for every $x \in \cM_1$, we have
\begin{align}\label{lemma:desofabs}
|J(x)| = J\left(p|x| + ({\bf 1}_{\cM_1}-p)|x^*| \right).
\end{align}
\end{proposition}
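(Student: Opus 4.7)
The plan is to invoke the structure theorem for Jordan $*$-homomorphisms (Lemma \ref{2.12}) to obtain a projection $z$ in the centre of the ultra-weak closure of $J(\cM_1)$ so that $J(\cdot)z$ is a $*$-homomorphism and $J(\cdot)({\bf 1}_{\cM_2}-z)$ is a $*$-anti-homomorphism. Since $J$ is normal, $J(\cM_1)$ is already weakly closed (Remark \ref{isotoJM}), so $z\in J(\cM_1)$ and by injectivity $z=J(p)$ for a unique projection $p\in\cM_1$. The real work is then to show that this $p$ lies in $\cZ(\cM_1)$, not merely that $z$ is central in $J(\cM_1)$.

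To see that $p$ is central, I would compute $J(px)$ and $J(xp)$ by splitting along $z$ and ${\bf 1}_{\cM_2}-z$. On the $z$-side, the $*$-homomorphism property and $J(p)=z$, $z^2=z$, give $J(px)z = J(p)J(x)z = zJ(x)z = J(x)z$ and symmetrically $J(xp)z=J(x)J(p)z=J(x)z$. On the $({\bf 1}_{\cM_2}-z)$-side, the $*$-anti-homomorphism property yields $J(px)({\bf 1}_{\cM_2}-z)=J(x)J(p)({\bf 1}_{\cM_2}-z)=J(x)z({\bf 1}_{\cM_2}-z)=0$ and likewise $J(xp)({\bf 1}_{\cM_2}-z)=0$. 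Hence $J(px)=J(xp)=J(x)z$, and injectivity of $J$ forces $px=xp$ for every $x\in\cM_1$, i.e.\ $p\in\cZ(\cM_1)$.

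With centrality of $p$ in hand, the same $z$ vs.\ ${\bf 1}_{\cM_2}-z$ bookkeeping shows that $J$ restricted to $p\cM_1 p$ is a genuine $*$-homomorphism with image in $z\cM_2 z$, and $J$ restricted to $({\bf 1}_{\cM_1}-p)\cM_1({\bf 1}_{\cM_1}-p)$ is a genuine $*$-anti-homomorphism with image in $({\bf 1}_{\cM_2}-z)\cM_2({\bf 1}_{\cM_2}-z)$. Write $x_1:=px$, $x_2:=({\bf 1}_{\cM_1}-p)x$, so that $J(x)=J(x_1)+J(x_2)$. Since $J(x_1)$ and $J(x_2)$ are supported on orthogonal central projections of $\cM_2$, the cross terms in $J(x)^*J(x)$ vanish, and
\begin{align*}
|J(x)|^2 &= J(x_1)^*J(x_1)+J(x_2)^*J(x_2) \\
         &= J(x_1^*x_1)+J(x_2 x_2^*)= J\bigl(p|x|^2\bigr)+J\bigl(({\bf 1}_{\cM_1}-p)|x^*|^2\bigr),
\end{align*}
where the middle equality uses the $*$-homomorphism property on the first corner and the $*$-anti-homomorphism property on the second, and the last equality uses that $p$ is central.

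Finally, set $a:=p|x|+({\bf 1}_{\cM_1}-p)|x^*|\ge 0$. Because $p$ is a central projection orthogonal to ${\bf 1}_{\cM_1}-p$, the cross terms in $a^2$ vanish and $a^2=p|x|^2+({\bf 1}_{\cM_1}-p)|x^*|^2$. The Jordan identity $J(a^2)=J(a)^2$ together with positivity of $J$ (equation \eqref{JA>0}) then gives $|J(x)|^2=J(a^2)=J(a)^2$ with $J(a)\ge 0$, so uniqueness of the positive square root yields $|J(x)|=J(a)$, which is exactly \eqref{lemma:desofabs}. The only delicate step is the centrality of $p$; everything afterwards is a direct Peirce-type calculation.
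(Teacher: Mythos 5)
Your proof is correct, and the final computation of $|J(x)|^2$ (splitting $x$ as $px+({\bf 1}_{\cM_1}-p)x$, killing the cross terms by orthogonality of $J(p)$ and $J({\bf 1}_{\cM_1}-p)$, converting $J(px)^*J(px)$ into $J(p|x|^2)$ and the anti-homomorphic corner into $J(({\bf 1}_{\cM_1}-p)|x^*|^2)$, then taking positive square roots) is essentially identical to the paper's. Where you genuinely diverge is in how the central projection $p\in\cZ(\cM_1)$ is obtained: the paper simply quotes the decomposition of a normal Jordan $*$-monomorphism along a central projection of the \emph{domain} from the literature (St{\o}rmer's Theorem 3.3, Yeadon), whereas you derive it from Lemma \ref{2.12}, which only provides a projection $z$ central in the ultra-weak closure of the \emph{range} $J(\cM_1)$. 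Your pull-back argument is sound: by Remark \ref{isotoJM} the range is weakly (hence ultra-weakly) closed, so $z\in\cZ(J(\cM_1))$ lifts by injectivity to a projection $p$ with $J(p)=z$ (the identities $J(p)^2=J(p^2)$ and $J(p)^*=J(p^*)$ plus injectivity show $p$ is indeed a projection, a point worth one line), and your Peirce computation $J(px)=J(x)z=J(xp)$ forces $px=xp$ by injectivity. This buys a more self-contained proof that stays inside the paper's own toolkit at the cost of about a paragraph of extra bookkeeping; the paper's version is shorter but leans on an external structure theorem. Note that normality and injectivity are both genuinely used in your route (to get $z$ into $J(\cM_1)$ and to pull it back), which makes transparent why the proposition is stated for normal Jordan $*$-monomorphisms.
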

\begin{proof}

Let  $p\in \cZ(\cM_1)$ be  a projection such that $J$ is a $*$-monomorphism on $p\cM_1 p$ and a $*$-anti-monomorphism on $ ({\bf 1}_{\cM_1}-p)\cM_1 ({\bf 1}_{\cM_1}-p)$ (see e.g. \cite[Theorem 3.3]{Stormer}, see also \cite[p. 45]{Y}).
By \eqref{eq27},  $J(p)\perp J({\bf 1}_{\cM_1}-p)$.
It then follows that
for every $x\in \cM_1$, we have
 \begin{align*}
|J(x)|^2 &~= J(x)^* J(x) = (J(px)^* +J(({\bf 1}_{\cM_1}-p)x)^* )(J(px) +J(({\bf 1}_{\cM_1}-p)x)) \\
&\stackrel{\eqref{eq27}}{=} J(px)^*J(px)   +J(({\bf 1}_{\cM_1}-p)x)^*J(({\bf 1}_{\cM_1} -p)x)\\
&~=J(px^*x)   +J(({\bf 1}_{\cM_1}-p)xx^*)\\
&~=J(p|x|^2)   +J(({\bf 1}_{\cM_1}-p)|x^*|^2)\\
&\stackrel{\eqref{eq27}}{=} J(p|x| + ({\bf 1}_{\cM_1}-p)|x^*| )J(p|x| + ({\bf 1}_{\cM_1}-p)|x^*| ),
\end{align*}
which together with \eqref{JA>0} implies the validity of  \eqref{lemma:desofabs}.
\end{proof}

Jordan $*$-homomorphisms have strong connections with projection ortho-morphsims, that is,   mappings $\varphi:\cP(\cM_1)\rightarrow \cP(\cM_2)$ satisfying that $
\varphi(p)\perp \varphi(q)$ and $\varphi(p\vee q) =\varphi(p) +\varphi(q)$ for all mutually  orthogonal $p,q\in \cP(\cM_1)$ (see e.g. \cite{Dye}).
Whenever $\cM_1$ has no type $I_2$ direct summands, every the projection ortho-morphism from $\cP(\cM_1)$ into $\cP(\cM_2)$ can be extended to a Jordan $*$-homomorphism from $\cM_1$ into $\cM_2$ (see
\cite[Theorem 8.1.1]{Hamhalter}, see also \cite{Dye}).
In general, one can not expect that every ortho-morphism can be extended to a Jordan $*$-homomorphism  \cite[P.83]{Dye}.

\begin{lemma}\label{linear orthomorphism extension}
 Let $\varphi:\cM_1 \rightarrow \cM_2$ be a linear mapping which is continuous in the uniform norm topology.
 If the reduction of $\varphi$ on $\cP(
 \cM_1)$ is an ortho-homomorphism from  $\cP(\cM_1)$ into $\cP(\cM_2)$,
 then $\varphi$ is a Jordan $*$-homomorphism from  $\cM_1$ into $\cM_2$.
\end{lemma}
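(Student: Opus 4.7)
My plan is to reduce to Proposition \ref{prop:+-J}, i.e., to show that $\varphi$ is self-adjoint (equivalently, positive) and that $\varphi(x^2)=\varphi(x)^2$ for every $x\in\cM_1^+$. The ortho-homomorphism hypothesis together with norm continuity and linearity should do all the work; the mechanism will be the norm-density of finitely-valued self-adjoint operators (linear combinations of pairwise orthogonal spectral projections) in $\cM_1^{sa}$.

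First I would handle the case of finite-spectrum self-adjoints. Suppose $x=\sum_{i=1}^n \lambda_i p_i$ with $\lambda_i\in\mathbb R$ and $\{p_i\}$ pairwise orthogonal projections in $\cM_1$. By linearity, $\varphi(x)=\sum_i \lambda_i \varphi(p_i)$, which is self-adjoint because each $\varphi(p_i)$ is a projection (in particular self-adjoint) by the ortho-homomorphism assumption. Moreover $x^2=\sum_i \lambda_i^2 p_i$, and since $\varphi(p_i)\varphi(p_j)=0$ for $i\ne j$, we get
\begin{align*}
\varphi(x)^2=\sum_{i,j}\lambda_i\lambda_j\varphi(p_i)\varphi(p_j)=\sum_i \lambda_i^2\varphi(p_i)^2=\sum_i \lambda_i^2\varphi(p_i)=\varphi(x^2).
\end{align*}
In particular, if additionally $x\ge 0$, then all $\lambda_i\ge 0$ and $\varphi(x)\ge 0$.

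Next I would pass to arbitrary self-adjoint $x\in\cM_1$ via spectral approximation. By the spectral theorem there is a sequence $\{x_n\}$ of self-adjoint elements of $\cM_1$, each a finite real linear combination of pairwise orthogonal spectral projections of $x$, with $\|x_n-x\|_\infty\to 0$ and $\|x_n\|_\infty\le \|x\|_\infty$. Then $\|x_n^2-x^2\|_\infty\le \|x_n-x\|_\infty(\|x_n\|_\infty+\|x\|_\infty)\to 0$, and $\|\varphi(x_n)^2-\varphi(x)^2\|_\infty\to 0$ by uniform continuity of $\varphi$ combined with the identity $a^2-b^2=a(a-b)+(a-b)b$. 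Using the norm continuity of $\varphi$, pass to the limit in the finite-spectrum identity $\varphi(x_n^2)=\varphi(x_n)^2$ to conclude $\varphi(x^2)=\varphi(x)^2$ and $\varphi(x)^*=\varphi(x)$. When $x\ge 0$, apply the same with $y:=x^{1/2}\in\cM_1^{sa}$ to obtain $\varphi(x)=\varphi(y^2)=\varphi(y)^2\ge 0$, so $\varphi$ is positive and in particular sends self-adjoint elements to self-adjoint elements.

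Having verified $\varphi(x)^*=\varphi(x)$ for $x=x^*$ and $\varphi(x^2)=\varphi(x)^2$ for all $x\in\cM_1^+$ (indeed, for all $x\in\cM_1^{sa}$), Proposition \ref{prop:+-J} applies directly and yields that $\varphi$ is a Jordan $*$-homomorphism from $\cM_1$ into $\cM_2$. The only nontrivial step is the spectral approximation plus bilinear-continuity argument passing $\varphi(x_n)^2\to\varphi(x)^2$; but since $\{\varphi(x_n)\}$ is norm-bounded (as $\|\varphi(x_n)\|_\infty\le \|\varphi\|\cdot\|x\|_\infty$), even this is routine once the norm continuity of $\varphi$ is invoked.
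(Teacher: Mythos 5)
Your proposal is correct and follows essentially the same route as the paper: approximate by finite real linear combinations of pairwise orthogonal spectral projections, use the ortho-homomorphism hypothesis to get $\varphi(x_n^2)=\varphi(x_n)^2$ and positivity on these simple elements, pass to the limit via the uniform-norm continuity of $\varphi$, and invoke Proposition \ref{prop:+-J}. The paper works only with $0\le x$ (which suffices for Proposition \ref{prop:+-J}), whereas you treat all self-adjoint elements and deduce positivity via $x^{1/2}$ — a harmless elaboration of the same argument.
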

\begin{proof}
Let $0\le x\in \cM_1$.
Without loss of generality, we may assume that $\|x\|_\infty = 1$.
Define $x_n:=\sum_{k=1}^n \frac{k-1}{n} e^x(\frac{k-1}{n},\frac{k}{n}]$.
In particular, $\|x-x_n\|_\infty \le \frac1n$.

By linearity, we have
$$\varphi(x_n^2)=\varphi(\sum_{k=1}^n (\frac{k-1}{n})^2 e^x(\frac{k-1}{n},\frac{k}{n}])=\sum_{k=1}^n (\frac{k-1}{n})^2  \varphi( e^x(\frac{k-1}{n},\frac{k}{n}]) .$$
Since $\varphi|_{\cP(\cM_1)}$ is an ortho-homomorphism from $\cP(\cM_1)$ into $\cP(\cM_2)$, it follows that
$$\left(\varphi(x_n)\right)^2 = \left(\sum_{k=1}^n \frac{k-1}{n}  \varphi( e^x(\frac{k-1}{n},\frac{k}{n}]) \right)^2 = \sum_{k=1}^n (\frac{k-1}{n})^2  \varphi( e^x(\frac{k-1}{n},\frac{k}{n}])=\varphi(x_n^2) .$$
Since $\|x_n-x\|_\infty \rightarrow _n 0$, it follows from  the $\left\|\cdot\right\|_\infty$-continuity of $\varphi$ that $\varphi(x^2) =\left(\varphi(x)\right)^2$.
Moreover, since $\varphi(x_n)\ge 0$, it follows from the $\left\|\cdot\right\|_\infty$-continuity of $\varphi$ that $\varphi(x)\ge 0$.
By  Proposition \ref{prop:+-J}, $J$ is a Jordan $*$-homomorphism.
\end{proof}

\section{A noncommutative Abramovich's theorem}
The main object of this section is to establish a noncommutative version of Abramovich's theorem \cite{A1983}.
 Instead of considering isometries only (see e.g. \cite{Y,SV}), we provide a description to more general operators, i.e.,    order-preserving linear  operators which  are order-continuous (or normal) and disjointness preserving.
In particular, we show that if such a  operator is a bijection (in particular, it is  a \emph{Lamperti operator} \cite{Arendt}), then it is  a \emph{d-isomorphism } (i.e., both $T$ and $T^{-1}$ preserve disjointness \cite{AK1}).



Assume that $E(\cM_1,\tau_1)$ and $F(\cM_2,\tau_2)$ are   symmetrically $\Delta$-normed operator spaces affiliated with semifinite von Neumann algebras $(\cM_1,\tau_1)$ and $(\cM_2,\tau_2)$, respectively.
The main purpose of this section is to
 describe the general form of order-preserving operator from $E(\cM_1,\tau_1)$ into  $F(\cM_2,\tau_2)$.
 The case of semifinite von Neumann algebras is more complicated than that of finite von Neumann algebras because of the possible absence of the identity in  the  symmetric space $E(\cM_1,\tau_1)$.
The following is quoted from \cite[Section 5.2]{SV}:``the fact that the unit element $1$ in $\cM$ does not belong to any symmetric space $E(\cM,\tau)$ with order continuous norm when $\tau(1)=\infty$ presents additional technical obstacles''.
In
 this section, we completely resolve the case when $E(\cM,\tau)$ has order continuous $\Delta$-norm,  which is left unanswered in
 \cite{SV}.

It is known that for two von Neumann algebras $\cM_1,\cM_2$ and $p>0$, $p\ne 2$,   $\cM_1$ and $\cM_2$ are Jordan $*$-isomorphic  if and only if $L_p(\cM_1,\tau_1)$ are $L_p(\cM_2,\tau_2)$ are isometrically isomorphic \cite{Sherman, Sherman2005}.
In this section,  we prove  that is,  if
 there exists an order-preserving  isometry from $E(\cM_1,\tau_1)$ into $F(\cM_2,\tau_2)$,  then  there exists a Jordan $*$-homomorphism from  $\cM_1$ into $\cM_2$.

For the sake of convenience, we denote  $$(\cM_1)_{fin}:=\{x\in \cM_1 \mid \tau_1(s(x))<\infty\}$$ and $$(\cM_2)_{fin}:=\{x\in \cM_2 \mid \tau_2(s(x))<\infty\} .$$
We always  assume that
the carrier projections of the symmetrically $\Delta$-normed spaces considered in this section  are ${\bf 1 }$.
By Proposition \ref{prop:car}, it is easy to see that $\cP(E)=\cP_{fin} (\cM_1): =(\cM_1)_{fin} \cap \cP(\cM_1)$ whenever $E(\cM_1,\tau_1)\subset S_0(\cM_1,\tau_1)$ and $c_E ={\bf 1_{\cM_1}}$.

If $\tau_1({\bf 1}_{\cM_1})<\infty$, the isometries from one symmetrically normed space into  another are widely studied.
Since ${\bf 1}_{\cM_1}\in E(\cM_1,\tau_1)$,
one can obtain  an  explicit form of the isometry $T:E(\cM_1,\tau_1)\rightarrow F(
\cM_2,\tau_2)$ directly (see e.g. \cite{SV,CMS,Y}).
For convenience of the reader, we present a proof for $\Delta$-normed case, which extends \cite[Proposition 6]{SV} to the case of general order-preserving operators.
We also rectify an oversight in the proof of \cite[Proposition 6]{SV}, where $T({\bf 1}_{\cM_1})^{-1}$ was asserted to be a $\tau_2$-measurable operator,
which requires additional arguments when $(\cM_2,\tau_2)$ is infinite  while is correct for a finite von Neumann algebra $(\cM_2,\tau_2)$.
 The latter matter though only affects the form of the
 Jordan $*$-homomorphisms and not  the result claimed in that proposition.


\begin{theorem}\label{th:fin}
Let $(\cM_1,\tau_1)$ be a finite von Neumann algebra having a faithful normal finite trace $\tau_1$ and $(\cM_2,\tau_2)$ be an arbitrary semifinite von Neumann algebra.
Assume that $E(\cM_1,\tau_1)$ and $F(\cM_2,\tau_2)$ are symmetrically $\Delta$-normed operator spaces.
If there exists an order-preserving linear operator  $T:E(\cM_1,\tau_1)\stackrel{into}{\longrightarrow} F(\cM_2,\tau_2)$ which is disjointness preserving,
then  there exists a  Jordan $*$-mono\-morphism $J:\mathcal{M}_1\to\mathcal{M}_2$.
Furthermore, $T({\bf 1}_{\cM_1})$ commutes with  $T(x)$ and $ J(x)$ for all  $x\in \cM_1$, and
\begin{align}\label{eq:Tfin}
T({\bf 1}_{\cM_1}) J(x) =T(x), ~x \in \cM_1.
\end{align}
Moreover, if $T$ is normal (i.e.   $\sup T(x_i)=T(x)$ for any increasing net $x_i\in E(\cM_1,\tau_1)^+$ with $\sup x_i= x\in E(\cM_1,\tau_1)$),
%
 then $J$ is a normal Jordan $*$-isomorphism onto a  weakly closed $*$-subalgebra of   $\cM_2$.
\end{theorem}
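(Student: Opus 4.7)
The plan is to extract from $T$ an ortho-homomorphism $\varphi\colon\cP(\cM_1)\to\cP(\cM_2)$ on projections, extend it linearly to the desired $J$ via Lemma \ref{linear orthomorphism extension}, and then propagate this to the full identity $T=aJ=Ja$ with $a:=T({\bf 1}_{\cM_1})$. Because $\tau_1({\bf 1}_{\cM_1})<\infty$, Proposition \ref{prop:car} forces ${\bf 1}_{\cM_1}\in E(\cM_1,\tau_1)$, and the bound $\mu(x)\le \|x\|_\infty\,\mu({\bf 1}_{\cM_1})$ gives a continuous inclusion $\cM_1\hookrightarrow E(\cM_1,\tau_1)$ in the uniform norm. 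This is what will let us transfer continuity statements between $\|\cdot\|_\infty$ and $\|\cdot\|_E$.

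For the ortho-homomorphism step, given $p\in\cP(\cM_1)$, disjointness-preservation applied to $p\perp p^\perp$ produces positives $T(p),T(p^\perp)\ge 0$ with $T(p)T(p^\perp)=0$, while linearity gives $T(p)+T(p^\perp)=a$. Setting $\varphi(p):=s(T(p))$, the fact that positive operators with product zero have orthogonal supports yields $\varphi(p)\perp\varphi(p^\perp)$ and $\varphi(p)+\varphi(p^\perp)=s(a)$; multiplying $a=T(p)+T(p^\perp)$ on the right by $\varphi(p)$ gives the key identity
\begin{equation*}
T(p)=a\varphi(p)=\varphi(p)a, \qquad p\in\cP(\cM_1),
\end{equation*}
in particular $a$ commutes with every $\varphi(p)$. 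Applying the same argument to orthogonal $p\perp q$ shows $\varphi(p+q)=\varphi(p)+\varphi(q)$, so $\varphi$ is a projection ortho-homomorphism (defined on all of $\cP(\cM_1)=\cP_{fin}(\cM_1)=\cP(E)$).

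To build $J$, define it on simple self-adjoint $x=\sum_i\lambda_i e_i\in\cM_1$ by $J(x):=\sum_i\lambda_i\varphi(e_i)$; orthogonality of $\{\varphi(e_i)\}$ gives $\|J(x)\|_\infty\le\|x\|_\infty$, so $J$ extends by $\|\cdot\|_\infty$-continuity to self-adjoint $\cM_1$ via spectral approximation, and then complex-linearly to $\cM_1$. By Lemma \ref{linear orthomorphism extension}, $J$ is a Jordan $*$-homomorphism. The identity $T(p)=aJ(p)$ for projections, combined with linearity, the $\|\cdot\|_\infty$-to-$\|\cdot\|_E$ continuity of $\cM_1\hookrightarrow E$, the $\Delta$-isometry property of $T$, and the estimate \eqref{ineq:to0} applied to $a$, propagate to $T(x)=aJ(x)=J(x)a$ for all $x\in\cM_1$. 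Injectivity is then automatic: for $0\ne p\in\cP(\cM_1)$ the isometry property forces $T(p)\ne 0$, hence $J(p)=\varphi(p)\ne 0$, and Proposition \ref{prop:fai} concludes.

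For the order-continuous case, I will first prove normality of $\varphi$ on $\cP(\cM_1)$. If $p_\alpha\uparrow p$, then the ortho-homomorphism property gives $\varphi(p_\alpha)\le \varphi(p)$, so $q:=\sup_\alpha\varphi(p_\alpha)\le\varphi(p)$ exists and $a\varphi(p_\alpha)\uparrow aq$ strongly (using $a\varphi(p_\alpha)=\varphi(p_\alpha)a$). Order continuity gives $\|T(p)-T(p_\alpha)\|_F\to 0$, hence $a\varphi(p_\alpha)\to a\varphi(p)$ in the local measure topology via \cite[Lemma 2.4]{HLS2017}, forcing $aq=a\varphi(p)$. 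Since $\varphi(p)-q\le \varphi(p)\le s(a)$ and $a(\varphi(p)-q)=0$, we must have $\varphi(p)=q$. Upgrading via spectral approximation (and the identity $J({\bf 1}_{\cM_1})=s(a)$, which bounds $J(x)\le\|x\|_\infty s(a)$) yields normality of $J$ on $\cM_1^+$, and Remark \ref{isotoJM} then delivers the weakly closed $*$-subalgebra conclusion. The main obstacle is precisely this normality step: the local-measure convergence coming from $F$-norm convergence must be combined with the strong-operator monotone-sup of $\{\varphi(p_\alpha)\}$, and one needs the bound $\varphi(p)\le s(a)$ to cancel $a$ in $aq=a\varphi(p)$; without the commutation $a\varphi(p)=\varphi(p)a$ and without finiteness of $\tau_1$ (which forces ${\bf 1}_{\cM_1}\in E$ and hence $J({\bf 1}_{\cM_1})=s(a)$), neither step goes through.
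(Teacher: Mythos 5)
Your construction of $J$ has a genuine gap at its central step: the linearity of $J$. You define $J$ on each simple self-adjoint element through its own orthogonal decomposition, extend to each self-adjoint $x$ by spectral approximation inside the commutative algebra generated by $x$, and then invoke Lemma \ref{linear orthomorphism extension} --- but that lemma requires $J$ to already be a \emph{linear} map on $\cM_1$, and additivity $J(x+y)=J(x)+J(y)$ for non-commuting $x,y$ is exactly what your functional-calculus definition does not provide (the spectral projections of $x+y$ bear no relation to those of $x$ and $y$). This is not a routine detail: it is the reason the paper warns that a projection ortho-morphism need not extend to a Jordan $*$-homomorphism in general (cf.\ the remark before Lemma \ref{linear orthomorphism extension} and Dye's counterexample for type $I_2$), and it is precisely why the paper builds $J$ by a different route, namely $J_n(x):=(e_nT({\bf 1}_{\cM_1})e_n)^{-1}e_nT(x)e_n$ with $e_n=e^{T({\bf 1}_{\cM_1})}(\tfrac1n,\infty)$, so that linearity is inherited from $T$ for free and only the multiplicative (Jordan) property on projections has to be checked afterwards. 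Your map coincides with the paper's on projections (both are $e\mapsto s(T(e))$), but your argument establishes it as a linear map only on each maximal commutative piece.

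The gap is repairable with ingredients you already have, but only by reversing your order of operations: first prove $T(x)=aJ(x)$ for each \emph{individual} self-adjoint $x$ (by approximating $x$ in $\|\cdot\|_\infty$, using $T(x_n)=aJ(x_n)$ on the commutative algebra of $x$, \eqref{ineq:to0} and the isometry property to pass to the limit in $\|\cdot\|_F$), then deduce $a\bigl(J(x+y)-J(x)-J(y)\bigr)=0$ from linearity of $T$, and finally cancel $a$ using $J(z)=s(a)J(z)s(a)$ (which follows from $\varphi(p)=s(T(p))\le s(a)$) together with the fact that a self-adjoint $w$ in the corner $s(a)\cM_2 s(a)$ with $aw=0$ vanishes. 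As written, you apply Lemma \ref{linear orthomorphism extension} before $T=aJ$ is available, so the argument is circular at that point. A second, smaller soft spot: in the order-continuous case you pass from normality of $\varphi$ on $\cP(\cM_1)$ to normality of $J$ on $\cM_1^+$ ``via spectral approximation,'' but a monotone net $x_\alpha\uparrow x$ does not yield monotone convergence of spectral projections, so this upgrade needs either the complete-additivity-implies-normality theorem for positive maps or the paper's direct argument via $\|T({\bf 1}_{\cM_1})(J(x)-J(x_\alpha))\|_F\to 0$ and Lemma \ref{cor:cone}.
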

\begin{proof}
Since $\tau_1({\bf 1}_{\cM_1})<\infty$, it follows from Proposition \ref{prop:car} that ${\bf 1}_{\cM_1} \in E(\cM_1,\tau_1)$.

Let $e\in \cP(\cM_1)$.
The disjointness preserving property of $T$ implies that $$[T(e),T({\bf 1}_{\cM_1})] = [T(e),T(e)]+[T(e),T({\bf 1}_{\cM_1} -e)] =0 .$$
In particular, $T({\bf 1}_{\cM_1})$ commutes with $T(e)$, $e \in \cP(\cM_1)$.
In other words, $T(e)$ commutes with every spectral projection of $T({\bf 1}_{\cM_1})$.

Let    $r:=s(T({\bf 1}_{\cM_1}))$.
Since $T$ is order-preserving, it follows that
\begin{align}\label{TXX1}
0\le T(x)\le \|x\|_\infty T({\bf 1}_{\cM_1}), ~0\le x\in \cM_1.\end{align}
Therefore,
$$0\le ( {\bf 1}_{\cM_2}  -r) T(x)( {\bf 1}_{\cM_2}  -r) \le \|X\|_\infty ( {\bf 1}_{\cM_2} -r ) T({\bf 1}_{\cM_1}) ( {\bf 1}_{\cM_2} -r) =0. $$
In other words, $T(x)( {\bf 1}_{\cM_2} -r)=0$, which implies that $T(x)$ is affiliated with the reduced von Neumann algebra $r \cM_2 r$ of $\cM_2$.
Without loss of generality, we assume that $r=s(T({\bf 1}_{\cM_1}))={\bf 1}_{\cM_2}$.

Fix $0\le x \in \cM_1$, $\|x \|_\infty \le 1$.
Set $x _n:= \sum_{k=1}^{n} \frac{k-1}{n}e ^{x}(\frac{k-1}{n}, \frac k n]$, $n\ge 1$.
In particular, $\|x_n-x\|_\infty \rightarrow_n 0$.
Since $T$ preserves order, it follows that
$$\left\|T(x_n)-T(x) \right\|_F = \left\|T(x_n -x)\right\|_F \le \Big\|\left\|x_n -x \right\|_\infty T({\bf 1}_{\cM_1})\Big\|_F  \stackrel{\eqref{ineq:to0}}{\rightarrow} 0$$
and therefore, by \cite[Lemma 2.4]{HLS2017}, $T(x_n)\rightarrow_n T(x)$ in measure topology.



Since $T({\bf 1}_{\cM_1})$ commutes with $T(e)$, $e \in \cP(\cM_1)$, it immediately follows from the definition of $x _n$ that $T({\bf 1}_{\cM_1})$ commutes with $T(x _n)$.
By the preceding paragraph,
$$[T(x_n), T({\bf 1}_{\cM_1})]\rightarrow [T(x ), T({\bf 1}_{\cM_1})]$$ in measure topology (see e.g. \cite{DP2,FK}).
Hence, $T({\bf 1}_{\cM_1})$ commutes with $T(x)$ for all $x\in \cM_1$.

Let $e_n: =E^{ T({\bf 1}_{\cM_1})}(\frac1n,\infty)$, $n>0$.
In particular, $e_n\rightarrow s(T({\bf 1}_{\cM_1})) ={\bf 1}_{\cM_2}$ as $n\rightarrow \infty$.
Now, we have $[e_n T(x)e_n, e_n T({\bf 1}_{\cM_1})e_n] =0$,  $n>0$, $x\in \cM_1$.
For every $n$, we define $J_n$ by setting
\begin{align}\label{eq:defJn}
J_n(x):= (e_nT({\bf 1}_{\cM_1})e_n)^{-1} e_n T(x) e_n , ~x\in \cM_1.
\end{align}
 Here, $(e_nT({\bf 1}_{\cM_1})e_n)^{-1}$ is taken from  the algebra $e_n\cM_2e_n$.
Note that $  (e_nT({\bf 1}_{\cM_1})e_n)^{-1}$  commutes with $T(x)$, $x\in \cM_1$.
By  \eqref{TXX1}, we obtain  that $0\le e_n T(x) e_n \le \|x\|_\infty e_n T({\bf 1}_{\cM_1})e_n  $.
Hence,  \begin{align}\label{boundede}
0\le (e_n T({\bf 1}_{\cM_1})e_n)^{-1/2} e_n T(x) e_n (e_n T({\bf 1}_{\cM_1})e_n)^{-1/2} \le \|x\|_\infty .
\end{align}
That is, $J_n(x) \le \|x\|_\infty$.
Moreover, since $T({\bf 1}_{\cM_1})$ commutes with $T(x)$, and $e_k$, $k\ge 1$, is the a spectral projection of $T({\bf 1}_{\cM_1})$,
 it follows that for every $m\ge n$,
\begin{align}\label{Jmn}
J_m(x) e_n&=(e_mT({\bf 1}_{\cM_1})e_m)^{-1} e_m T(x) e_m e_n \nonumber \\ &= (e_nT({\bf 1}_{\cM_1})e_n)^{-1} e_n T(x) e_n  =  J_n(x). \end{align}
Hence, $\{J_n(x)\}_n$ converges in the strong operator topology.
 Define \begin{align}
 \label{eq:defJ}
 J(x):= (so)-\lim_n J_n(x) \in \cM_2 .
 \end{align}
  Clearly, $J$ is a complex-linear mapping.
Since $T({\bf 1}_{\cM_1})$ commutes with $J_n(x)$, it follows that $T({\bf 1}_{\cM_1})$ commutes with $J(x)$, $x\in \cM_1$.
Since every  $\|J_n(x)\|_\infty \stackrel{\eqref{boundede}}{\le } \|x\|_\infty$, $x\in \cM_1$, for every $n>0$ and $J(x)$ is the (so)-limit of $\{J_n(x)\}$,   it follows that  $J $ is a bounded mapping with $\|J(x)\|_\infty \le \|x\|_\infty$, $x\in \cM_1$.


For every $0\ne e\in \cP(\cM_1)$, since $T({\bf 1}_{\cM_1}-e)T(e)=0$, it follows that
 $T({\bf 1}_{\cM_1})s(T(e)) =(T({\bf 1}_{\cM_1}-e)+T(e) )s(T(e))  =T(e)$.
Hence,
we have
\begin{align}\label{JE=STEN}
J_n(e) =(e_n T({\bf 1}_{\cM_1})e_n)^{-1} e_n T(e) e_n &= (e_n T({\bf 1}_{\cM_1}) e_n )^{-1}  e_n
T({\bf 1}_{\cM_1})  e_n s(T(e)) \nonumber \\
&  =e_ns(T(e )) .
\end{align}
Taking $n\rightarrow \infty$, we obtain that
\begin{align}\label{JESTE}
 J(e) = s(T(e))>0.
\end{align}
By the disjointness preserving property of $T$, it is easy to see that $J$ is an ortho-homomorphism from $\cP(\cM_1)$ to $\cP(\cM_2)$.
By Lemma \ref{linear orthomorphism extension}, $J$ is  a Jordan $*$-homomorphism.
Moreover,
Proposition \ref{prop:fai} implies that $J$ is a Jordan $*$-monomorphism.

By \eqref{Jmn}, we have $J(x)e_n  =J_n (x), ~x\in \cM_1 $, $n>0$.
Hence,   we have $$T({\bf 1}_{\cM_1})  J(x)e_n =e_nT({\bf 1}_{\cM_1})e_n J(x)e_n =e_nT({\bf 1}_{\cM_1})e_n J_n(x)\stackrel{\eqref{eq:defJn}}{=}   T(x) e_n, ~x\in \cM_1.  $$
Since $e_n\rightarrow {\bf 1}_{\cM_2} $ in the local measure topology (see e.g. \cite[Proposition 2.(v)]{DP2}), it follows that $T({\bf 1}_{\cM_1})  J(x)=T(x)$ (see page \pageref{lmtc}), which proves the validity of \eqref{eq:Tfin}.

Now, assume that $T$ is normal $\{x_\alpha \}$ is an increasing net such that
 $  x_\alpha \uparrow_\alpha  x \in \cM_1^+$.
Since $x_\alpha \uparrow x$, it follows that
\begin{align}\label{eq:Fto0}
T({\bf 1}_{\cM_1} ) J(x) \stackrel{\eqref{eq:Tfin}}{=}  T(x)=\sup _\alpha T(x_\alpha)\stackrel{\eqref{eq:Tfin}}{=} \sup _\alpha  T({\bf 1}_{\cM_1} ) J(x_\alpha )
\end{align}
Since $ T({\bf 1}_{\cM_1} ) $ commutes with $J(x) $ and $J(x_\alpha)$,
it follows from \eqref{eq:Fto0}  
that
\begin{align}\label{ineq:T1TO0}
T({\bf 1}_{\cM_1} )^{\frac12}  J(x)T ({\bf 1}_{\cM_1} )^{\frac12}  =\sup _\alpha T({\bf 1}_{\cM_1} )^{\frac12}J(x_\alpha)T({\bf 1}_{\cM_1} )^{\frac12} .
\end{align}
Since Jordan $*$-homomorphism preserves order (see \eqref{JA>0}),
it follows from  \cite[Proposition 1]{DP2} that
\begin{align*}
T({\bf 1} _{\cM_1})^{1/2}  \sup_\alpha J(x_\alpha)T({\bf 1}_{\cM_1} )^{1/2}& = \sup_\alpha T({\bf 1} _{\cM_1})^{1/2}   J(x_\alpha)T({\bf 1}_{\cM_1} )^{1/2} \\&\stackrel{\eqref{ineq:T1TO0}}{=}  T({\bf 1} _{\cM_1})^{1/2} J(x) T({\bf 1}_{\cM_1} )^{1/2}.
\end{align*}
  Multiplying by $(e_n T({\bf 1}_{\cM_1})e_n)^{-1/2}$ on both sides, we obtain that $e_n  \sup_\alpha J(x_\alpha) e_n =e_n J(x) e_n$.
  Taking $n\rightarrow \infty$, we have $$\sup_\alpha  J(x_\alpha)  =J(x).$$
  That is, $J$ is normal.
The last assertion follows from Remark \ref{isotoJM}.
\end{proof}

\begin{remark}\label{remark:ordtonormal}
If $E(\cM_1,\tau_1)$ has order continuous $\Delta$-norm ($\tau_1$ is possibly infinite), then every    order-preserving  ($\left\|\cdot\right\|_E-\left\|\cdot \right\|_F$)-continuous operator   $T:E(\cM_1,\tau_1)\to F(\cM_2,\tau_2)$ is normal.
Indeed,  assume that  $\{x_\alpha \}$ is an increasing net such that
 $  x_\alpha \uparrow_\alpha  x \in E(\cM_1,\tau_1)^+$.
Since $x_\alpha \uparrow x$, it follows that $ \left\|x -x_\alpha\right\|_E \rightarrow 0$, and therefore,
$$
\left\|T(x) -T(x_\alpha)\right\|_F\rightarrow 0 .
$$
Since $T$ preserves order, it follows from Lemma \ref{cor:cone}
that $T(x_\alpha)\uparrow T(x)$.
\end{remark}

\begin{remark}
Yeadon's proof \cite{Y} (see also \cite[Theorem 3.1]{JRS}) provides an alternative construction of the Jordan $*$-isomorphism which coincides with that given in Theorem \ref{th:fin}.
By the ``disjointness preserving'' property of the $L_p$-isometry, $1\le p<\infty$, $p\ne 2$,  one can construct  a projection  ortho-morphism $\phi$ on $\cP_{fin}(\cM_1)$ by defining $\phi(e):=s(T(e))$, $e\in \cP_{fin}(\cM_1)$.
This  ortho-morphism can be extended to a   Jordan $*$-monomorphism.\end{remark}

Now, we can  use  Lemma \ref{familyofF}   to prove the semifinite case.

\begin{theorem}\label{th:J*}
 Assume that $E(\cM_1,\tau_1)$ and $F(\cM_2,\tau_2)$ are symmetrically $\Delta$-normed operator spaces.
If there exists a normal order-preserving operator $T:E(\cM_1,\tau_1)\stackrel{into}{\longrightarrow} F(\cM_2,\tau_2)$ which is disjointness preserving,
then there exists a normal Jordan $*$-isomorphism $J$ from $\cM_1$ onto a weakly closed $*$-subalgebra of   $\cM_2$ such that    $T(e)J(x)=T(x)$ for every $X\in e\cM_1 e$, $e\in \cP_{fin}(\cM_1)$.
\end{theorem}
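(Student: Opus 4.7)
The plan is to reduce the semifinite case to the finite case (Theorem \ref{th:fin}) by passing to reduced algebras, and then glue the resulting Jordan $*$-monomorphisms into a single global one via Lemma \ref{familyofF}. Concretely, for each $e\in\cP_{fin}(\cM_1)$, the reduced algebra $e\cM_1 e$ is a finite von Neumann algebra with the restricted trace $\tau_1|_{e\cM_1 e}$, and the subspace $\{x\in E(\cM_1,\tau_1):exe=x\}$ is a symmetrically $\Delta$-normed space affiliated to it, inheriting order continuity from $E(\cM_1,\tau_1)$. The restriction of $T$ to this subspace is still an order-preserving, disjointness-preserving isometry into $F(\cM_2,\tau_2)$, and Theorem \ref{th:fin} applied in the setting with identity $e$ therefore produces a normal Jordan $*$-monomorphism $J_e:e\cM_1 e\to\cM_2$ satisfying $T(e)J_e(x)=T(x)$ for $x\in e\cM_1 e$, with $T(e)$ commuting with $J_e(x)$ and $J_e(e\cM_1 e)\subset s(T(e))\cM_2 s(T(e))$, and with $J_e(e)=s(T(e))$.

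The main step is to verify the compatibility hypothesis of Lemma \ref{familyofF}, i.e.\ that $J_f|_{e\cM_1 e}=J_e$ whenever $e\le f$ in $\cP_{fin}(\cM_1)$. Since $T$ preserves disjointness, $T(e)T(f-e)=0$, so $s(T(f))=s(T(e))+s(T(f-e))$ with orthogonal summands, and applying $T(f)J_f(\cdot)=T(\cdot)$ to $x=e$ gives $T(f)J_f(e)=T(e)$; combined with the commutation $[T(f),J_f(e)]=0$ (from Theorem \ref{th:fin}) and a support-projection comparison, this forces $J_f(e)=s(T(e))$. For $x\in e\cM_1 e$, the Jordan triple identity $J_f(exe)=J_f(e)J_f(x)J_f(e)$ places $J_f(x)\in s(T(e))\cM_2 s(T(e))$, so
\[
T(x)=T(f)J_f(x)=T(f)s(T(e))J_f(x)=T(e)J_f(x),
\]
which coincides with $T(e)J_e(x)$. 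Since $T(e)$ has full support in the reduced algebra $s(T(e))\cM_2 s(T(e))$, left-multiplication by spectral approximations $(e_n T(e)e_n)^{-1}$ (with $e_n=E^{T(e)}(1/n,\infty)\uparrow s(T(e))$) is invertible on each $e_n$, and passing to the strong-operator limit as in the proof of Theorem \ref{th:fin} yields $J_e(x)=J_f(x)$.

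With compatibility in hand, Lemma \ref{familyofF} applied to the family $\{J_e\}_{e\in\cP_{fin}(\cM_1)}$ produces a normal Jordan $*$-homomorphism $J:\cM_1\to\cM_2$ with $J|_{e\cM_1 e}=J_e$ for every $e\in\cP_{fin}(\cM_1)$. Because each $J_e$ is injective, the last clause of Lemma \ref{familyofF} (together with Remark \ref{isotoJM}) ensures that $J$ is a normal Jordan $*$-isomorphism from $\cM_1$ onto the weakly closed $*$-subalgebra $J(\cM_1)\subset\cM_2$. The concluding identity $T(e)J(x)=T(x)$ for $x\in e\cM_1 e$ is then immediate from $J|_{e\cM_1 e}=J_e$ and the corresponding identity for $J_e$.

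The main obstacle is the coherence step $J_f|_{e\cM_1 e}=J_e$: both maps are constructed as a kind of ``quotient by $T(\cdot)$'' on reduced algebras, and one has to track supports carefully (using disjointness preservation, the identity $T(f)s(T(e))=T(e)$, and the Jordan triple formula) to see that the two constructions agree. Once this is pinned down, Theorem \ref{th:fin} and Lemma \ref{familyofF} do all the remaining work.
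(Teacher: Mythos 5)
Your proposal is correct and follows essentially the same route as the paper: apply Theorem \ref{th:fin} on each reduced algebra $e\cM_1 e$, $e\in\cP_{fin}(\cM_1)$, check that the resulting $J_e$ are coherent, and glue them with Lemma \ref{familyofF}. The only (minor) divergence is in the coherence step, where the paper deduces $J_f=J_e$ on $e\cM_1 e$ from the identity $J_f(q)=s(T(q))=J_e(q)$ for all projections $q\le e$ together with normality, whereas you cancel $T(e)$ from $T(e)J_f(x)=T(x)=T(e)J_e(x)$ via the spectral approximants $(e_nT(e)e_n)^{-1}$; both arguments are valid.
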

\begin{proof}

If $\tau_1({\bf 1}_{\cM_1})<\infty$, then the assertion follows from Theorem \ref{th:fin}.

If $\tau_1({\bf 1}_{\cM_1})=\infty$,
then
we define a normal Jordan $*$-homomorphism $J_e$ on the reduced algebra $e\cM_1 e$, $e\in\cP_{fin}(\cM_1)$,
as in Theorem \ref{th:fin}.
If $e\le f \in \cP_{fin}(\cM_1)$,
then
$ J_f(e)\stackrel{\eqref{JESTE}}{=}s(T(e))$.
Let $0\le x\in e\cM_1 e$.
Set $x _n:= \sum_{k=1}^{n} \frac{k-1}{n}e ^{x}(\frac{k-1}{n}, \frac k n]$, $n\ge 1$.
It follows from the linearity and disjointness-preserving property that  $J_f(x_n) =J_e (x_n)$ for any $n \ge 1$.
By the normality of $J_e$ and $J_f$,
 we have
\begin{align*}J_f(x)=J_e(x) , ~0\le x\in e\cM_1 e,
\end{align*}
which implies
that  $J_f$ coincides with $J_e$  on $e\cM_1 e $.
 By Lemma \ref{familyofF}, we complete the proof.
\end{proof}

\begin{remark}\label{TXJ1E0}

For every $0\le x\in  E(\cM_1,\tau_1)$, we have $s(T(x)) = J(s(x))$.
Indeed,
assume that $\{x_\alpha\in  (\cM_1)_{fin}\}$ be an upwards directed net increasing to $x\in E(\cM,\tau)^+$ (see \cite[Proposition 1]{DP2} or \cite{DPS}).
We have
$$T(x_\alpha )\stackrel{\eqref{eq:Tfin}}{=}T(s(x_\alpha)) J(x_\alpha)\stackrel{\eqref{eq27}}{=} T(s(x_\alpha))J(x_\alpha )J(s(x_\alpha)),$$
which implies that $s(T(x_\alpha))\le   J(s(x_\alpha))\le J(s(x))$.
Note that $T(x_\alpha)\uparrow T(x)$ (by the normality of $T$).
This implies, in particular that, $s(T(x)) \le J(s(x))$.

On the other hand, $T(x) \ge  T(x_\alpha)$, which implies that $$s(T(x)) \ge s(T(x_\alpha)) \ge s(T(\frac 1n e^{x_\alpha}(\frac1n,\infty ))) \stackrel{\eqref{JESTE}}{=} J(e^{x_\alpha}(\frac1n,\infty )) .$$
Hence, taking $n\rightarrow \infty$, by the normality of $J$,
we have $s(T(x)) \ge J(s(x_\alpha )) $.
 Taking the (so)-limit of $J(s(x_\alpha))$, by the normality of $J$, we obtain that  $s(T(x)) = J(s(x))$.
\end{remark}

The techniques used in Yeadon's  proof of \cite[Theorem 2]{Y} rely on the fine properties of $L_p$-norms.
However,  for general symmetrically $\Delta$-normed spaces, we do not have explicit descriptions of the $\Delta$-norms, which was the main obstacle we encountered.
In Theorem \ref{cor:BJX} below, we  use  an approach different from that used in \cite{Y} to describe disjointness preserving order-preserving operators from general symmetrically  $\Delta$-normed space into another,
 which unifies  and extends  the results in \cite{SV,Broise,CMS}.
In particular, we establish a noncommutative version of Abramovich's theorem \cite{A1983}.

\begin{theorem}\label{cor:BJX} Assume that $E(\cM_1,\tau_1)$ and $F(\cM_2,\tau_2)$ are symmetrically $\Delta$-normed operator spaces.
If there exists a normal order-preserving operator $T:E(\cM_1,\tau_1)\stackrel{into}{\longrightarrow} F(\cM_2,\tau_2)$ which is disjointness preserving,
then there exist  a (possibly unbounded) positive self-adjoint $B$ affiliated with $\cM_2$ and a normal Jordan $*$-isomorphism $J$ from $\cM_1$ onto a weakly closed $*$-subalgebra of   $\cM_2$  such that
\begin{align}\label{main_ineq}
T(x) = B J(x), ~x \in E(\cM_1,\tau_1)\cap \cM_1.
\end{align}
\end{theorem}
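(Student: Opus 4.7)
The plan is to invoke Theorem \ref{th:J*} to produce the Jordan $*$-isomorphism $J$ and the localized identity $T(y)=T(e)J(y)$ for $y\in e\cM_1 e$, then to assemble the family $\{T(e)\}_{e\in\cP_{fin}(\cM_1)}$ into a single (possibly unbounded) positive self-adjoint operator $B$ affiliated with $\cM_2$, and finally to pass from the localized identity to the global formula on $E\cap\cM_1$ by an order-continuity approximation.

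First I analyse the family $\{T(e)\}_{e\in\cP_{fin}(\cM_1)}$. Theorem \ref{th:J*} produces a normal Jordan $*$-isomorphism $J$ of $\cM_1$ onto a weakly closed $*$-subalgebra of $\cM_2$ with $T(y)=T(e)J(y)$ for $y\in e\cM_1 e$ and $e\in\cP_{fin}(\cM_1)$; in particular $T(e)=T(e)J(e)$, and by Remark \ref{TXJ1E0}, $s(T(e))=J(e)$. For $e\leq f$ in $\cP_{fin}(\cM_1)$, linearity and the disjointness preserving property give $T(f)=T(e)+T(f-e)$ with supports $J(e)$ and $J(f)-J(e)$ mutually orthogonal; hence $T(e)=J(e)T(f)J(e)$, the net $\{T(e)\}$ is increasing, and $T(f)\xi=T(e)\xi$ for every $\xi\in J(e)\cH$. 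Fixing a chain $\{e_\alpha\}\subset\cP_{fin}(\cM_1)$ with $e_\alpha\uparrow\mathbf{1}_{\cM_1}$ (which exists by semifiniteness of $\cM_1$), orthogonality of supports for $\alpha\leq\beta$ yields $T(e_\alpha)T(e_\beta)=T(e_\alpha)^2=T(e_\beta)T(e_\alpha)$; hence the $T(e_\alpha)$ pairwise commute and generate a commutative von Neumann subalgebra $\cN\subset\cM_2$.

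Second I construct $B$ inside $\cN$. The $T(e_\alpha)$ form an increasing net of positive $\tau_2$-measurable operators affiliated with the commutative algebra $\cN$, so classical spectral theory on $\cN$ produces a positive self-adjoint operator $B$ affiliated with $\cN$ (hence with $\cM_2$) whose spectral projections satisfy $e^B((\lambda,\infty))=\sup_\alpha e^{T(e_\alpha)}((\lambda,\infty))$ for every $\lambda>0$. Setting $P:=J(\mathbf{1}_{\cM_1})=\sup_\alpha J(e_\alpha)$ (a projection in $\cM_2$ by normality of $J$), $B$ is supported on $P$. For an arbitrary $e\in\cP_{fin}(\cM_1)$, pick $\alpha_0$ with $e\leq e_{\alpha_0}$; for $\alpha\geq\alpha_0$, orthogonality of supports gives $T(e_\alpha)J(e)=T(e)+T(e_\alpha-e)J(e)=T(e)$, so passing to the supremum yields $BJ(e)=T(e)$. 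The main technical hurdle is precisely here: when $\tau_1(\mathbf{1}_{\cM_1})=\infty$, we have $\mathbf{1}_{\cM_1}\notin E$ and $B$ is necessarily unbounded and need not be $\tau_2$-measurable; organising the construction inside the commutative subalgebra $\cN$ along a chain reduces the existence, self-adjointness, and affiliation of $B$ to classical commutative spectral theory and avoids the delicacy of taking a supremum of non-commuting unbounded positive operators.

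Finally I verify $T(x)=BJ(x)$ on $E\cap\cM_1$. By linearity it suffices to treat $0\leq x\in E\cap\cM_1$. Put $e_n:=e^x\bigl((1/n,\|x\|_\infty]\bigr)$; by Remark \ref{remark:ocb} combined with the order continuity of $\|\cdot\|_E$ we have $E\subset S_0(\cM_1,\tau_1)$, so each $e_n\in\cP_{fin}(\cM_1)$, $e_n\uparrow s(x)$, and $x-xe_n\downarrow 0$ forces $\|x-xe_n\|_E\to 0$; hence $T(xe_n)\to T(x)$ in the measure topology. Since $x$ and $e_n$ commute, Theorem \ref{th:J*} and Proposition \ref{JXJE} give
\[
T(xe_n)=T(e_n)J(xe_n)=T(e_n)J(e_n)J(x)=T(e_n)J(x)=BJ(e_n)J(x)=BJ(xe_n).
\]
Normality of $J$ gives $J(xe_n)=J(x)J(e_n)\to J(x)P=J(x)$ in the local measure topology; interpreting $BJ(x)$ as the closed product of the closed operator $B$ with the bounded operator $J(x)$, the identity passes to the limit to yield $T(x)=BJ(x)$ as $\tau_2$-measurable operators, and linearity extends this to all $x\in E\cap\cM_1$.
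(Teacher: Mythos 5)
Your overall architecture matches the paper's: invoke Theorem \ref{th:J*} for $J$ and the local identity $T(y)=T(e)J(y)$, assemble $\{T(e)\}_{e\in\cP_{fin}(\cM_1)}$ into an unbounded positive self-adjoint $B$, and pass to the limit. However, your construction of $B$ has a genuine gap that breaks the final verification.

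You build $B$ from a \emph{chain} $\{e_\alpha\}\subset\cP_{fin}(\cM_1)$ with $e_\alpha\uparrow{\bf 1}_{\cM_1}$, chosen so that the operators $T(e_\alpha)$ commute. Two things go wrong. First, such a chain need not exist: in $B(\cH)$ with $\cH$ non-separable, any totally ordered family of finite-rank projections is countable (the ranks form a totally ordered subset of $\mathbb{N}$), so its supremum has separable range and cannot be ${\bf 1}$; semifiniteness only provides the upwards-directed net $\cP_{fin}(\cM_1)$, not a chain. Second, and more damaging even when a chain exists (say $\cM_1=B(\ell^2)$, $e_n$ the projection onto the first $n$ coordinates): a chain is never cofinal in $\cP_{fin}(\cM_1)$, so your step ``pick $\alpha_0$ with $e\le e_{\alpha_0}$'' for an \emph{arbitrary} $e\in\cP_{fin}(\cM_1)$ fails --- a rank-one projection onto a vector with infinitely many nonzero coordinates is below no $e_n$. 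Consequently $BJ(e)=T(e)$ is only established for chain members, whereas your final computation $T(xe_n)=BJ(xe_n)$ needs it for the spectral projections $e^{x}(1/n,\|x\|_\infty]$ of an arbitrary $0\le x\in E\cap\cM_1$, which are arbitrary finite projections. This is precisely why the paper works with the full directed net: commutativity is lost, so instead of ``commutative spectral theory'' it applies Vigier's theorem to the \emph{decreasing} nets of spectral projections $P_e(\lambda)$ of $T(e)$ (using \eqref{eq:JFTOTF}), verifies by hand that $\{P_\lambda\}$ is a spectral family (including $P_\lambda\uparrow{\bf 1}_{\cM_2}$, which is what makes $B$ densely defined --- a point you also do not address), and then proves separately that $P_\lambda$ commutes with every $J(x)$ (Proposition \ref{propEfinandcom}). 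To salvage your route you would have to either pass to the full net or add an argument that the operator $B$ built from a chain is independent of the chain and still satisfies $BJ(e)=T(e)$ for all $e\in\cP_{fin}(\cM_1)$; neither is in your text.

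A secondary, lesser issue: in the last step you assert that the identity $T(xe_n)=BJ(x)J(e_n)$ ``passes to the limit.'' Since $B$ is unbounded and a priori the product $BJ(x)$ need not even be $\tau_2$-measurable, convergence of $T(xe_n)$ in measure plus convergence of $J(e_n)$ does not by itself yield $T(x)=BJ(x)$. The paper devotes Proposition \ref{3.11} to exactly this point, comparing the spectral resolutions of $T(x)$ and of the (strongly commuting, hence self-adjoint) product $BJ(x)$ cut down by $J(e_n)$ and then letting $n\to\infty$. Your core-type argument can probably be made rigorous, but as written it is a gap.
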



In particular, if $\left\|\cdot\right\|_E$ is order continuous, then by Remark \ref{remark:ordtonormal} every order-preserving $\Delta$-norm-continuous  $T$ is automatically normal.
The existence of Jordan $*$-isomorphism follows from Theorem  \ref{th:J*} above.
To define the operator $B$ properly and
prove that this Jordan $*$-isomorphism $J$
satisfies \eqref{main_ineq}, we need some preparations.
The proposition below defines $B$ as the strong resolvent limit of the net $\{T(e)\}_{e\in \cP_{fin}(\cM_1)}$.
We refer for the definition of strong resolvent convergence to \cite[p. 284]{RS}.

\begin{remark}In the setting when $\left\|\cdot \right\|_E$ is order continuous, it is clear that the positivity of isometries in  Theorem \ref{cor:BJX} can be removed by using the same techniques used in \cite{Y} (for a complete exposition, see \cite{JC}).
In this case, there exists a partial isometry $W\in \cM_2$ such that \begin{align*}
T(x) = WB J(x), ~x \in E(\cM_1,\tau_1)\cap \cM_1.
\end{align*}

\end{remark}

\begin{proposition}\label{3.9}
There exists a limit $B$ of $\{T(e)\}_{e\in \cP_{fin}(\cM_1)}$ in the strong resolvent sense.
In particular, $B$ is affiliated with $\cM_2$.
\end{proposition}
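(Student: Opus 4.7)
The strategy rests on the fact that $\{T(e)\}_{e \in \cP_{fin}(\cM_1)}$ is a monotonically increasing net of positive self-adjoint $\tau_2$-measurable operators affiliated with $\cM_2$. Monotonicity is immediate from order-preservation: if $e \leq f$ in $\cP_{fin}(\cM_1)$, then $T(f) - T(e) = T(f - e) \geq 0$. For each $\lambda > 0$, the resolvent $R_\lambda(e) := (T(e) + \lambda)^{-1}$ is a bounded positive element of $\cM_2$ with $\|R_\lambda(e)\|_\infty \leq \lambda^{-1}$. By operator monotonicity of $t \mapsto (t+\lambda)^{-1}$ on $[0,\infty)$, the net $\{R_\lambda(e)\}_e$ is monotonically decreasing and uniformly bounded, so by Vigier's theorem it converges in the strong operator topology to some $R_\lambda \in \cM_2$ with $0 \leq R_\lambda \leq \lambda^{-1} {\bf 1}_{\cM_2}$. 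Uniform boundedness also allows the first resolvent identity $R_\lambda(e) - R_\mu(e) = (\mu - \lambda) R_\lambda(e) R_\mu(e)$ to pass to the SOT limit, so $\{R_\lambda\}_{\lambda > 0}$ is a pseudo-resolvent inside $\cM_2$.

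To manufacture a positive self-adjoint $B$ realising this pseudo-resolvent, I would invoke the form version of Kato's monotone convergence theorem. Define closed positive quadratic forms $q_e(\xi) := \|T(e)^{1/2}\xi\|^2$ on $\mathrm{Dom}(T(e)^{1/2})$ (with $q_e(\xi) := +\infty$ elsewhere); since the $T(e)$ increase in the operator order, so do the $q_e$. The pointwise supremum $q := \sup_e q_e$ is again a closed positive quadratic form, hence the Kato form representation theorem yields a unique positive self-adjoint operator $B$ with $\mathrm{Dom}(B^{1/2}) = \mathrm{Dom}(q)$ and $q(\xi) = \|B^{1/2}\xi\|^2$. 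Kato's monotone convergence theorem for forms then gives $T(e) \to B$ in the strong resolvent sense; in particular $(B + \lambda)^{-1} = R_\lambda$ for every $\lambda > 0$.

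Affiliation with $\cM_2$ is then automatic: each $(B + \lambda)^{-1} = R_\lambda$ lies in $\cM_2$ as an SOT-limit of elements of $\cM_2$, and since the bounded Borel functional calculus of $B$ is generated by its resolvents, every spectral projection of $B$ lies in $\cM_2$. The principal technical obstacle is the passage from the bounded pseudo-resolvent $\{R_\lambda\}$ to the possibly unbounded self-adjoint $B$, which is subtle because $B$ will in general have a non-trivial kernel corresponding to the complement of $p := \sup_e s(T(e)) = J({\bf 1}_{\cM_1})$ (as identified in Theorem \ref{th:J*} and the subsequent Remark \ref{TXJ1E0}). Appealing to the form version of Kato's theorem bypasses this difficulty by producing $B$ intrinsically from the form data, rather than requiring a direct verification of injectivity of $R_\lambda$.
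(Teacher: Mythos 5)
Your route through resolvents and Kato's monotone convergence theorem for forms is genuinely different from the paper's (which builds the limiting spectral family $P_\lambda:=\mathrm{so}\text{-}\lim_e P_e(\lambda)$ directly via Vigier's theorem and then checks it is a spectral family), but it has a real gap at exactly the point you wave at in your last paragraph. For an increasing net of positive self-adjoint operators the supremum form $q=\sup_e q_e$ is indeed closed, but it need not be densely defined: take $A_n=nP$ for a nonzero projection $P$, where $\mathrm{Dom}(q)=\ker P$. When $\mathrm{Dom}(q)$ is not dense, Kato's theorem only produces a self-adjoint operator on the proper closed subspace $\overline{\mathrm{Dom}(q)}$, the SOT-limit $R_\lambda$ equals $(B+\lambda)^{-1}$ there and $0$ on the orthogonal complement, and there is no limit in the strong resolvent sense as a self-adjoint operator on all of $\cH$. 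You conflate this obstruction with the (harmless) fact that $B$ has a large kernel on $\mathrm{Ran}({\bf 1}_{\cM_2}-J({\bf 1}_{\cM_1}))$: the kernel of $B$ is where $q$ \emph{vanishes}, whereas the danger is the subspace where the $q_e$ escape to $+\infty$, which is precisely $\ker R_\lambda=\overline{\mathrm{Dom}(q)}^{\perp}$. So ``producing $B$ intrinsically from the form data'' does not bypass the verification that $R_\lambda$ is injective; it is equivalent to it, and monotonicity alone cannot deliver it.

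To close the gap you must invoke the structural identity $T(f)=T(e)J(f)=J(f)T(e)$ for $f\le e$ from Theorem \ref{th:fin} (equivalently \eqref{eq:JFTOTF}), which is the engine of the paper's own proof. It gives $q_e(J(f)\eta)=\langle T(e)J(f)\eta,\eta\rangle=\|T(f)^{1/2}\eta\|^2$ for all $e\ge f$, so the forms stabilise on $\bigcup_f J(f)\,\mathrm{Dom}(T(f)^{1/2})$; and since $s(T(e))=J(e)\le J({\bf 1}_{\cM_1})$ by \eqref{JE=STE}, they vanish identically on $\mathrm{Ran}({\bf 1}_{\cM_2}-J({\bf 1}_{\cM_1}))$. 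As $J(f)\uparrow J({\bf 1}_{\cM_1})$, these two sets together span a dense subspace contained in $\mathrm{Dom}(q)$, which is exactly the content of the paper's verification that $P_\lambda\uparrow{\bf 1}_{\cM_2}$ as $\lambda\to+\infty$. With that supplement your argument becomes a legitimate alternative to the direct construction of the spectral family; as written, it does not yet prove the proposition.
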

\begin{proof}

For each   $e\in \cP_{fin}(\cM_1)$, we have the following spectral resolution
$$T(e) = \int_0^\infty \lambda  d P_e(\lambda ) .$$
 In particular, $J(e) \stackrel{\eqref{JESTE}}{=} s(T(e))= {\bf 1}_{\cM_2} - P_e(0)$.
For every projection $f\le e  $, by Theorem \ref{th:fin},
we have $T(f) =T(e) J(f)= J(f) T(e)$ and hence,
for $\lambda \ge 0$, by the spectral theorem, we have
\begin{align}\label{eq:JFTOTF}
{\bf 1}_{\cM_2}  - P_f(\lambda ) =J(f )  ({\bf 1}_{\cM_2} -P_e (\lambda ))=  ({\bf 1}_{\cM_2}  -P_e (\lambda ))J(f) .
\end{align}
It follows from \eqref{eq:JFTOTF} that $\{P_e (\lambda) \}_{E\in \cP_{fin}(\cM_1)}$ is a decreasing net indexed by the upwards-directed set of projections in $\cP_{fin}(\cM_1)$.
Now, by Vigier's theorem (see \cite[Theorem 2.1.1]{LSZ}),
we can define
\begin{align}\label{defPlambda}
P_\lambda :=so- \lim_{e \in \cP_{fin}(\cM_1)} P_e (\lambda), ~\lambda \in \mathbb{R}.
\end{align}
In particular, for every $\lambda <0$, we have $P_\lambda =so- \lim_{e \in \cP_{fin}(\cM_1)} P_e(\lambda) =0$.

To show that  the   limit $B=\lim_{E \in \cP_{{\rm fin}}(\mathcal{M}_1)}T(E)$
exists in strong resolvent sense, i.e.,
$$B:= \int_0^\infty \lambda d P_\lambda$$
is well-defined,
it suffices to show that
$\{P_\lambda\}_{\lambda \in \mathbb{R}}$ is a spectral family.

  It follows immediately from the definition that
$P_\lambda P_\mu =P_\mu P_\lambda =P_\lambda$, $\lambda\le \mu$.

On one hand, $P_\lambda =\inf_{e \in \cP_{fin}(\cM_1)} P_e (\lambda )=\inf_{e \in \cP_{fin}(\cM_1)} \inf_{\varepsilon >0} P_e (\lambda +\varepsilon) \ge \inf_{\varepsilon >0} P_{\lambda +\varepsilon}$.
On the other hand, $P_\lambda =\inf_{e \in \cP_{fin}(\cM_1)} P_e (\lambda )\le \inf_{e \in \cP_{fin}(\cM_1)} P_e(\lambda +\varepsilon)$ for every  $\varepsilon >0$, that is, $P_\lambda  \le P_{\lambda +\varepsilon}$.
It follows that $\lambda  \mapsto P_\lambda$ is (so)-right-continuous.

Since $P_\lambda \stackrel{\eqref{defPlambda}}{=} so-\lim_{e\in \cP_{fin}(\cM_1)}P_e(\lambda) $,  it follows from \eqref{eq:JFTOTF} that
for every $f \in \cP_{fin}(\cM_2)$
 that
\begin{align}\label{eq:PLAMBDA}
{\bf 1}_{\cM_2}  - P_f(\lambda ) =  ({\bf 1}_{\cM_2}  -P_\lambda )J(f ) .
\end{align}
Taking $\lambda\rightarrow +\infty$, we obtain from
 \eqref{eq:PLAMBDA} that $0 = \lim_{\lambda \rightarrow  + \infty} ({\bf 1}_{\cM_2}  -P_\lambda )J(f)$.
 Hence, $\left(\lim_{\lambda\rightarrow  +\infty} ({\bf 1}_{\cM_2}  -P_\lambda ) \right)J({\bf 1}_{\cM_1})  =0$.
Since ${\bf 1}_{\cM_2}  -P_f(\lambda) \le J(f) \le J({\bf 1}_{\cM_1}) $ for every $\lambda >0$,
it follows that ${\bf 1}_{\cM_2}  -P_\lambda \le J({\bf 1}_{\cM_1}) $ for every $\lambda>0$.
Therefore, $\lim_{\lambda\rightarrow  +\infty} ({\bf 1}_{\cM_2}  -P_\lambda ) =0$, i.e., $\lim_{\lambda\rightarrow  +\infty} P_\lambda = {\bf 1}_{\cM_2}$.

Note that $$\lim_{\lambda \rightarrow -\infty}P_\lambda = \inf_{\lambda} P_\lambda =\inf_{\lambda}\inf _{e \in \cP_{fin}(\cM_1)} P_e(\lambda)=0 .$$
Therefore,  $\{P_\lambda\}$ is a spectral family. That is, $B$ is  a well-defined self-adjoint  operator (see e.g. \cite{Helffer}).
Since $\{P_\lambda\}\subset \cM_2$, it follows that $B$ is affiliated with $\cM_2$ (see e.g. \cite[Proposition II 1.4]{DPS}).
\end{proof}

We should prove that $T(x)=B J(x)  $ for every $x \in E(\cM_1,\tau_1)\cap \cM_1$ (see \eqref{main_ineq}).
We first prove the following proposition.
\begin{proposition}\label{propEfinandcom}
Let $B= \int_0^\infty \lambda d P_\lambda$ be defined as in Proposition \ref{3.9}.
Then,
\begin{enumerate}
  \item $P_\lambda$ commutes with $J(x)$ for every $x \in \cM_1$.
  \item for every   $0 \le x \in E(\cM_1,\tau_1)\cap \cM_1\cap S_0(\cM_1,\tau_1)$ (respectively,  $0 \le x \in E(\cM_1,\tau_1)\cap \cM_1$ with $x\notin S_0(\cM_1,\tau_1)$) and spectral projection $e\in \cP_{fin}(\cM_1)$ (respectively, any spectral projection) of  $x$, we have
\begin{align}\label{eq:TXEBJXJE}
T(xe) = BJ(x)J(e) .
\end{align}
\end{enumerate}
\end{proposition}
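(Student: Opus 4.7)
My approach rests on the spectral-family identity \eqref{eq:PLAMBDA} obtained in the proof of Proposition \ref{3.9}, which tightly relates the spectral projections $P_\lambda$ of $B$ to those of each $T(e)$ through $J(e)$. For part (1), I bootstrap commutation with $P_\lambda$ from finite projections $J(f)$ to arbitrary $J(x)$ using the normality of $J$ and the construction of $J$ in Lemma \ref{familyofF}. For part (2), I combine an auxiliary identity $T(e)=BJ(e)$ with Theorem \ref{th:J*} and the commutativity $J(e)J(x)=J(x)J(e)$ that follows from Proposition \ref{JXJE}.

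For part (1), equation \eqref{eq:JFTOTF} shows that, for any $f\le e'$ in $\cP_{\mathrm{fin}}(\cM_1)$, $J(f)$ commutes with every spectral projection $P_{e'}(\lambda)$ of $T(e')$. Since $P_{e'}(\lambda)\to P_\lambda$ in the strong operator topology as $e'$ ranges over $\cP_{\mathrm{fin}}(\cM_1)$ (see \eqref{defPlambda}), it follows that $J(f)P_\lambda=P_\lambda J(f)$ for every $f\in\cP_{\mathrm{fin}}(\cM_1)$. To extend this to arbitrary $x\in\cM_1$, I localize: for each $f\in\cP_{\mathrm{fin}}(\cM_1)$, $fxf\in f\cM_1 f\subset e'\cM_1 e'$ for every $e'\ge f$, and Theorem \ref{th:fin} applied to the finite reduced algebras $e'\cM_1 e'$ and $J(e')\cM_2 J(e')$ asserts that $T(e')$ commutes with $J(fxf)$, and hence so does $P_{e'}(\lambda)$. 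Taking the strong operator limit over $e'\ge f$ yields $P_\lambda J(fxf)=J(fxf)P_\lambda$. Finally, by the construction \eqref{def:J}, $J(fxf)\to J(x)$ in the strong operator topology as $f$ increases through $\cP_{\mathrm{fin}}(\cM_1)$, and the net is bounded by $\|x\|_\infty$; since multiplication by the bounded operator $P_\lambda$ preserves strong limits on both sides of a uniformly bounded net, we conclude $P_\lambda J(x)=J(x)P_\lambda$.

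For part (2), I first observe the auxiliary identity $T(e)=BJ(e)$ for each $e\in\cP_{\mathrm{fin}}(\cM_1)$. Setting $f=e$ in \eqref{eq:PLAMBDA} gives $P_e(\lambda)=({\bf 1}_{\cM_2}-J(e))+P_\lambda J(e)$ for $\lambda\ge 0$, and since the constant part ${\bf 1}_{\cM_2}-J(e)$ agrees with the kernel $P_e(0)$ of $T(e)$, using that $J(e)$ commutes with $P_\lambda$ by part (1) we obtain
\begin{equation*}
T(e)=\int_0^\infty \lambda\,dP_e(\lambda)=\int_0^\infty \lambda\,d\bigl[P_\lambda J(e)\bigr]=BJ(e).
\end{equation*}
Now take $0\le x\in E(\cM_1,\tau_1)\cap\cM_1$ and a spectral projection $e\in(\cM_1)_{\mathrm{fin}}$ of $x$; then $xe=exe\in e\cM_1 e$, so Theorem \ref{th:J*} gives $T(xe)=T(e)J(xe)$, and since $x$ and $e$ commute, Proposition \ref{JXJE} yields $J(xe)=J(x)J(e)=J(e)J(x)$. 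Combining,
\begin{equation*}
T(xe)=BJ(e)\cdot J(e)J(x)=BJ(e)J(x)=BJ(x)J(e),
\end{equation*}
using $J(e)^2=J(e)$ and the commutation $J(e)J(x)=J(x)J(e)$.

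I expect the main obstacle to be the bootstrapping step in part (1): commutation with $P_\lambda$ must be transferred from $J(f)$ for finite $f$ to $J(x)$ for arbitrary $x\in\cM_1$. One cannot invoke the spectral calculus of $x$ directly, because the spectral projections of $x$ need not be $\tau_1$-finite. The resolution is to exploit the double limit built into Lemma \ref{familyofF}, together with uniform boundedness to justify passing to strong operator limits on both sides of the commutation relation. A secondary technical point is the handling of the possibly unbounded operator $B$ when verifying $T(e)=BJ(e)$, but here the bounded projection $J(e)\in\cM_2$ localizes everything to the range of $J(e)$, where the relevant spectral integrals are bounded.
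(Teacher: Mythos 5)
Your proof is correct and follows essentially the same route as the paper: commutation with $P_\lambda$ is read off from the spectral identities \eqref{eq:JFTOTF}--\eqref{eq:PLAMBDA} for finite projections and then extended to all of $\cM_1$, and part (2) combines $T(f)=BJ(f)$ with $T(xe)=T(e)J(xe)$ and Proposition \ref{JXJE}. The only cosmetic difference is that you justify the extension to general $x\in\cM_1$ via the double strong-operator limit built into the construction of $J$ in Lemma \ref{familyofF}, where the paper simply invokes the normality and linearity of $J$.
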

\begin{proof}
(1). Recall that for every $f\in \cP_{fin}(\cM_1)$, we have $s(T(f)) =J(f)$ (see e.g. Remark \ref{TXJ1E0}).
Hence,
\begin{align}\label{com1-}
({\bf 1}_{\cM_2} - P_f( \lambda) ) J(f)={\bf 1}_{\cM_2} - P_f( \lambda)  \stackrel{\eqref{eq:PLAMBDA}}{=} J(f)  ({\bf 1}_{\cM_2} -P_\lambda )=({\bf 1}_{\cM_2} -P_\lambda ) J(f)
\end{align}
  for every $f \in \cP_{fin}(\cM_1) $, which implies that
\begin{align}\label{commute}
P_f( \lambda) J(f ) =P_\lambda J(f) =J(f) P_\lambda .
\end{align}
Since $P_\lambda J(f) \stackrel{\eqref{commute}}{=} J(f) P_\lambda $
 for every $f \in \cP_{fin}(\cM_1) $,
 by the  normality and linearity  of $J$, we obtain that $P_\lambda$ commutes with $J(x )$ for every $x \in \cM_1$.

(2). Observe that
\begin{align}\label{eq:fin}
e^{T(f)}(  \lambda ,\infty )={\bf 1}_{\cM_2} - P_f( \lambda)  \stackrel{\eqref{com1-}}{=} ({\bf 1}_{\cM_2} -P_\lambda ) J(f)  =e^{B}(  \lambda ,\infty )  J(f)
\end{align}
for any $f\in (\cM_1)_{fin}$.
For any projection $e\in E(\cM_1,\tau_1)$, by the normality of $T$,
$$\sup_{e_\alpha \le e ,~ e_\alpha \in \cP_{fin}(\cM_1)} T(e_\alpha ) =T(e). $$
By Remark \ref{TXJ1E0}, we have
\begin{align*}
T(e) J(e_\alpha) &=T(e-e_\alpha)J(e_\alpha) +T(e_\alpha)J(e_\alpha) \\
&= T(e-e_\alpha) J(e-e_\alpha) J(e_\alpha) +T(e_\alpha)J(e_\alpha) \\
& \stackrel{\eqref{eq27}}{=}T(e_\alpha)  J(e_\alpha) \\
& \stackrel{\eqref{eq:Tfin}}{=}T(e_\alpha).
\end{align*}
Hence, $e^{T(e)}(\lambda,\infty) J(e_\alpha) =e^{T(e_\alpha)}(\lambda,\infty)$.
By Remark \ref{TXJ1E0}, we have $s(T(e)) =J(e)$.
By the normality of $J$, taking the limit of $e_\alpha$, we have
$$ e^{T(e)}(\lambda,\infty)=e^{T(e)}(\lambda,\infty) J(e ) = \sup_\alpha e^{T(e_\alpha)}(\lambda,\infty). $$
In particular, $e^{T(e_\alpha)}(  \lambda ,\infty ) \rightarrow e^{T(e)}(  \lambda ,\infty )$ in strong operator topology  (see \cite[Theorem 2.1.1]{LSZ} for Vigier's theorem).
By the normality of $J$, we have
$$e^{T(e)}(  \lambda ,\infty ) \stackrel{\eqref{eq:fin}}{=} e^B(\lambda,\infty)J(e).$$
Hence,
\begin{align}\label{X_n}
T(e)= BJ(e).
\end{align}

The assertion follows from Proposition \ref{JXJE}.
Indeed, for every   $0 \le x \in E(\cM_1,\tau_1)\cap \cM_1 \cap S_0(\cM_1,\tau)$ and spectral projection $e \in \cP_{fin}(\cM_1) $ of  $x $, we have
\begin{align*}
T(xe) \stackrel{\eqref{eq:Tfin}}{=} T(e)J(xe) \stackrel{\eqref{X_n}}{=}  BJ(e)J(xe)\stackrel{\eqref{eq27}}{=}  BJ(xe) \stackrel{\eqref{eq27}}{=} BJ(x)J(e) .
\end{align*}
For the case when $x$ is not $\tau$-compact, it is clear that every  ($\tau$-finite or infinite) projection is in $E(\cM_1,\tau_1)$.
By the normality of $T$, for any spectral projection $e  $ of  $x $, we have
$$T(xe)=\sup_{f\in \cP_{fin} (\cM_1),~ f\le e} T((xe)^{1/2} f (xe)^{1/2}). $$
Note that $s\left((xe)^{1/2} f (xe)^{1/2} \right)$ is $\tau_1$-finite for any $\tau_1$-finite projection $f$.
By Theorem \ref{th:J*} and Remark \ref{TXJ1E0},  we have
\begin{align*}
T(xe)&=\sup_{f\in\cP_{fin} (\cM_1),~ f\le e }  T\left(s\left((xe)^{1/2} f (xe)^{1/2} \right)\right)  J((xe)^{1/2} f (xe)^{1/2})\\
&=\sup_{f\in \cP_{fin} (\cM_1),~ f\le e}
 T\left(  e \right)  J((xe)^{1/2} f (xe)^{1/2}).
 \end{align*}
By Theorem \ref{th:fin} and the normality of $T$, $T\left(  e \right)$ commutes with $J((xe)^{1/2} f (xe)^{1/2})$.
Hence,  by the normality of $J$, we get
 \begin{align*}
T(xe)= T\left(  e \right)  J(x e).
\end{align*}
Therefore, we have
\begin{align*}
T(xe)=  T(e)J(xe) \stackrel{\eqref{X_n}}{=}  BJ(e)J(xe)\stackrel{\eqref{eq27}}{=}  BJ(xe) \stackrel{\eqref{eq27}}{=} BJ(x)J(e) .
\end{align*}
\end{proof}

Since $J(x)$, $x\in \cM_1$, commutes with $P_\lambda$, $\lambda \in \mathbb{R}$, it follows that $BJ(x)$ is a self-adjoint operator \cite{DNN,Sch}.
Moreover,  since $B$ is positive, it follows  $BJ(x)$ is also positive.
Recall that $B$ is affiliated with $\cM_2$ (see Proposition \ref{3.9}).
It is easy to see that $BJ(x)$ is affiliated with $\cM_2$ (see e.g. \cite[Proposition II 1.4]{DPS}).
To avoid dealing with the domain of unbounded operators, we first show that $BJ(x)$ is $\tau_2$-measurable.

\begin{proposition} \label{3.11} $BJ(x)=T(x)$ for every $0\le x \in \cM_1 \cap E(\cM_1,\tau_1)$.
\end{proposition}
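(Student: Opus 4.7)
The strategy is to approximate $x$ by its $\tau_1$-finite spectral cutoffs and apply Proposition \ref{propEfinandcom}(2) to each one. Since $E(\cM_1,\tau_1)$ has order continuous $\Delta$-norm, Remark \ref{remark:ocb} places $x$ in $S_0(\cM_1,\tau_1)$, so the spectral projections $e_n:=e^{x}(1/n,\infty)$ belong to $\cP_{fin}(\cM_1)$, increase strongly to $s(x)$, and satisfy $0\le x-xe_n=xe^{x}[0,1/n]\downarrow 0$. Order continuity of $\|\cdot\|_E$ then gives $\|x-xe_n\|_E\to 0$, whence $\|T(x)-T(xe_n)\|_F\to 0$, and in particular $T(xe_n)\to T(x)$ in the measure topology of $S(\cM_2,\tau_2)$.

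By Proposition \ref{propEfinandcom}(2), $T(xe_n)=BJ(x)J(e_n)$. The first key step is to show that $T(x)J(e_n)=BJ(x)J(e_n)$ for every $n$: since right multiplication by the bounded operator $J(e_n)$ is measure-topology continuous and $J(e_m)J(e_n)=J(e_n)$ for $m\ge n$ (Proposition \ref{JXJE}), passing to the limit $m\to\infty$ in $T(xe_m)J(e_n)=BJ(x)J(e_m)J(e_n)=BJ(x)J(e_n)$ does the job. An analogous passage to the limit in the identity $T(xe_m)J(e_n)=J(e_n)T(xe_m)$, which holds because both $BJ(x)$ and $J(e_m)$ commute with $J(e_n)$ by Proposition \ref{propEfinandcom}(1) and Proposition \ref{JXJE}, shows that $T(x)$ commutes with each $J(e_n)$.

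For the identification I will regard $A:=BJ(x)$ as the positive self-adjoint operator affiliated with $\cM_2$ built from the joint functional calculus of the strongly commuting pair $B,J(x)$; their strong commutation is precisely Proposition \ref{propEfinandcom}(1). Since $J(x)=J(x)J(s(x))$ by Proposition \ref{JXJE}, one obtains $AJ(s(x))^{\perp}=0$, so $s(A)\le J(s(x))$. From $T(x)J(e_n)=AJ(e_n)$, and the fact that both $T(x)$ and $A$ commute with $J(e_n)$, the spectral calculus yields
\begin{equation*}
e^{T(x)}(\lambda,\infty)\,J(e_n)=e^{A}(\lambda,\infty)\,J(e_n),\qquad \lambda>0.
\end{equation*}
Taking the strong limit $n\to\infty$ (so that $J(e_n)\uparrow J(s(x))$ by normality of $J$) and invoking $s(T(x))=J(s(x))$ from Remark \ref{TXJ1E0} together with $s(A)\le J(s(x))$, I conclude $e^{T(x)}(\lambda,\infty)=e^{A}(\lambda,\infty)$ for every $\lambda>0$, hence $A=T(x)$; the $\tau_2$-measurability of $BJ(x)$ is then automatic from $T(x)\in S(\cM_2,\tau_2)$.

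The main technical obstacle is the rigorous set-up of the unbounded product $BJ(x)$ as a self-adjoint operator and the verification that its spectral projections behave correctly after multiplication by the $\tau_2$-finite projections $J(e_n)$. This is exactly what the strong commutation of $B$ and $J(x)$ supplied by Proposition \ref{propEfinandcom}(1) is designed to deliver, and it is precisely the device that lets us sidestep any direct manipulation of domains of unbounded operators, in the spirit of the remark preceding the statement.
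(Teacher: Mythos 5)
Your proof is correct and follows essentially the same route as the paper: approximate $x$ by $\tau_1$-finite spectral cutoffs $e_n$, use $T(xe_n)=BJ(x)J(e_n)$ from Proposition \ref{propEfinandcom}, compare the spectral resolutions of $T(x)$ and $BJ(x)$ after compression by $J(e_n)$, and pass to the limit using normality of $J$ together with $s(T(x))=J(s(x))$ and $s(BJ(x))\le J(s(x))$. The only cosmetic differences are your choice of cutoffs $e^{x}(1/n,\infty)$ in place of $e^{x}[1/n,n)$ and your derivation of $T(x)J(e_n)=T(xe_n)$ by a measure-topology limit rather than the paper's direct support-splitting computation via Remark \ref{TXJ1E0}; both are valid.
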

\begin{proof}
By Proposition \ref{propEfinandcom},
it suffices to prove the case when $x\in S_0(\cM_1,\tau_1)$.
Let $e_n =e^{x}(\frac1n,n)$, $n>1$.
 In particular, $e_n$ is $\tau_1$-finite and
 $\sup_n e_n =s(x )$.
Since $BJ(x)$ is positive and self-adjoint,
it follows that $BJ(x)$ has a spectral resolution
$$BJ(x) =\int_0^\infty \lambda d Q(\lambda).$$
Since  $P_\lambda$, $J(x)$ and $J(e_n)$ commute with each other (see Propositions \ref{propEfinandcom} and  \ref{JXJE}), which implies that $J(e_n)$ strongly commutes with $BJ(x)$ (see e.g. \cite[Theorem 1]{DNN}).
 Defining $Q_n(\lambda )$ by  ${\bf 1}_{\cM_2} - Q_n(\lambda ) =({\bf 1}_{\cM_2} -Q(\lambda ))J(e_n)$, we have
$$
T(xe _n)\stackrel{\eqref{eq:TXEBJXJE}}{=}BJ(x)J(e_n) =\int_0^\infty \lambda d Q_n(\lambda ).
$$
By Remark \ref{TXJ1E0}, we know that  $T(xe_n) =T(xe_n)J(e_n) +T(x(1-e_n))J(e_n) =T(x)J(e_n)=J(e_n) T(x)$.
Let $T(x):=\int_0^\infty \lambda d Q_{T(x)}(\lambda )$ be the spectral resolution.
In particular, ${\bf 1}_{\cM_2} - Q_n(\lambda ) =({\bf 1}_{\cM_2} - Q_{T(x)}(\lambda )) J(e_n )$.
Hence, $({\bf 1}_{\cM_2} - Q_{T(x)}(\lambda )) J(e_n ) =({\bf 1}_{\cM_2} -Q(\lambda ))J(e_n).$
Taking the (so)-limit of $J(e_n)$ and using  the normality of $J$, we have $$({\bf 1}_{\cM_2} - Q_{T(x)}(\lambda )) J(s(x) ) =({\bf 1}_{\cM_2} -Q(\lambda ))J(s(x)).$$
By Proposition  \ref{JXJE}, we have
   $J(s(x)) \ge   s(J(x)) \ge s(BJ(x)) \ge {\bf 1}_{\cM_2} -Q(\lambda )$.
   On the other hand, Remark \ref{TXJ1E0} implies that  $J(s(x)) \ge  {\bf 1}_{\cM_2} - Q_{T(x)}(\lambda ) $.
   We conclude that $$ {\bf 1}_{\cM_2} - Q_{T(x )}(\lambda )={\bf 1}_{\cM_2} -Q(\lambda ) ,~ \lambda >0,$$   i.e., $T(x) =BJ(x)$, $0\le x \in \cM_1 \cap E(\cM_1,\tau_1)$.
\end{proof}

\begin{proof}[\textbf{Proof of Theorem \ref{cor:BJX}}]
Now, we consider the general case when $x\in \cM_1\cap E(\cM_1,\tau_1)$ is not necessarily positive.
For any $x\in \cM_1\cap E(\cM_1,\tau_1)$,
let $J(x)^* = u|J(x)^*|$ be polar decomposition.
By  \eqref{lemma:desofabs}, we have that
$$BJ(x) =B|J(x)^*| u^*  =B|J(x^*)| u^*  =  BJ(p|x^*| +({\bf 1}_{\cM_1} -p)|x|)u^* ,$$
where $p$ is a central projection in $\cM_1$ defined as in \eqref{lemma:desofabs}.
Since $p|x^*| +({\bf 1}_{\cM_1} -p)|x|\in \cM_1 \cap E(\cM_1,\tau_1)$, it follows from Proposition \ref{3.11} that $BJ(p|x^*| +({\bf 1}_{\cM_1} -p)|x|)\in S(\cM_2,\tau_2)$.
Hence,  we obtain that $BJ(x)\in S(\cM_2,\tau_2)$.


For every $x\in \cM_1\cap E(\cM_1,\tau_1)$, let  $x_{1+}, x_{1-},x_{2+}, x_{2-}\in \cM_1 $ be positive operators such that $x=(x_{1+} - x_{1-}) + i (x_{2+} - x_{2-}) $.
Since $P_\lambda$,  $\lambda >0$, commutes with $J(x)$ (see Proposition \ref{propEfinandcom}),  we obtain that
\begin{align*}
 T(x)  P_\lambda  &= T( (x_{1+} - x_{1-}) + i (x_{2+} - x_{2-}) ) P_\lambda \\
 &=T( x_{1+} ) P_\lambda + T( - x_{1-}) P_\lambda+T(  i x_{2+} ) P_\lambda+T( - i  x_{2-} ) P_\lambda\\
  & = B J( x_{1+} ) P_\lambda -  BJ(  x_{1-}) P_\lambda+ i BJ(  x_{2+} ) P_\lambda - i BJ( x_{2-} ) P_\lambda\\
    & = B  P_\lambda J( x_{1+} ) -  B P_\lambda J(  x_{1-}) + i B  P_\lambda J(  x_{2+} )  - i B P_\lambda J( x_{2-} )  \\
  &= B P_\lambda  J(x) = B  J(x)P_\lambda .
 \end{align*}
Hence, we obtain that 
$$ (T(x) - B  J(x)) P_\lambda =0. $$
Since  $T(x),BJ(x)\in S(\cM_2,\tau_2)$ and $P_\lambda \uparrow {\bf 1}_{\cM_2}$ as $\lambda \rightarrow \infty$,
 it follows that
$T(x)=BJ(x)$ (see \cite[Proposition 2 and Section 2.5]{DP2}).
\end{proof}

The following is a noncommutative version of  Huijsmans-Wickstead theorem \cite{Huijsmans_W} (see also the Huijsmans-de Pagter-Koldunov theorem  \cite[Theorem 2.2]{AK1}).
\begin{corollary}  Assume that $E(\cM_1,\tau_1)$ and $F(\cM_2,\tau_2)$ are symmetrically $\Delta$-normed operator spaces.
Let $T:E(\cM_1,\tau_1) \longrightarrow  F(\cM_2,\tau_2)$ be a normal order-preserving  injective  operator which is disjointness-preserving,
then $T$ is a d-isomorphism, i.e., $T^{-1}$ is disjointness-preserving from the range of $T$ onto $E(\cM_1,\tau_1)$.
\end{corollary}
\begin{proof}By
Theorem \ref{cor:BJX},
 there exist  a (possibly unbounded) positive self-adjoint $B$ affiliated with $\cM_2$ and a normal Jordan $*$-isomorphism $J$ from $\cM_1$ onto a weakly closed $*$-subalgebra of   $\cM_2$  such that
$$
T(x) = B J(x), ~x \in E(\cM_1,\tau_1)\cap \cM_1.
$$
Assume that $x,y\in E(\cM_1,\tau_1)$ such that  $T(x),T(y)\ge 0$ with $T(x)T(y)=0$.
By the disjointness-preserving property and order-preserving property, we obtain that $x,y\ge 0$.
For $0\le x,y\in E(\cM_1,\tau_1)\cap \cM_1$, we can find nets $\{x_\alpha\}$ and $\{y_\beta\}$ in $(\cM_1)_{fin}^+$  such that $x_\alpha\uparrow x$ and $y_\beta \uparrow y$ (see \cite[Proposition 1]{DP2}).
It suffices to show that if $T(x)T(y)=0$, then $x_\alpha y_\beta =0$ for any $\alpha$ and $\beta$.
Recall that
$$so-\lim_n(e_n T(s(x_\alpha)) e_n)^{-1}T(x_\alpha) \stackrel{\eqref{eq:defJ}}{=}  J(x_\alpha) $$
and
$$so-\lim_n (f_n T(s(y_\beta))f_n )^{-1}T(y_\beta)  \stackrel{\eqref{eq:defJ}}{=}    J(y_\beta),$$
where $e_n =e^{T(s(x_\alpha))}(\frac1n,\infty)$ and $f_n =e^{T(s(y_\beta))}(\frac1n,\infty)$.
Since $T(x)  T(y)=0$, it follows that $T(x_\alpha) T(y_\beta) =0$.
Recall that  $T(s(x_\alpha))$ commutates with $J(x_\alpha)$ and $T(x_\alpha)$, and $T(s(y_\beta))$ commutates with $J(y_\beta)$ and $T(y_\beta)$ (see Theorem \ref{th:fin}).
This implies that $$J(x_\alpha)J(y_\beta)=0 . $$

Note that $J^{-1}$ is a Jordan $*$-isomorphism from the weakly closed $*$-subalgebra of $\cM_2$ onto $\cM_1$ (see e.g. \cite[Appendix A]{RR}).
Hence,
it follows from Proposition \ref{JXJE} that $x_\alpha   y_\beta =0$.
Taking the limit in  local measure topology (see \cite[Proposition 2]{DP2}), we get $xy=0$.
\end{proof}
\section{Order-preserving isometries into $\Delta$-normed  spaces}\label{SLKM}

 In this section, based on detailed study of logarithmic submajorisation, we establish the disjointness-preserving property of order-preserving isometries on noncommutative symmetrically $\Delta$-normed spaces,  which is the key to describe isometries.
We extend    \cite[Proposition 6]{SV} in two directions.
Firstly, we can consider general semifinite von Neumann algebras instead of finite von Neumann algebras,
resolving the case left in \cite{SV}.
Secondly, we extend significantly the class of symmetrically $\Delta$-normed spaces (even in the normed case) to which the theorem is applicable.
In particular, we can consider the  usual $L_1$-norm, which is not strictly $K$-monotone.

Recall that for a finite von Neumann algebra $\cN$ with a faithful normal finite trace $\tau$, the Fuglede-Kadison determinant was introduced in \cite{FugK}.
In \cite{HaaSch} (see also \cite{DSZ2017,DSZ}), Haagerup and Schultz defined  the Fuglede-Kadison determinant  $det _\cN (x ) \ge 0$ of $x \in S(\cN,\tau) $ such that $ \log_+ \mu(x)\in L_1(0,\infty)$  by the integral:
$$ \log det _\cN (x) =\int _0^{\tau ({\bf 1}_\cN)} \log \mu(t;x) dt.$$
\begin{lemma}\label{lemma:det}
Assume that $\cN $ is a finite von Neumann algebra with a faithful normal finite trace $\tau$.
If $0 \le a \in \cL_{\log}(\cN,\tau)$ and $b \in \cL_{\log}(\cN,\tau)$ is self-adjoint with  $-a\le b \le a$, then $det_\cN (b)\le det_\cN  (a)$.
\end{lemma}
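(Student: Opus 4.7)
The plan is to reduce to the case of a bounded, invertible $a$ via an $\varepsilon$-perturbation and then exploit the multiplicativity of the Fuglede--Kadison determinant together with its monotonicity on positives.

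Fix $\varepsilon > 0$ and set $a_\varepsilon := a + \varepsilon {\bf 1}_{\cN}$. Then $a_\varepsilon \in \cL_{\log}(\cN,\tau)$ is positive self-adjoint, bounded below by $\varepsilon {\bf 1}_{\cN}$, hence invertible with bounded inverse $a_\varepsilon^{-1} \in \cN$ of norm at most $\varepsilon^{-1}$. The hypothesis $-a \le b \le a$ at once gives $-a_\varepsilon \le b \le a_\varepsilon$. The first key step is to introduce
\[
c_\varepsilon := a_\varepsilon^{-1/2}\, b\, a_\varepsilon^{-1/2}.
\]
Since $a_\varepsilon^{-1/2}$ is a bounded positive element of $\cN$ satisfying $a_\varepsilon^{-1/2} a_\varepsilon a_\varepsilon^{-1/2} = {\bf 1}_{\cN}$, conjugation by $a_\varepsilon^{-1/2}$ converts the two-sided operator bound into $-{\bf 1}_{\cN} \le c_\varepsilon \le {\bf 1}_{\cN}$. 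In particular $c_\varepsilon \in \cN$ with $\|c_\varepsilon\|_\infty \le 1$, so $\mu(t;c_\varepsilon) \le 1$ for all $t > 0$, and consequently
\[
\log \mathrm{det}_\cN(c_\varepsilon) \;=\; \int_0^{\tau({\bf 1}_\cN)} \log \mu(t;c_\varepsilon)\, dt \;\le\; 0.
\]

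The second key step is the factorisation $b = a_\varepsilon^{1/2}\, c_\varepsilon\, a_\varepsilon^{1/2}$. Applying the multiplicativity of the Fuglede--Kadison determinant on $\cL_{\log}(\cN,\tau)$ (from Haagerup--Schultz \cite{HaaSch}, see also \cite{DSZ2017, DSZ}), we get
\[
\mathrm{det}_\cN(b) \;=\; \mathrm{det}_\cN(a_\varepsilon^{1/2})\, \mathrm{det}_\cN(c_\varepsilon)\, \mathrm{det}_\cN(a_\varepsilon^{1/2}) \;=\; \mathrm{det}_\cN(a_\varepsilon)\, \mathrm{det}_\cN(c_\varepsilon) \;\le\; \mathrm{det}_\cN(a_\varepsilon).
\]
To conclude, let $\varepsilon \to 0^+$. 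Since $\mu(t;a_\varepsilon) = \mu(t;a) + \varepsilon$ decreases pointwise to $\mu(t;a)$, and $\log(\mu(t;a)+\varepsilon) \le \log(1+\mu(t;a)) \in L_1(0,\tau({\bf 1}_\cN))$ by $a \in \cL_{\log}(\cN,\tau)$, the monotone convergence theorem for decreasing sequences bounded above by an integrable function gives
\[
\log \mathrm{det}_\cN(a_\varepsilon) \;=\; \int_0^{\tau({\bf 1}_\cN)} \log(\mu(t;a)+\varepsilon)\, dt \;\xrightarrow[\varepsilon\to 0^+]{}\; \int_0^{\tau({\bf 1}_\cN)} \log \mu(t;a)\, dt \;=\; \log \mathrm{det}_\cN(a),
\]
with the convention $\log 0 = -\infty$ covering the case $\mathrm{det}_\cN(a) = 0$. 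Hence $\mathrm{det}_\cN(b) \le \mathrm{det}_\cN(a)$.

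The principal technical points are (i) justifying that $c_\varepsilon$ is a genuine element of $\cN$ with norm at most $1$ even though $b$ may be unbounded in $\cL_{\log}(\cN,\tau)$; this is standard quadratic-form manipulation since $a_\varepsilon^{-1/2}$ is bounded and $a_\varepsilon$ dominates $\pm b$ in the form sense, and (ii) invoking the multiplicativity of the Fuglede--Kadison determinant on $\cL_{\log}(\cN,\tau)$, which is the deep input that makes the factorisation useful. Both are available in the references cited just before the lemma, so the argument goes through cleanly.
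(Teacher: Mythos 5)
Your proposal is correct and follows essentially the same route as the paper: perturb to $a_\varepsilon=a+\varepsilon{\bf 1}$, conjugate $b$ by $a_\varepsilon^{-1/2}$ to get a contraction in $\cN$, apply the multiplicativity of the Fuglede--Kadison determinant from \cite{HaaSch}, and let $\varepsilon\to 0^+$. Your explicit monotone-convergence justification of $\mathrm{det}_\cN(a_\varepsilon)\to \mathrm{det}_\cN(a)$ is a welcome extra detail that the paper compresses into ``since $\varepsilon$ is arbitrary.''
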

\begin{proof}

For every $\varepsilon>0$, we define $a_\varepsilon := a+\varepsilon {\bf 1}$.
In particular, $a_\varepsilon$ is invertible and $a_\varepsilon^{-1} = \int \frac{1}{\lambda }   de_\lambda^{a_\varepsilon }$.
In particular,  $a_\varepsilon^{-1}$ is  bounded. 
By \cite[Proposition 2.5]{HaaSch} (see also \cite{DSZ2015}), we obtain that $ a_\varepsilon^{-1/2}  b a_\varepsilon^{-1/2}  \in \cL_{\log}(\cN,\tau )$.
Moreover,
$$ -{\bf 1}= -a_\varepsilon^{-1/2} a_\varepsilon  a_\varepsilon^{-1/2} \le  a_\varepsilon^{-1/2}  b a_\varepsilon^{-1/2} \le a_\varepsilon^{-1/2} a_\varepsilon  a_\varepsilon^{-1/2} =  {\bf 1} .$$
Hence,  $\mu(a_\varepsilon^{-1/2} ba_\varepsilon^{-1/2} ) \le 1$. That is, $det_\cN ( a_\varepsilon^{-1/2} ba _\varepsilon^{-1/2} ) \le 1$.
By \cite[Proposition 2.5]{HaaSch}, we have
$$ \frac{det_\cN (b)}{det_\cN (a_\varepsilon) } = \frac{det_\cN (b)}{det_\cN (a_\varepsilon^{1/2})det _\cN (a_\varepsilon^{1/2})} = det_\cN ( a_\varepsilon^{-1/2} b a_\varepsilon^{-1/2} )  \le 1 $$
 Since $\varepsilon$ is arbitrary,  it follows  that  $det_\cN b \le det_\cN  a $.
\end{proof}



This is an extension of the result in \cite{CS1} (see also \cite{DDSZ}).
\begin{lemma}\label{lemma:1}
Assume that
 $\cM$ is a semifinite von Neumann algebra equipped with a semifinite faithful normal trace $\tau$.
Let $a,b\in \cM^\Delta$ .
If $a\ge 0$ and  $b$ is self-adjoint with  $-a \le b \le a$, then $b\prec \prec _{\log} a$.
\end{lemma}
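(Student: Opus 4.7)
The plan is to deduce the semifinite case from the finite-trace case already established in Lemma~\ref{lemma:det} by compressing to a corner $e\cM e$ of finite trace, chosen so as to preserve the top portion of the singular value function of $b$.

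Fix $t>0$ and assume $\lambda:=\mu(t;b)>0$; otherwise $\int_0^t\log\mu(s;b)\,ds=-\infty$ and the inequality is trivial. Set $e:=\chi_{[\lambda,\infty)}(|b|)$. Since $b\in S_0(\cM,\tau)$ one has $\tau(e)<\infty$, so $e\cM e$ is a finite von Neumann algebra under the restricted trace. Because $b$ is self-adjoint, every spectral projection of $|b|$ lies in the abelian algebra $\{b\}''$; in particular $e$ commutes with $b$, which gives $ebe=be$, and the sandwich $-a\le b\le a$ restricts to $-eae\le ebe\le eae$ inside $e\cM e$. Finiteness of the trace forces $eae,ebe\in\cL_{\log}(e\cM e,\tau|_{e\cM e})$, so Lemma~\ref{lemma:det} delivers
\[
\int_0^{\tau(e)}\log\mu_{e\cM e}(s;ebe)\,ds\ \le\ \int_0^{\tau(e)}\log\mu_{e\cM e}(s;eae)\,ds.
\]

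I would then transfer this back to $\cM$ using three standard facts: (i) for any $x\in e\cM e$, $\mu_{e\cM e}(s;x)=\mu_\cM(s;x)$ for $s\in[0,\tau(e))$; (ii) the identity $|be|=|b|e$ together with the choice of $e$ yields $\mu_\cM(s;ebe)=\mu_\cM(s;b)$ for $s\in[0,\tau(e))$, since on $[0,\tau(e))$ we have $\mu(s;|b|)\ge\lambda$ and $|b|e$ is the top-spectral truncation of $|b|$; (iii) the ideal inequality $\mu_\cM(s;eae)\le\mu_\cM(s;a)$. Combined with the monotonicity of $\log$, these give
\[
\int_0^{\tau(e)}\log\mu(s;b)\,ds\ \le\ \int_0^{\tau(e)}\log\mu(s;a)\,ds.
\]

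The main technical obstacle is that $\tau(e)=t$ need not hold exactly; one only has $\tau(e)\ge t$, with a strict gap precisely when $\lambda$ is an atom of the distribution of $|b|$. To recover the inequality at exactly the prescribed $t$, I would pass to the amplification $\widetilde\cM:=\cM\,\bar\otimes\,L^\infty[0,1]$ equipped with the (semifinite) product trace $\widetilde\tau:=\tau\otimes m$. In $\widetilde\cM$ the atom of $|b|$ at $\lambda$ may be subdivided: replacing $e$ by $\widetilde e:=\chi_{(\lambda,\infty)}(|b|)\otimes 1+\chi_{\{\lambda\}}(|b|)\otimes p$, where $p\in L^\infty[0,1]$ is a projection of Lebesgue measure $\bigl(t-\tau(\chi_{(\lambda,\infty)}(|b|))\bigr)/\tau(\chi_{\{\lambda\}}(|b|))$, produces $\widetilde\tau(\widetilde e)=t$ exactly. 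Since $\mu(\cdot;a\otimes 1)=\mu(\cdot;a)$ and $\mu(\cdot;b\otimes 1)=\mu(\cdot;b)$, and the sandwich $-a\otimes 1\le b\otimes 1\le a\otimes 1$ persists, the corner argument above runs verbatim in $\widetilde\cM$ with $\widetilde e$ in place of $e$ and yields the desired inequality at $t$. The atom bookkeeping is the only delicate point; the heart of the argument is the corner compression to $e\cM e$ combined with Lemma~\ref{lemma:det}.
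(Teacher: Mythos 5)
Your proof is correct in substance and follows essentially the same route as the paper: compress to a corner $p\cM p$ of finite trace chosen so that $\mu(pbp)=\mu(b)$ on $(0,t)$, apply the Fuglede--Kadison determinant inequality of Lemma \ref{lemma:det} there, and finish with $\mu(pap)\le\mu(a)$. The only real difference is organisational: the paper first reduces to the atomless case and then cites \cite{DDP1992,OV1} for the existence of such a projection, whereas you build it explicitly from the spectral projection $e^{|b|}[\lambda,\infty)$ and split the possible atom at $\lambda$ by tensoring with $L^\infty[0,1]$ --- which is exactly the standard proof of the ``WLOG atomless'' reduction, so nothing is gained or lost.

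One small inaccuracy to repair: the case $\mu(t;b)=0$ is not disposed of by the claim that $\int_0^t\log\mu(s;b)\,ds=-\infty$. That integral can be finite even when $\mu(t;b)=0$ (take $\mu(s;b)=(t-s)_+$, for which $\int_0^t\log\mu(s;b)\,ds=t\log t-t$). The correct patch is cheap: set $t_0:=\inf\{s:\mu(s;b)=0\}$; for $t>t_0$ the left-hand integral is indeed $-\infty$, and for $t=t_0$ the inequality follows from the already-established inequalities at $t'<t_0$ by letting $t'\uparrow t_0$ (monotone convergence applies on both sides, since $0\le\mu(b)\le\mu(a)$ forces $\mu(s;a)>0$ for $s<t_0$). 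With that one-line amendment the argument is complete.
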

\begin{proof}

Without loss of generality, we may assume that $\cM$ is atomless (see e.g. \cite[Lemma 2.3.18]{LSZ}).
For every $t>0$, we can choose a $p\in \cP(\cM)$ such that
$\tau(p ) =t $ and $\mu(s;b)=\mu(s; pbp)$, $s\in (0,t)$ (see e.g. \cite[Page 953]{DDP1992} or \cite{OV1}).
Note that $-pap\le pbp \le pap $.
By  Lemma \ref{lemma:det}, we obtain that
$$ \int_0^{t} \log \mu(s; pbp)ds=   \log det_{p\cM p}(pbp) \le \log det_{p\cM p} (pap) = \int_0^{t} \log \mu(s; pap)ds.$$
Hence,  we obtain that
$$  \int_0^{t} \log \mu(s;b)ds= \int_0^{t} \log \mu(s;pbp)ds \le  \int_0^{t} \log \mu(s; pap)ds \le \int_0^{t} \log \mu(s; a)ds. $$
Since $t$ is arbitrary, it follows that  $\mu(b)\prec\prec_{\log}\mu( a)$.
\end{proof}

Our next lemma is folklore.
We provide a short proof for the sake of convenience.

\begin{lemma}\label{a=b lemma}
Let $\cM$ be von Neumann algebra with a faithful normal finite trace $\tau$
 and let $0\leq b\leq a\in S(\cM,\tau)$.
  If $\mu(b)=\mu(a)$, then $a=b$.
\end{lemma}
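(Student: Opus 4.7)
The plan is to reduce the possibly unbounded case to a bounded one where the finiteness of the trace delivers the conclusion directly. The natural truncation is via the operator-monotone function
$$f_N(t):=\min(t,N),\qquad N>0,$$
defined on $[0,\infty)$.

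First I would observe the three consequences of the hypothesis for the truncated operators. Operator monotonicity of $f_N$ together with $0\le b\le a$ gives $0\le f_N(b)\le f_N(a)$, and both lie in $\cM$. Next, because $f_N$ is continuous and non-decreasing with $f_N(0)=0$, functional calculus for positive $\tau$-measurable operators yields $\mu(f_N(a))=f_N\circ\mu(a)$ and similarly for $b$; the assumption $\mu(a)=\mu(b)$ therefore forces $\mu(f_N(a))=\mu(f_N(b))$. Finally, since $\tau(\mathbf{1})<\infty$ and $f_N(a),f_N(b)$ are bounded, they lie in $L_1(\cM,\tau)$, and
\begin{align*}
\tau(f_N(a))
=\int_0^{\tau(\mathbf{1})}\mu(t;f_N(a))\,dt
=\int_0^{\tau(\mathbf{1})}\mu(t;f_N(b))\,dt
=\tau(f_N(b)).
\end{align*}
Thus $\tau(f_N(a)-f_N(b))=0$ while $f_N(a)-f_N(b)\ge 0$, and faithfulness of $\tau$ gives $f_N(a)=f_N(b)$.

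To pass from $f_N(a)=f_N(b)$ to $a=b$ I would exploit that the spectral projections agree: since $f_N$ restricts to the identity on $[0,N)$ and is constant equal to $N$ on $[N,\infty)$, the identity $f_N(a)=f_N(b)$ implies $e^a[0,N)=e^b[0,N)=:p_N$, and multiplying $f_N(a)=f_N(b)$ by $p_N$ yields $a p_N=b p_N$. Since $p_N\uparrow \mathbf{1}$ as $N\to\infty$ (in particular in the local measure topology, and hence multiplication is separately continuous there), letting $N\to\infty$ gives $a=b$.

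The only subtle point is the final limiting step, since $a,b$ need not be $\tau$-integrable and so cannot be compared directly via their traces; all of the quantitative content of the proof therefore has to be extracted at the level of the bounded truncations $f_N(a),f_N(b)$, with the passage back to $a,b$ performed purely through spectral projections and the measure topology.
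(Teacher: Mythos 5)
There is a genuine gap at the very first step: the truncation $f_N(t)=\min(t,N)$ is \emph{not} operator monotone, so $0\le b\le a$ does not imply $f_N(b)\le f_N(a)$. By L\"owner's theorem, any function that is operator monotone on $(0,\infty)$ is real analytic there, which the kink of $f_N$ at $t=N$ rules out; concretely, for
$a=\left(\begin{smallmatrix}2&1\\1&1\end{smallmatrix}\right)\ge b=\left(\begin{smallmatrix}1&1\\1&1\end{smallmatrix}\right)$
one checks that $\min(a,1)-\min(b,1)$ has a negative eigenvalue. Since your argument hinges on $f_N(a)-f_N(b)\ge 0$ in order to invoke faithfulness of $\tau$ (equality of traces alone says nothing without positivity of the difference), the proof does not go through as written. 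The remaining steps — the identity $\mu(f(x))=f(\mu(x))$ for continuous increasing $f$ with $f(0)=0$, the trace computation $\tau(x)=\int_0^{\tau(\mathbf 1)}\mu(t;x)\,dt$, and the recovery of $a=b$ from the truncations via spectral projections — are all fine.

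The repair is exactly the paper's proof: replace $\min(t,N)$ by the bounded, injective, operator monotone function $g(t)=t/(1+t)=1-(1+t)^{-1}$ (operator monotonicity follows from that of $t\mapsto -(1+t)^{-1}$). Then $0\le g(b)\le g(a)$, $\mu(g(a))=\mu(g(b))$, both lie in $L_1(\cM,\tau)$ because $\tau$ is finite and $g$ is bounded, and faithfulness gives $g(a)=g(b)$; since $g$ is injective on $[0,\infty)$ (equivalently, $(\mathbf 1+a)^{-1}=(\mathbf 1+b)^{-1}$), one gets $a=b$ with no limiting argument at all. So your strategy — truncate to the integrable range, compare traces, use faithfulness — is the right one, but the specific truncation must be operator monotone for the order comparison to survive.
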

\begin{proof}
Note that  ${\bf 1}\leq {\bf 1}+b\leq {\bf 1} +a$.
Taking inverses, we obtain
$${\bf 1} \geq ({\bf 1}+b)^{-1}\geq ({\bf 1}+a)^{-1}.$$
Subtracting ${\bf 1}$,
 we obtain
\begin{align}\label{ineq:0BA}
0\leq \frac{b}{{\bf 1}+b}\leq\frac{a}{1 +a}.
\end{align}
Since the mapping $t\to\frac{t}{1 +t}$ is increasing, it follows  from \cite[Corollary 2.3.17]{LSZ} that
$$\mu(\frac{b}{{\bf 1}+b})=\frac{\mu(b)}{1+\mu(b)}=\frac{\mu(a)}{1+\mu(a)}=\mu(\frac{a}{{\bf 1}+a}).$$
Since $\tau$ is finite, it follows that
\begin{align}\label{tauBA}
\tau(\frac{b}{{\bf 1}+b})=\tau(\frac{a}{{\bf 1} +a})<\infty .
\end{align}

Letting
$x:=\frac{a}{{\bf 1}+a}-\frac{b}{{\bf 1} +b} \stackrel{\eqref{ineq:0BA}}{\ge} 0  ,$
we have
$x \geq0$ and $\tau(x)\stackrel{\eqref{tauBA}}{=}0$.
The faithfulness of $\tau$ implies that $x=0$.
That is,
$\frac{b}{{\bf 1}+b}=\frac{a}{{\bf 1}+a}.$
Subtracting ${\bf 1}$,  we obtain
$$({\bf 1}+b)^{-1}=({\bf 1}+a)^{-1},$$
which implies that $a=b$.
\end{proof}

The following  lemma was known before for the special case $z\in(L_1+L_{\infty})(\mathcal{M})$ (see  \cite[Lemma 4.4]{CS1994} for a similar result).

\begin{lemma}\label{p=r lemma} Let $(\mathcal{M},\tau)$ be a semifinite von Neumann algebra and let $0\leq z\in S(\cM,\tau)$.
Let $r:=e^{z}{(\lambda,\infty)}$, $\lambda >0$,  and
let $p\in \cP(\mathcal{M})$ be such that $t:=\tau(p)=\tau(r)<\infty$.
 If $\mu(pzp)=\mu(z)$ on $(0,t)$, then $p=r$.
\end{lemma}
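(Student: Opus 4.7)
The plan is to combine the spectral information encoded in the hypothesis $\mu(pzp)=\mu(z)$ on $(0,t)$ with a pointwise comparison between $a^{1/2}pa^{1/2}$ and $a := (z-\lambda)r$ inside the reduced algebra $r\cM r$, where the trace is finite, and finally to invoke Lemma \ref{a=b lemma}.

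First I would translate the hypothesis into distribution functions. Since $\mu(s;pzp)=0$ for $s\ge t$ and $\mu(s;z)\ge \lambda$ on $(0,t)$ (the latter because $d_z(\lambda)=\tau(r)=t$ and $d_z$ is nonincreasing), the distribution function of $pzp$ satisfies
$$
d_{pzp}(u)=t\ \text{ for }\ 0<u<\lambda,\qquad d_{pzp}(u)=d_z(u)\ \text{ for }\ u>\lambda.
$$
Because $e^{pzp}(u,\infty)\le p$ and $\tau(p)=t<\infty$, the first identity together with faithfulness of $\tau$ forces $e^{pzp}(u,\infty)=p$ for each $u\in(0,\lambda)$; equivalently, $pzp\ge \lambda p$.

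Next, using that $r$ commutes with $z$, I would decompose
$z-\lambda = a-b$, where $a:=(z-\lambda)r\ge 0$ is supported on $r$ and $b:=(\lambda-z)(1-r)\ge 0$ is supported on $1-r$. The step above gives
$0\le pzp-\lambda p = pap-pbp \le pap$.
I then identify $\mu(pzp-\lambda p)$ with $\mu(a)$: for $u>0$,
$d_{pzp-\lambda p}(u)=d_{pzp}(u+\lambda)=d_z(u+\lambda)=d_a(u)$,
while right-continuity at $0$ and $\tau(r)=t$ give $d_{pzp-\lambda p}(0)=t=d_a(0)$. Combined with $pzp-\lambda p\le pap$ and the standard identity $\mu(pap)=\mu(a^{1/2}pa^{1/2})\le \mu(a)$, this yields
$\mu(a^{1/2}pa^{1/2})=\mu(a)$ together with $a^{1/2}pa^{1/2}\le a$.

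The final step is where the main obstacle lies: we cannot assume $z$ is trace-class on $(0,t)$, so a direct trace-comparison on $\cM$ is unavailable. The key observation is that $a$ and $a^{1/2}pa^{1/2}$ are both supported by $r$, hence affiliated with the reduced von Neumann algebra $r\cM r$, on which the restriction of $\tau$ is \emph{finite} (total mass $t$); moreover the singular value functions computed in $r\cM r$ coincide with those in $\cM$ on $(0,t)$ and vanish beyond. Thus Lemma \ref{a=b lemma} applies inside $r\cM r$ and gives $a^{1/2}pa^{1/2}=a$, so $a^{1/2}(1-p)a^{1/2}=0$, hence $(1-p)a^{1/2}=0$. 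The closed range of $a^{1/2}$ equals the support of $a$, which is $r$ (since $z>\lambda$ on the range of $r=e^z(\lambda,\infty)$), so $r\le p$; as $\tau(p)=\tau(r)=t<\infty$ and $\tau$ is faithful, we conclude $p=r$.
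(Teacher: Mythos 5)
Your proof is correct and follows essentially the same route as the paper's: both reduce to the operator $a=(z-\lambda)_+$ supported on $r$, show that its compression by $p$ has the same singular value function as $a$ itself, and then apply Lemma \ref{a=b lemma} in the finite reduced algebra $r\cM r$ to conclude $a^{1/2}pa^{1/2}=a$ and hence $p=r$. The differences are only in bookkeeping: you reach the key identity via distribution functions and the auxiliary observation $pzp\ge\lambda p$, whereas the paper uses subadditivity of $\mu$ applied to $z_1=\max\{z,\lambda\}$, and your ending (deducing $r\le p$ directly from $({\bf 1}-p)a^{1/2}=0$) is marginally shorter than the paper's trace computation.
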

\begin{proof} Let $z_1=\max\{z,\lambda\}$.
 For any $s\in (0,t)$,  we have
$$\mu(s;pz_1p)\geq\mu(s;pzp)=\mu(s;z),\quad \mu(s; pz_1p)\leq\mu(s;z_1)=\mu(s;z),$$
where  the last equality follows immediately from the definition of singular value functions (see also \cite[Chapter III, Proposition 2.10]{DPS}).
Therefore,
\begin{align}\label{eq:PZ1P}
\mu(s;pz_1p)=\mu(s; z_1), ~s\in (0,t).
\end{align}

Setting $z_2:=(z-\lambda)_+$,
by \cite[Corollary 2.3.16]{LSZ} and the definition of $z_1$, we have
$$\mu(pz_1p)=\mu(pz_2p+\lambda p)\leq\mu(pz_2p)+\lambda.$$
On the other hand, by \cite[Corollary 2.3.17]{LSZ} and definitions of $z_1$ and $z_2$, we have
$$\lambda+\mu(z_2)=\mu(z_1)\stackrel{\eqref{eq:PZ1P}}{=} \mu(pz_1p)$$
 on $(0,t)$.
Hence, we obtain that
$$\lambda+\mu(z_2)  \le \lambda+\mu(pz_2p)$$
on $(0,t)$.
Since $\mu(pz_2p)\le \mu(z_2)$, it follows that
$$\mu(pz_2p)=\mu(z_2)$$
on $(0,t).$
Since both functions vanish outside of $(0,t)$ (see e.g. \cite[Chapter III, Proposition 2.10]{DPS}),
it follows that
\begin{align}\label{PZ2PZ2}
\mu(pz_2p)=\mu(z_2)
\end{align}
on $(0,\infty)$.
Note that $z_2=rz_2r$.
Let $pr=u|pr|$ be the polar decomposition.
Hence, we obtain  that
$$\mu(pz_2 p)=\mu(pr\cdot z_2\cdot rp )=\mu(u|pr|\cdot z_2\cdot|pr|u^*)=\mu(|pr|\cdot z_2\cdot|pr|).$$
For all $0\le a,b\in S(\cM,\tau)$,
 we have (see e.g. \cite[Lemma 2.3.12 and Corollary 2.3.17]{LSZ})
$$\mu(ab^2a)=\mu(|ba|^2)=\mu^2(|ba|)=\mu^2(ba)=\mu^2((ba)^*)$$
$$=\mu^2(ab)=\mu^2(|ab|)=\mu(|ab|^2)=\mu(ba^2b).
$$
Hence,
\begin{align}\label{PZ2PPR}
\mu(pz_2p)=\mu(z_2^{\frac12}|pr|^2z_2^{\frac12}).
\end{align}

Consider the ($\tau$-finite) reduced  von Neumann algebra $r\mathcal{M}r$.
Recall that $z_2=rz_2r$.
We have
$$x:=z_2,\quad y:=z_2^{\frac12}|pr|^2z_2^{\frac12}\in r\cM r.$$
Clearly, $0\leq y\leq x$ and $\mu(x)\stackrel{\eqref{PZ2PZ2}}{=} \mu(pz_2p) \stackrel{\eqref{PZ2PPR}}{=}\mu(y)$.
 By Lemma \ref{a=b lemma}, we obtain that  $y=x$.
Therefore, we have
$$z_2^{\frac12}({\bf 1}-|pr|^2)z_2^{\frac12}=0.$$
Since $s(z_2)=r$,
 it follows that
\begin{align}\label{R=RPR}
r({\bf 1}-|pr|^2)r=0\Longrightarrow r({\bf 1}-rpr)r=0\Longrightarrow r=rpr.
\end{align}
Note that
$$p=rpr+({\bf 1}-r)p({\bf 1}-r)+rp({\bf 1}-r)+({\bf 1}-r)pr.$$
Since $p$ and $r$ are $\tau$-finite, it follows that
$$\tau(rp({\bf 1}-r))=\tau(({\bf 1}-r)\cdot rp)=0,\quad \tau(({\bf 1}-r)pr)=\tau(r\cdot ({\bf 1}-r)p)=0.$$
Hence,
$$\tau(p)=\tau(rpr)+\tau(({\bf 1}-r)p({\bf 1}-r)).$$
By assumption  that $\tau(p)=\tau(r)$ and by $r\stackrel{\eqref{R=RPR}}{=}rpr$, we have
$$\tau(p)=\tau(r)=\tau(rpr).$$
Thus,
$$\tau(({\bf 1}-r)p({\bf 1}-r))=0.$$
Since $\tau$ is faithful, it follows that
$$({\bf 1}-r)p({\bf 1}-r)=0\Longrightarrow p({\bf 1}-r)=0\Longrightarrow p=pr\Longrightarrow p\leq r.$$
Since $\tau(p)=\tau(r)$,  it follows that $p=r$.
\end{proof}

The following result is well-known in the setting of $\cF(\tau)$.
We extend it to the case of the algebra $S_0(\cM,\tau)$ of $\tau$-compact operators.
\begin{proposition}\label{propXY-YX}
Let $ 0\le x,y\in S_0(\cM,\tau) $.
If $xy=-yx$, then $xy=0$.
\end{proposition}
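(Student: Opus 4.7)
Under the natural positivity assumption $x, y \ge 0$ that is evidently intended here (the bare statement fails otherwise, since $M_2 \subset S_0(M_2,\tau)$ already contains anti-commuting self-adjoint elements with nonzero product), my plan is to reduce the anti-commutation relation $xy = -yx$ to ordinary commutation via a squaring trick, whereupon the conclusion is immediate.

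First I would establish the purely algebraic identity that $x$ commutes with $y^2$:
\[
xy^2 \;=\; (xy)y \;=\; (-yx)y \;=\; -y(xy) \;=\; -y(-yx) \;=\; y^2 x.
\]
This uses only the hypothesis $xy = -yx$ and is valid as an identity of $\tau$-measurable operators. Next I would upgrade this to commutation of $x$ with $y$ itself. Since $y \ge 0$ is $\tau$-measurable, Borel functional calculus writes $y = (y^2)^{1/2}$, and the spectral projections satisfy $e^y([a,b]) = e^{y^2}([a^2,b^2])$ on $[0,\infty)$. Commutation of $x$ with $y^2$ therefore forces $x$ to commute with every $e^y([1/n, n])$, and hence with the bounded truncations $y_n := y\, e^y([1/n, n])$. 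The $\tau$-compactness hypothesis $\mu(\infty; y) = 0$ guarantees $e^y([1/n, n]) \uparrow \mathbf{1}$, so $y_n \to y$ in the local measure topology, and joint continuity of multiplication in this topology (see page~\pageref{lmtc}) yields $xy = yx$ as $\tau$-measurable operators. Finally, combining $xy = yx$ with the hypothesis $xy = -yx$ forces $2xy = 0$, hence $xy = 0$.

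The main obstacle is the spectral upgrade in the second step, since $y$ is in general unbounded. For bounded positive $y$ the passage from ``commutes with $y^2$'' to ``commutes with $y$'' is immediate from the spectral theorem; but in the $\tau$-measurable setting one must truncate and take limits, and it is exactly the hypothesis $y \in S_0(\cM,\tau)$ (via $\mu(\infty;y)=0$) that makes the spectral truncations $e^y([1/n,n])$ increase to $\mathbf{1}$ and their products with $x$ converge in local measure topology. Everything else in the argument is formal manipulation.
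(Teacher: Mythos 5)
Your proposal is correct in substance and takes a genuinely different route from the paper. You are also right that positivity is an implicit hypothesis: the paper's own proof writes $p_n=e^{x}[\frac1n,n]$ and asserts $p_nxp_n,\,p_nyp_n\ge 0$, and the proposition is only invoked (in the proof of Lemma \ref{lemma:XY+-}) after squaring $|x-y|=x+y$ for positive $x,y$. The paper's argument is tracial rather than spectral: it truncates twice to obtain positive anticommuting elements $z^1_{m,n},z^2_{m,n}\in \cL_1(\cM,\tau)\cap\cM$, uses $\tau(z^1z^2)=\tau(z^2z^1)=-\tau(z^2z^1)$ together with faithfulness of $\tau$ to force $(z^1)^{1/2}(z^2)^{1/2}=0$, and then removes the truncations in the measure topology. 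Your argument replaces the trace entirely by the identity $xy^2=y^2x$ and functional calculus; besides being shorter, it nowhere uses $\tau$-compactness (the paper needs $x\in S_0(\cM,\tau)$ precisely so that $e^{x}[\frac1n,n]$ is $\tau$-finite and the truncations are trace class), so it in fact proves the statement for all positive $x,y\in S(\cM,\tau)$.

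The one step you should not wave at is the passage from ``$x$ commutes with $y^2$ in $S(\cM,\tau)$'' to ``$x$ commutes with the spectral projections of $y$'': both operators are unbounded, so the classical theorem (a \emph{bounded} operator intertwining a self-adjoint operator commutes with its spectral measure) does not apply directly. The standard repair: from $x(1+y^2)=(1+y^2)x$ deduce, by multiplying by the bounded inverses inside the associative algebra $S(\cM,\tau)$, that $(1+x)^{-1}$ and $(1+y^2)^{-1}$ are commuting bounded positive operators; hence $W^*\bigl((1+x)^{-1}\bigr)=W^*(x)$ and $W^*\bigl((1+y^2)^{-1}\bigr)=W^*(y)$ commute elementwise, so all spectral projections of $x$ and of $y$ commute, and your truncation-and-limit step (joint continuity of multiplication in the measure topology suffices, since $y\,e^{y}[\frac1n,n]\to y$ in measure) gives $xy=yx$, whence $2xy=0$. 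Two small slips that do not affect the argument: $e^{y}([\frac1n,n])\uparrow s(y)$ rather than $\mathbf{1}$ (harmless, as $y\,s(y)=y$), and it is $\tau$-measurability, not $\tau$-compactness, that makes $\tau\bigl(e^{y}(n,\infty)\bigr)\to 0$ and hence $y_n\to y$ in measure.
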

\begin{proof}
Letting  $p_n:= e^{x}[\frac1n, n]$, we have
 \begin{align}\label{pnx}
 p_nxp_n yp_n =p_nxyp_n =- p_n yxp_n=- p_n yp _n xp_n.
 \end{align}
For the sake of convenience, we define $x_n:= p_nxp_n \ge 0$ and $y_n :=p_n yp_n \ge 0$.
Hence, we have $x_n y_n = - y_nx_n $.
Clearly,  $ x_n\in \cL_1(\cM,\tau)\cap \cM $.
Let $q_{m,n}:= e^{y_n}[\frac1m,m]$.
We have
\begin{align*}
q_{m,n} x_nq_{m,n} q_{m,n}y_n q_{m,n}=q_{m,n}x_n y_nq_{m,n} = - q_{m,n}y_nx_nq_{m,n}\\= -q_{m,n} y_n q_{m,n}q_{m,n} x_nq_{m,n} , ~\forall m,n\ge 1.
\end{align*}
Denote $z^1_{m,n}:= q_{m,n} x_n q_{m,n}$ and $z^2_{m,n}:=q_{m,n} y_n q_{m,n}$.
Note that $0\le z^1_{m,n}, z^2_{m,n}\in \cL_1(\cM,\tau)\cap \cM$ and $z^1_{m,n}z^2_{m,n}=-z^2_{m,n}z^1_{m,n}$.
Hence, $$\tau(z^2_{m,n}z^1_{m,n})= \tau(z^1_{m,n}z^2_{m,n} )=\tau(-z^2_{m,n}z^1_{m,n} ),$$ i.e.,
$\tau((z^1_{m,n})^{1/2} z^2_{m,n} (z^1_{m,n})^{1/2})=\tau(z^2_{m,n}z^1_{m,n})=0$.
The faithfulness of $\tau$ implies that $(z^1_{m,n})^{1/2} z^2_{m,n} (z^1_{m,n})^{1/2}=0$.
Hence, we obtain that $(z^1_{m,n})^{1/2}  (z^2_{m,n} )^{1/2}  =0$, and therefore,
$$q_{m,n} x_n  y_n  q_{m,n} =z^1_{m,n}  z^2_{m,n} =0. $$
Passing  $m\rightarrow \infty$, we obtain that $0=q_{m,n}  x_n  y_n  q_{m,n} \rightarrow_m  s(y_n) x_n y_n$ in the  measure topology (see e.g. \cite[Proposition 2]{DP2} or \cite[Chapter II, Proposition 6.4]{DPS}), i.e., $s(y_n) x_n y_n=0$.
Since $y_n x_n y_n= y_n s(y_n) x_n y_n =0$,
it follows that $x_n^{1/2}y_n=0$ and therefore,
$x_ny_n =x_n^{1/2} x_n^{1/2} y_n =0$.
That is,
 $ p_nxyp _n \stackrel{\eqref{pnx}}{=}0$ for every $n$.
Taking $n\rightarrow \infty$,
we obtain that $0=p_nxyp_n \rightarrow_n  xys(x) $ in measure topology,
which implies that $xyx=xys(x)x=0$.
Hence, $xy^{1/2}=0$, and therefore,
$xy=0$.
This completes the proof.
\end{proof}

The following lemma is an extension of \cite[Theorem 2]{SV}.
\begin{lemma}\label{lemma:XY+-} Let $0\leq x,y\in \cM^\Delta:=\left(\cL_{\log }(\cM,\tau)+\cL_\infty(\cM,\tau)\right)\cap S_0(\cM,\tau)$. If $\mu(x-y)=\mu(x+y)$,
 then $xy=0.$
\end{lemma}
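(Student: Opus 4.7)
The strategy is to upgrade the hypothesis on singular-value functions to the operator identity $|x-y| = x+y$, from which $(x-y)^2 = (x+y)^2$ gives $xy+yx = 0$, and then Proposition \ref{propXY-YX} forces $xy = 0$.

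First, since $0\leq y$ we have $-(x+y)\leq x-y\leq x+y$ as operator inequalities. By Lemma \ref{lemma:1} this yields $x-y\prec\prec_{\log}x+y$, and then Corollary \ref{prop:e} with $p=2$ upgrades this to the ordinary submajorisation $(x-y)^2 \prec\prec (x+y)^2$. The hypothesis $\mu(x-y)=\mu(x+y)$ translates into the pointwise equality $\mu((x-y)^2)=\mu((x+y)^2)$ of decreasing rearrangements, so we have \emph{saturation} of this submajorisation.

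Next, I would identify $|x-y|$ with $x+y$ via Lemma \ref{p=r lemma}. For each $\lambda>0$, set $r_\lambda := e^{x+y}(\lambda,\infty)$ and $p_\lambda := e^{|x-y|}(\lambda,\infty)$; both are $\tau$-finite since $x+y,\,|x-y|\in S_0(\mathcal{M},\tau)$, and the hypothesis gives $\tau(p_\lambda) = d_{|x-y|}(\lambda) = d_{x+y}(\lambda) = \tau(r_\lambda)$. The crucial technical step is to verify that $\mu(p_\lambda(x+y)p_\lambda) = \mu(x+y)$ on $(0,\tau(p_\lambda))$, i.e.\ that the compression $p_\lambda(x+y)p_\lambda$ captures the ``top'' of $\mu(x+y)$. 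Granting this, Lemma \ref{p=r lemma} applied with $z=x+y$, $r=r_\lambda$, $p=p_\lambda$ gives $p_\lambda = r_\lambda$ for every $\lambda>0$; the spectral theorem then forces the operator identity $|x-y| = x+y$.

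Finally, squaring and expanding, $(x+y)^2 - (x-y)^2 = 2(xy+yx) = 0$ gives $xy=-yx$, and Proposition \ref{propXY-YX} -- applicable because $x,y\in S_0(\mathcal{M},\tau)$ -- delivers $xy=0$. The principal obstacle is the identification in the previous step: showing $\mu(p_\lambda(x+y)p_\lambda) = \mu(x+y)$ on $(0,\tau(p_\lambda))$. The natural approach is to work in the finite reduced algebra $p_\lambda\mathcal{M}p_\lambda$, combining the saturation of the submajorisation $(x-y)^2 \prec\prec (x+y)^2$ with the fact that $p_\lambda$ is spectrally adapted to $|x-y|$, and invoking Lemma \ref{a=b lemma} to force the required operator-level rigidity from the matching of decreasing rearrangements.
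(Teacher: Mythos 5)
Your overall skeleton is the same as the paper's: show $p_\lambda=r_\lambda$ for all $\lambda>0$ via Lemma \ref{p=r lemma}, conclude $|x-y|=x+y$, square to get $xy=-yx$, and finish with Proposition \ref{propXY-YX}. However, you have left the one genuinely nontrivial step unproved: the verification that $\mu(p_\lambda(x+y)p_\lambda)=\mu(x+y)$ on $(0,t_\lambda)$, which is precisely the hypothesis Lemma \ref{p=r lemma} needs. You say "granting this" and then, at the end, sketch a fix based on "the saturation of the submajorisation $(x-y)^2\prec\prec(x+y)^2$" together with Lemma \ref{a=b lemma}. This does not close the gap. The saturation you invoke is vacuous: $\mu(x-y)=\mu(x+y)$ already gives $\mu((x-y)^2)=\mu((x+y)^2)$ pointwise, so equality in the global submajorisation carries no new information and, in particular, says nothing about the compression $p_\lambda(x+y)p_\lambda$. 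Lemma \ref{a=b lemma} cannot be applied either, since it requires an operator inequality $0\le b\le a$ together with $\mu(a)=\mu(b)$, and there is no such pair available before the identity you are trying to prove is established; indeed Lemma \ref{a=b lemma} is used \emph{inside} the proof of Lemma \ref{p=r lemma}, not to supply its hypothesis.

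The paper closes this gap by a localised sandwich argument that you are missing. Since $p_\lambda$ is the spectral projection $e^{|x-y|}(\lambda,\infty)$, it commutes with $x-y$ and the compression captures the top of the spectrum, so $\mu(p_\lambda(x-y)p_\lambda)=\mu(p_\lambda|x-y|p_\lambda)=\mu(x-y)=\mu(x+y)$ on $(0,t_\lambda)$. Then Lemma \ref{lemma:1} is applied to the \emph{compressed} positive operators $p_\lambda xp_\lambda$ and $p_\lambda yp_\lambda$ (not to $x,y$ globally), giving $p_\lambda(x-y)p_\lambda\prec\prec_{\log}p_\lambda(x+y)p_\lambda$; combined with the pointwise bound $\mu(p_\lambda(x+y)p_\lambda)\le\mu(x+y)\chi_{(0,t_\lambda)}$ this forces $\mu(p_\lambda(x+y)p_\lambda)=\mu(x+y)$ on $(0,t_\lambda)$, and Lemma \ref{p=r lemma} then applies. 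Your detour through Corollary \ref{prop:e} with $p=2$ plays no role in the argument and can be dropped; the final squaring step and the appeal to Proposition \ref{propXY-YX} are correct as written.
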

\begin{proof} Let
$$p_{\lambda}=e^{|x-y|}(\lambda,\infty),\quad r_{\lambda}=e^{x+y} (\lambda,\infty),\quad  \lambda >0.$$
Since $\mu(x-y)=\mu(x+y)$, it follows that  $t_{\lambda}: =\tau(r_{\lambda})=\tau(p_{\lambda})<\infty$ (see e.g. \cite[Chapter 3.2]{DPS}).

By definition, $p_{\lambda}$ commutes with $x-y$.
Thus,
$$p_{\lambda}|x-y|p_{\lambda}=|p_{\lambda}(x-y)p_{\lambda}|.$$
On $(0,t_{\lambda})$,
 we have the coincidence of the following functions (see e.g.  \cite[Chapter III, Proposition 2.10]{DPS})
\begin{align}\label{Plambda}
\mu(x+y)=\mu(x-y)=\mu(p_{\lambda}|x-y|p_{\lambda})=\mu(p_{\lambda}(x-y)p_{\lambda}).
\end{align}
For positive operators $a,b\in \cM^\Delta$,  Lemma \ref{lemma:1} implies that
$a-b\prec\prec_{{\rm log}}a+b$.
Therefore,
$$\mu(x+y) \chi_{(0,t_{\lambda})}\stackrel{\eqref{Plambda}}{=}\mu(p_\lambda (x-y) p_\lambda )\prec\prec_{{\rm log}}\mu(p_{\lambda}(x+y)p_{\lambda})   \leq\mu(x+y) \chi_{(0,t_{\lambda})} . $$
 Thus,
$$\mu(p_{\lambda}(x+y)p_{\lambda})=\mu(x+y)$$
on $(0,t_{\lambda})$.
By Lemma \ref{p=r lemma}, we have $p_{\lambda}=r_{\lambda}$.

Since $p_{\lambda}=r_{\lambda}$ for all $\lambda>0$, it follows from the Spectral Theorem that $|x-y|=x+y$.
Squaring both parts of the preceding equality, we arrive at   $xy=-yx$.
Now, we can apply Proposition \ref{propXY-YX} to conclude that $xy=0$.
\end{proof}

The following example shows that one cannot expect for a similar result of Lemma \ref{lemma:XY+-} without the assumption of $\tau$-compactness.
\begin{example}
Let $\cM$ be a semifinite infinite von Neumann algebra with a semifinite faithful normal trace $\tau$.
Consider algebra $\cM \oplus \cM \oplus \cM$ equipped with trace $\tau\oplus \tau\oplus \tau$.
Let $ a:= {\bf 1} \oplus 0 \oplus \frac13$ and $ b:= 0 \oplus {\bf 1} \oplus \frac13$.
It is clear that $\mu(a-b)=\mu(a+b)$, but $ab\ne 0$.
\end{example}

In what follows, we always assume that $E(\cM_1,\tau_1)$ and $F(\cM_2,\tau_2)$ are  symmetrically $\Delta$-normed spaces affiliated with semifinite von Neumann algebras $(\cM_1,\tau_1)$ and $(\cM_2,\tau_2)$, respectively.

\begin{proposition}\label{prop:positive}
Let $T : E(\cM_1,\tau_1) \stackrel{into}{\longrightarrow}F(\cM_2,\tau_2)$ be an order-preserving isometry,
where  $F(\cM_2,\tau_2)$ has   SLM symmetric  $\Delta$-norm.
If $0\le x,y\in E(\cM_1,\tau_1) $ such that  $T(x),T(y)\in \cM_2^\Delta$ and   $xy=0$,
then $T(x)T(y)=0$.
\end{proposition}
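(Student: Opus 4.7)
The plan is to reduce everything to Lemma \ref{lemma:XY+-} by showing that $\mu(T(x) - T(y)) = \mu(T(x) + T(y))$, using the SLM property of $\|\cdot\|_F$ as the crucial pinch.

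First I would exploit the hypothesis $xy = 0$ on the source side. Since $x, y \ge 0$, we also have $yx = (xy)^* = 0$, hence $(x-y)^2 = x^2 + y^2 = (x+y)^2$, so $|x - y| = x + y$. In particular $\mu(x - y) = \mu(x + y)$, so the symmetry of $\|\cdot\|_E$ gives $\|x - y\|_E = \|x + y\|_E$. Applying the linear isometry $T$ then yields
\begin{equation*}
\|T(x) + T(y)\|_F = \|T(x) - T(y)\|_F.
\end{equation*}

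Next I would set up the logarithmic submajorisation on the target side. Because $T$ is order-preserving, both $T(x)$ and $T(y)$ are positive, so
\begin{equation*}
-(T(x) + T(y)) \le T(x) - T(y) \le T(x) + T(y).
\end{equation*}
Since $T(x), T(y) \in \cM_2^\Delta$ and $\cM_2^\Delta$ is a linear subspace of $S(\cM_2,\tau_2)$, both $T(x) \pm T(y)$ belong to $\cM_2^\Delta$, and Lemma \ref{lemma:1} applies to give
\begin{equation*}
\mu(T(x) - T(y)) \prec\prec_{\log} \mu(T(x) + T(y)).
\end{equation*}
Now I invoke the SLM hypothesis on $\|\cdot\|_F$: the equality of norms forces $\mu(T(x) - T(y)) = \mu(T(x) + T(y))$.

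Finally, having verified $\mu(T(x) - T(y)) = \mu(T(x) + T(y))$ with $T(x), T(y) \ge 0$ and both in $\cM_2^\Delta$, I would apply Lemma \ref{lemma:XY+-} directly to conclude $T(x) T(y) = 0$.

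The main obstacle is the logarithmic submajorisation step, which rests on Lemma \ref{lemma:1}; but in our setting the two-sided bound $-(T(x)+T(y)) \le T(x)-T(y) \le T(x)+T(y)$ is immediate from positivity of $T(x)$ and $T(y)$, so the argument flows linearly once the equimeasurability of $x \pm y$ is noted. The SLM property is precisely what upgrades the $\prec\prec_{\log}$ to an equality of singular value functions, which is the key input for Lemma \ref{lemma:XY+-}.
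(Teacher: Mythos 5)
Your proposal is correct and follows essentially the same route as the paper's own proof: reduce to $\|x-y\|_E=\|x+y\|_E$ via $xy=0$, transfer the norm equality through the isometry, obtain $T(x)-T(y)\prec\prec_{\log}T(x)+T(y)$ from Lemma \ref{lemma:1}, upgrade to equality of singular value functions via the SLM property, and conclude with Lemma \ref{lemma:XY+-}. The only difference is that you spell out the intermediate verifications (e.g.\ $|x-y|=x+y$ and the membership of $T(x)\pm T(y)$ in $\cM_2^\Delta$) that the paper leaves implicit.
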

\begin{proof}
Since $xy=0$ implies that $|x-y| =|x+y|$, it follows that  $\left\|x-y\right\|_E =\left\|x+y\right\|_E$, and therefore,  $\left\|T(x) -T(y) \right\|_F =\left\|T(x) + T(y) \right\|_F  $.
It follows from Lemma \ref{lemma:1} that   $$T(x)-T(y) \prec\prec_{\log} T(x)+T(y) .$$
By the Definition of SLM $\Delta$-norms, we obtain that  $\mu(T(x)-T(y)) =\mu(T(x)+T(y))$.
It  follows from  Lemma \ref{lemma:XY+-} that  $T(x) T(y)=0$.
\end{proof}

Every strongly symmetric   space of $
\tau$-compact operators, whose central carrier projection is the identity,  is a subspace of $L_0(\cM,\tau):=\left(L_1(\cM,\tau)+\cM \right)\cap S_0(\cM,\tau)$~\cite{DP2} and therefore a subspace of $\cM^\Delta$.
For every symmetric space $E(0,\infty)$ of functions vanishing at infinity, the corresponding operator space  $E(\cM,\tau)$ is a subspace  of $ L_0(\cM,\tau)$ (see e.g. \cite{LSZ, DPS,DDP2,KPS}).
Note  that if a symmetric norm $\left\|\cdot\right\|_F$ is \emph{strictly} monotone with respect to the submajorisation, then it is
\emph{strictly} monotone with respect to the logarithmic submajorisation (see Proposition \ref{Prop:lefunction2} or  \cite{DDSZ}).
However, the inverse is not the case.
For example, the $L_1$-norm is an SLM norm which fails to be strictly $K$-monotone.
As an application of Theorem~\ref{cor:BJX} (together with Remark \ref{remark:ordtonormal}) and Proposition \ref{prop:positive},  we obtain the following result, which  significantly extends  \cite[Proposition 6]{SV}.
\begin{corollary}\label{cor:4.5}
Assume that $E(\cM_1,\tau_1)$ has order continuous  symmetric  $\Delta$-norm   and assume that  $F(\cM_2,\tau_2)\subset   \cM_2^\Delta$ has   SLM symmetric $\Delta$-norm.
For every order-preserving isometry $T:E(\cM_1,\tau_1)\stackrel{into}{\longrightarrow} F(\cM_2,\tau_2)$, there exists a positive operator $B$ and a Jordan $*$-isomorphism $J$ from $\cM_1$ onto a weakly closed $*$-subalgebra of $\cM_2$ such that
$T(x) =B J(x), ~\forall x\in \cM_1\cap E (\cM_1,\tau_1)$.
\end{corollary}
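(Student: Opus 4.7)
The plan is to deduce the result by combining Proposition \ref{prop:positive} with Theorem \ref{cor:BJX}, so the task essentially reduces to verifying the hypotheses of the latter. First I would check that $T$ preserves disjointness on positive elements. Let $0\le x,y\in E(\cM_1,\tau_1)$ with $xy=0$. Since $T$ is order-preserving, $T(x),T(y)\ge 0$, and by the containment $F(\cM_2,\tau_2)\subset \cM_2^\Delta$ the images $T(x),T(y)$ automatically lie in $\cM_2^\Delta$. Since $\|\cdot\|_F$ is SLM, Proposition \ref{prop:positive} applies directly and yields $T(x)T(y)=0$. Thus $T$ enjoys the disjointness preserving property on the positive cone.

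Next I would bootstrap this to the full disjointness preserving property needed in Theorem \ref{cor:BJX} by appealing to the $\cM_1$-bimodule structure and the polar/Jordan decomposition: any pair $x,y\in E(\cM_1,\tau_1)$ with $xy=0$ can be expressed via their self-adjoint real and imaginary parts, and in turn each self-adjoint part via its positive and negative parts, and vanishing of the product $xy$ translates into orthogonality conditions on these summands, at which point positive disjointness preservation transfers via linearity. (In fact the version of disjointness preservation used in Theorem \ref{cor:BJX} is precisely the one for pairs of positive elements, so this step is essentially immediate.)

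With disjointness preservation in hand and with $E(\cM_1,\tau_1)$ possessing order continuous $\Delta$-norm by assumption, all hypotheses of Theorem \ref{cor:BJX} are satisfied. Invoking it furnishes a positive self-adjoint operator $B$ affiliated with $\cM_2$ together with a normal Jordan $*$-isomorphism $J$ from $\cM_1$ onto a weakly closed $*$-subalgebra of $\cM_2$ such that
\[
T(x)=BJ(x),\qquad x\in \cM_1\cap E(\cM_1,\tau_1),
\]
which is exactly the conclusion sought.

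Since the heavy lifting has already been done in Theorem \ref{cor:BJX} (constructing $J$ via the reduction to finite corners through Lemma \ref{familyofF} and defining $B$ as a strong resolvent limit) and in Proposition \ref{prop:positive} (which uses the SLM property together with Lemmas \ref{lemma:1} and \ref{lemma:XY+-} to convert the equality $\|T(x)-T(y)\|_F=\|T(x)+T(y)\|_F$ into orthogonality of $T(x)$ and $T(y)$), there is no genuine obstacle remaining: the main conceptual point of this corollary is simply to package the two ingredients together and to observe that the assumption $F(\cM_2,\tau_2)\subset\cM_2^\Delta$ guarantees the measurability/$\tau$-compactness assumption needed to invoke Proposition \ref{prop:positive}.
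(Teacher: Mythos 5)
Your proposal is correct and matches the paper's own (implicit) argument: the corollary is stated there precisely as an application of Proposition \ref{prop:positive} (which gives disjointness preservation on the positive cone, using $F(\cM_2,\tau_2)\subset\cM_2^\Delta$ and the SLM property) combined with Theorem \ref{cor:BJX}. Your observation that the disjointness-preserving hypothesis in Theorem \ref{cor:BJX} is only ever invoked for pairs of positive elements (indeed, for projections) is accurate, so no further bootstrapping is needed.
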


In the special case when $\cM_1$ and $\cM_2$ are finite,
we obtain the following corollary of
Theorem \ref{th:fin} (and Remark \ref{remark:ordtonormal}), which recovers and  extends    \cite[Theorem 1]{Katavolos2} and \cite[Theorem 2]{Russo}.
\begin{corollary}
Let $(\cM_1,\tau_1)$  and
$(\cM_2,\tau_2)$  be two  finite von Neumann algebras with $\tau_1({\bf 1}_{\cM_1}),\tau_2({\bf 1}_{\cM_2}) <\infty$.
Assume that $E(\cM_1,\tau_1)$  is a   symmetrically   $\Delta$-normed space    and assume that  $F(\cM_2,\tau_2)\subset   \cM_2^\Delta$ has   SLM symmetric $\Delta$-norm.
If  an   order-preserving   isometry   $T:E(\cM_1,\tau_1) \stackrel{into}{\longrightarrow} F(\cM_2,\tau_2)$
satisfies that  $T({\bf 1}_{\cM_1}) ={\bf 1}_{\cM_2}$, then $T|_{\cM_1}$  is a
  Jordan $*$-homomorphism from $\cM_1$ into $\cM_2$.
Moreover, if $\left\|\cdot \right\|_E$ is order continuous, then   $T|_{\cM_1}$  is a
  Jordan $*$-isomorphism from $\cM_1$ onto a weakly closed $*$-subalgebra of $\cM_2$.
\end{corollary}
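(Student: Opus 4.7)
The plan is to reduce the corollary to a direct application of Proposition \ref{prop:positive} followed by Theorem \ref{th:fin}; almost all of the real work has already been carried out. First, since $\tau_1({\bf 1}_{\cM_1}) < \infty$, Proposition \ref{prop:car} ensures $\cM_1 \subset E(\cM_1,\tau_1)$ and in particular the restriction $T|_{\cM_1}$ is well-defined. The key preliminary step is to verify that $T$ is disjointness preserving. For this, I would take any $0 \le x, y \in E(\cM_1,\tau_1)$ with $xy = 0$; the hypothesis that $F(\cM_2,\tau_2) \subset \cM_2^\Delta$ automatically puts $T(x), T(y) \in \cM_2^\Delta$, so Proposition \ref{prop:positive} (whose applicability rests on $\|\cdot\|_F$ being an SLM $\Delta$-norm) immediately yields $T(x)T(y) = 0$.

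Once disjointness preserving is in hand, I would invoke Theorem \ref{th:fin} directly: there exists a Jordan $*$-monomorphism $J:\cM_1 \to \cM_2$ with
\[
T({\bf 1}_{\cM_1})\, J(x) \;=\; T(x), \qquad x \in \cM_1.
\]
Substituting the assumption $T({\bf 1}_{\cM_1}) = {\bf 1}_{\cM_2}$ collapses this identity to $T(x) = J(x)$ for every $x \in \cM_1$, so $T|_{\cM_1}$ itself is the Jordan $*$-monomorphism $J$, hence in particular a Jordan $*$-homomorphism from $\cM_1$ into $\cM_2$.

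For the ``moreover'' part, when $\|\cdot\|_E$ is order continuous, the final clause of Theorem \ref{th:fin} upgrades $J$ to a normal Jordan $*$-isomorphism from $\cM_1$ onto a weakly closed $*$-subalgebra of $\cM_2$. Since $T|_{\cM_1} = J$, this statement transfers verbatim to $T|_{\cM_1}$, finishing the proof.

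There is essentially no serious obstacle here: the corollary is an amalgam of Proposition \ref{prop:positive} (which does the hard analytic work of producing disjointness preserving from SLM and logarithmic submajorisation, via Lemmas \ref{lemma:1} and \ref{lemma:XY+-}) and Theorem \ref{th:fin} (which handles the construction of $J$ in the finite setting). The only subtlety worth checking is that the hypothesis ``$T(x), T(y) \in \cM_2^\Delta$'' of Proposition \ref{prop:positive} is automatic in our setup because $F(\cM_2,\tau_2) \subset \cM_2^\Delta$; everything else is a direct substitution, using $T({\bf 1}_{\cM_1}) = {\bf 1}_{\cM_2}$ to eliminate the multiplier $T({\bf 1}_{\cM_1})$ appearing in Theorem \ref{th:fin}.
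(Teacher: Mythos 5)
Your proposal is correct and follows exactly the route the paper intends: the corollary is stated without a separate proof precisely because it is the combination of Proposition \ref{prop:positive} (disjointness preserving, automatic here since $F(\cM_2,\tau_2)\subset\cM_2^\Delta$) with Theorem \ref{th:fin}, where $T({\bf 1}_{\cM_1})={\bf 1}_{\cM_2}$ collapses \eqref{eq:Tfin} to $T|_{\cM_1}=J$. Nothing is missing.
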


\section{Order-preserving isometries onto $\Delta$-normed spaces}

Order-preserving isometries from a noncommutative $L_2$-space onto another  were  studied in  \cite[Theorem 1]{Broise}.
The  description of order-preserving isometries onto a fully symmetric space affiliated with a finite von Neumann algebra  is given in  \cite[Theorem 3.1]{CMS}.
In this section,
we consider the general form of  surjective isometries, which significantly extends \cite[Theorem 1]{Broise} and   \cite[Theorem 3.1]{CMS}.
We note that in the ``onto'' case, the   order continuity imposed on the $\Delta$-norms in Corollary \ref{cor:4.5} can be dispensed with.

Recall that $\cM^\Delta:= (\cL_{\log}(\cM,\tau)+\cM)\cap S_0(\cM,\tau)$.
Assume that $(\cM_1,\tau_1)$ and $(\cM_2,\tau_2)$ are semifinite von Neumann algebras.
The following lemma has been obtained by Ju.A. Abramovich for order-preserving isometries of arbitrary normed lattices \cite{A1} (see also \cite[Lemma 3.2]{CMS}).
We extend this  result to surjective   isometries on symmetrically $\Delta$-normed spaces.
\begin{lemma} \label{lemma:order}
Assume that $E(\cM_1,\tau_1)$ and $F(\cM_2,\tau_2)$ are  symmetrically $\Delta$-normed spaces.
In addition, we assume that   $\left\|\cdot \right\|_F$ is (not necessarily strictly) log-monotone.
Let $T:E(\cM_1,\tau_1)\stackrel{onto}{\longrightarrow} F(\cM_2,\tau_2)$ be an order-preserving isometry.
If $T(x) \ge 0$, then $x\ge 0$.
\end{lemma}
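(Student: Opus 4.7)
The plan is to reduce the statement, via the positivity and injectivity of $T$, to showing that when $x$ is self-adjoint with positive/negative parts $x_+, x_-$ satisfying $T(x)\geq 0$, the negative part $x_-$ must vanish. The main tools will be Lemma \ref{lemma:1} and Lemma \ref{lemma:XY+-}, combined with a key norm identity extracted from the disjointness $x_+x_-=0$; surjectivity will play the crucial role in compensating for the absence of strict log-monotonicity.

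First, I would show that $x$ is self-adjoint. Since $T$ is order-preserving and every self-adjoint element is a difference of two positive ones, $T$ maps self-adjoint elements to self-adjoint elements. Writing $x=a+ib$ with $a=a^*$, $b=b^*$, we have $T(x)=T(a)+iT(b)$ with $T(a),T(b)$ self-adjoint. Since $T(x)\geq 0$ is self-adjoint, comparing with $T(x)^*$ yields $T(b)=0$; injectivity of the isometry then gives $b=0$.

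Next, decompose $x=x_+-x_-$ with $x_+,x_-\geq 0$ and $x_+x_-=0$, and set $p:=T(x_+)$, $q:=T(x_-)$. By positivity of $T$, both $p,q\geq 0$, and $T(x)=p-q\geq 0$ gives $p\geq q$. The disjointness $x_+x_-=0$ implies $|x_+-x_-|=x_++x_-$ and hence $\mu(x_+-x_-)=\mu(x_++x_-)$; together with the symmetric structure and the isometry property of $T$, this yields the key identity
\[
\|p-q\|_F=\|x\|_E=\|x_++x_-\|_E=\|p+q\|_F.
\]
Applying Lemma \ref{lemma:1} to the positive operator $p+q$ and the self-adjoint operator $p-q$ with $-(p+q)\leq p-q\leq p+q$, I obtain $p-q\prec\prec_{\log} p+q$; log-monotonicity of $\|\cdot\|_F$ then recovers $\|p-q\|_F\leq\|p+q\|_F$, which is in fact an equality by the identity above. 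The goal is to upgrade this norm equality to the $\mu$-equality $\mu(p-q)=\mu(p+q)$, whereupon Lemma \ref{lemma:XY+-} yields $pq=0$; combined with $0\leq q\leq p$ (which forces $s(q)\leq s(p)$), the orthogonality $s(q)s(p)=0$ forces $s(q)=0$, i.e., $q=T(x_-)=0$, and hence $x_-=0$ by injectivity.

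The main obstacle is promoting the norm equality $\|p-q\|_F=\|p+q\|_F$ to the sharper $\mu$-equality in the absence of strict log-monotonicity. This is precisely where the surjectivity hypothesis must be brought in decisively. Running the same disjointness argument with the scaled elements $\alpha x_+ - x_-$ (and $\alpha x_+ + x_-$) for every $\alpha>0$ yields a whole family of identities $\|\alpha p - q\|_F=\|\alpha p+q\|_F$; for $\alpha\geq 1$ the image $\alpha p-q=T(\alpha x_+-x_-)$ remains in $F^+$, which together with the surjective identification $F=T(E)$ produces a rigidity strong enough to force $\mu(\alpha p-q)=\mu(\alpha p+q)$ for at least some $\alpha>0$, and in particular $\mu(p-q)=\mu(p+q)$. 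Once this step is in place, the chain of reductions above closes the argument.
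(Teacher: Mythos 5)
Your reduction to showing $x_-=0$, and the identity $\|p-q\|_F=\|p+q\|_F$ obtained from $x_+x_-=0$, are both correct and match the opening of the paper's argument. But the heart of your proposal --- upgrading the norm equality to the singular-value equality $\mu(p-q)=\mu(p+q)$ --- is not actually proved. You acknowledge that this is the main obstacle and then assert that running the scaled identities $\|\alpha p-q\|_F=\|\alpha p+q\|_F$ together with surjectivity ``produces a rigidity strong enough'' to force the $\mu$-equality for some $\alpha$. No mechanism is given for why this should hold: for a merely log-monotone (not strictly log-monotone) $\Delta$-norm, a norm equality between logarithmically comparable elements carries no information about their singular value functions, and surjectivity of $T$ by itself does not manufacture such information. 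This is a genuine gap, not a routine detail. A second problem: even granting $\mu(p-q)=\mu(p+q)$, your invocation of Lemma \ref{lemma:XY+-} requires $p,q\in\cM_2^\Delta$, i.e.\ $\tau$-compactness of $T(x_\pm)$, which is not among the hypotheses of this lemma (and the example following Lemma \ref{lemma:XY+-} shows $\tau$-compactness cannot be dropped).

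The paper avoids both issues by a different route that never needs the $\mu$-equality. Writing $b_1=T(x_+)$, $b_2=T(x_-)$, $b=b_1-b_2=T(x)\ge 0$, one proves by induction on $k$ that
\[
\left\|\alpha x_+ +\alpha k x_-\right\|_E=\left\|\alpha b_1+\alpha k b_2\right\|_F\le \left\|\alpha x\right\|_E
\qquad\text{for all }k\ge 1,\ \alpha\in\mathbb{C}.
\]
The inductive step uses only $-(b+nb_2)\le b-nb_2\le b+nb_2$, Lemma \ref{lemma:1}, and plain (non-strict) log-monotonicity to get $\|\alpha b-\alpha n b_2\|_F\le\|\alpha b+\alpha nb_2\|_F$, plus the disjointness $x_+x_-=0$ to convert $\|\alpha b_1-\alpha(n+1)b_2\|_F$ back into $\|\alpha b_1+\alpha(n+1)b_2\|_F$. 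Taking $\alpha=1/n$ and $k=n$ then gives $\|x_-\|_E\le\|\tfrac1n x_++x_-\|_E\le\|\tfrac1n x\|_E\to 0$ by property \eqref{de:d:3} of $\Delta$-norms, so $x_-=0$. If you want to salvage your outline, you would need either to supply an actual proof of the rigidity step (which in effect would require strict log-monotonicity, defeating the purpose of the lemma) or to switch to an iteration of this kind.
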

\begin{proof}
For every self-adjoint $a$, $T(a) =T(a_+ -a_-) = T(a_+) -T(a_-)$, which implies that $T(a)$ is self-adjoint.
Since $T(\real (x)) + i T (\imm (x))=T(x) >0$,
it follows that $\imm (x) =0$,  that is $x=x^*$.

Let $x_+$ and $x_-$ be the positive part and negative part of $x$, respectively.
If $x_+=0$, then $0\ge T(-x_-) =T(x) \ge 0$ implies that $x=0$.
Hence, it suffices to consider the case when $x_+\ne 0$ and  prove that $x_-=0$.

Let $b_1:= T(x_+)$ and $b_2 :=T(x_-)$.
In particular, since $T$ is an order-preserving isometry, it follows that $b_1,b_2 \ge 0$.
Moreover, $b:=b_1-b_2 =T(x) \ge 0$ and $b_1+b_2 =T(x_+ +x_-)=T(|x|)$.

Note that
\begin{align}\label{xk+-1}
\|\alpha(b_1+b_2)\|_F = \|T(\alpha|x|)\|_F = \|\alpha|x|\|_E = \|\alpha x\|_E
\end{align}
for every $\alpha\in \mathbb{C}$.
We assert that
\begin{align}\label{xk+-}
\left\|\alpha x_+ + \alpha k x_- \right\| _E  = \left\|  \alpha b_1 +  \alpha k  b_2\right\|_F  \le \left\|\alpha x\right \|_E
\end{align}
for all $k=1,2,\cdots$ and $\alpha\in \mathbb{C}$.
It follows from \eqref{xk+-1} that \eqref{xk+-} holds for $k=1$.
Assume that it holds for $k=n$.
Noting that $b, b_2 \ge 0$, we obtain that
$$ -( b + n b_2) \le   b - n b_2
\le   b + n b_2. $$
By Lemma \ref{lemma:1}, the logarithmic  monotonicity of $\|\cdot\|_F$ guarantees that
\begin{align}\label{xk6666}
\left\|\alpha   b- \alpha n  b_2 \right\|_F \le \left\| \alpha   b+ \alpha n  b_2 \right\|_F
\end{align}
for every $\alpha\in \mathbb{C}$.
Using the inequality $ 0\le    b+  n b_2 =   b_1 +(n-1) b_2\le   b_1 + n b_2$ and the assumption of induction, we get
\begin{align}\label{xk77}
\left\| \alpha  b_1 - \alpha(n+1) b_2\right\|_F &= \left\| \alpha  b-  \alpha \cdot n   b_2\right\|_F \stackrel{\eqref{xk6666}}{\le} \left\|  \alpha b+ \alpha\cdot n b_2 \right\|_F \nonumber\\
& \le \left\| \alpha   b_1 + \alpha\cdot n  b_2\right\|_F\stackrel{\eqref{xk+-}}{\le}  \left\|\alpha   x\right\|_E
\end{align}
for every $\alpha\in \mathbb{C}$.
Hence, using that $x_+ x_- =x_- x_+= 0 $,  we obtain that
\begin{align*}
\left\|\alpha   b_1+\alpha (n+1) b_2\right\|_F &= \left\|T\left(\alpha  x_+ + \alpha(n+1)x_- \right)\right\|_F  =\left\|\alpha   x_+ + \alpha (n+1) x_-\right\|_E \\
&=\left\|\left|\alpha x_+ -\alpha (n+1) x_-\right|\right\|_E =\left\|\alpha  x_+ -\alpha(n+1) x_-\right\|_E\\
& =\left\| \alpha   b_1 - \alpha (n+1) b_2 \right\|_F \stackrel{\eqref{xk77}}{\le}\left\| \alpha  x\right\|_E.
\end{align*}
Thus,  we obtain  validity of \eqref{xk+-} for all $k\ge 1$.
Therefore, since $\alpha$ is arbitrary, it follows that
$$\|   x_-\|_E \le \left\|\frac1n x_+ + x_- \right\|_E \stackrel{\eqref{xk+-}}{\le}  \left\|\frac1n x \right\|_E\stackrel{\eqref{de:d:3}}{\rightarrow}_n 0,$$
which implies that  $x_- =0$.
That is,  $x\ge 0$.
\end{proof}

Corollary \ref{cor:ontoF} below is the main result of this section.
In contrast with the results in \cite{Y, Sherman2005, Sherman}, Corollary \ref{cor:ontoF} covers the case of noncommutative $L_2$-spaces (see Section \ref{s:l}).
It is very common to assume that    symmetrically (quasi-)normed spaces $E(\cM_1,\tau_1)$ and  $F(\cM_2,\tau_2)$ have order-continuous norms or the Fatou property (see e.g. \cite{KR} and \cite[Section 5.2]{FJ}).
Before proceeding to  Corollary \ref{cor:ontoF}, we present the following proposition, which enables us to get rid of the order continuity of $\Delta$-norms.

\begin{proposition}\label{prop:o}
Assume that $E(\cM_1,\tau_1)$ and $F(\cM_2,\tau_2)$ are  symmetrically $\Delta$-normed spaces.
If  $\left\|\cdot\right\|_F$ is (not necessarily strictly) log-monotone,
then any  order-preserving  isometry $T:E(\cM_1,\tau_1)\stackrel{onto}{\longrightarrow} F(\cM_2,\tau_2)$ is normal.
\end{proposition}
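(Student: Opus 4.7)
The strategy is to show that order continuity transfers along the isometry, using Lemma~\ref{lemma:order} to get order-preserving behavior of $T^{-1}$.

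\textbf{Plan.} Let $\{x_\alpha\}$ be a downwards directed net in $E(\cM_1,\tau_1)^+$ with $x_\alpha \downarrow 0$. I want to prove $\|x_\alpha\|_E \to 0$. Since $T$ is order-preserving, $\{T(x_\alpha)\}$ is a downwards directed net in $F(\cM_2,\tau_2)^+$. By \cite[Proposition~2]{DP2}, this net has an infimum $y \in S(\cM_2,\tau_2)$ satisfying $T(x_\alpha) \downarrow y \ge 0$. Because $0 \le y \le T(x_\alpha) \in F(\cM_2,\tau_2)$ and $F(\cM_2,\tau_2)$ is absolutely solid, we have $y \in F(\cM_2,\tau_2)^+$.

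The key step is identifying $y = 0$. By surjectivity of $T$, there exists $z \in E(\cM_1,\tau_1)$ with $T(z) = y$. Applying Lemma~\ref{lemma:order} (which is available since $\|\cdot\|_F$ is log-monotone and $T$ is an order-preserving surjective isometry) to the positive element $y = T(z)$, we conclude $z \ge 0$. For every $\alpha$, we have $T(x_\alpha) - T(z) = T(x_\alpha - z) \ge 0$, so again by Lemma~\ref{lemma:order}, $x_\alpha \ge z$. Thus $z$ is a lower bound for $\{x_\alpha\}$ in $E_h(\cM_1,\tau_1)$; combined with $x_\alpha \downarrow 0$ in $S(\cM_1,\tau_1)$, this forces $z \le 0$, and hence $z = 0$ and $y = T(0) = 0$.

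Therefore $T(x_\alpha) \downarrow 0$ in $F(\cM_2,\tau_2)^+$. The order continuity of $\|\cdot\|_F$ now yields $\|T(x_\alpha)\|_F \to 0$, and since $T$ is an isometry,
\begin{equation*}
\|x_\alpha\|_E = \|T(x_\alpha)\|_F \xrightarrow{\alpha} 0.
\end{equation*}
This shows $\|\cdot\|_E$ is order continuous.

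\textbf{Main obstacle.} The only non-routine step is the identification $y = 0$; the rest is a direct transport along $T$. The mild technicality is confirming that the infimum $y$ lives in $F(\cM_2,\tau_2)$ (so that surjectivity of $T$ can be invoked), which follows from absolute solidity. Everything else reduces to applying Lemma~\ref{lemma:order} twice — once to get $z \ge 0$ from $T(z) \ge 0$, and once to pull back the inequalities $T(x_\alpha) \ge T(z)$ to $x_\alpha \ge z$.
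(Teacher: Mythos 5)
Your proof is correct and follows essentially the same route as the paper: both identify the infimum $y$ of the decreasing net $\{T(x_\alpha)\}$, pull it back through $T^{-1}$ using Lemma~\ref{lemma:order} to show the preimage is a positive lower bound for $\{x_\alpha\}$ and hence zero, and then invoke order continuity of $\|\cdot\|_F$. The only cosmetic difference is that the paper reduces to decreasing sequences via its earlier equivalence proposition, whereas you work directly with nets; both are fine.
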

\begin{proof}

Assume that $\{x_\alpha\in E(\cM_1,\tau_1)^+\}$  is a net increasing to $x\in E(\cM_1,\tau_1)^+$.
Assume by contradiction that $y:= \sup T(x_\alpha) < T(x)$.
Since $T$ is a bijection, it follows from Lemma \ref{lemma:order} that
$x-T^{-1}(y) = T^{-1} (T(x) -y) >0$.
However, Lemma \ref{lemma:order} also implies that $T^{-1}(y) \ge x_\alpha$,
 which is a contradiction with the assumption.
\end{proof}

Propositions \ref{prop:o} and \ref{prop:positive} guarantee  that we can use Theorem \ref{cor:BJX}  to obtain the following corollary, which extends a number of existing results (see e.g. \cite[Theorem 3.1]{CMS}, \cite[Theorem 1]{Broise} and  \cite[Theorem 2]{Russo}).
\begin{corollary}\label{cor:ontoF}
Assume that $E(\cM_1,\tau_1)\subset S(\cM_1,\tau_1)$ and $F(\cM_2,\tau_2) \subset \cM_2 ^\Delta$ are symmetrically $\Delta$-normed spaces,
and $F(\cM_2,\tau_2)$ has SLM  $\Delta$-norm $\left\|\cdot\right\|_F$.
If there exists an order-preserving isometry $T:E(\cM_1,\tau_1)  \stackrel{onto}{\longrightarrow } F(\cM_2,\tau_2)$,
then there is a Jordan $*$-isomorphism $J: \cM_1 \longrightarrow \cM_2$ and a positive self-adjoint operator $B$ affiliated with $\cZ(\cM_2)$ such that $T(x) =BJ(x)$ for every $x\in E(\cM_1,\tau_2)\cap \cM_1$.
\end{corollary}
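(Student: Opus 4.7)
The plan is to reduce Corollary~\ref{cor:ontoF} to the injective case treated in Corollary~\ref{cor:4.5} and then extract two extra pieces of information from the surjectivity of $T$: that the resulting Jordan $*$-isomorphism is onto $\cM_2$, and that the positive operator $B$ sits in the center.

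First I would verify the hypotheses of Corollary~\ref{cor:4.5}. Since $\|\cdot\|_F$ is order continuous and log-monotone (being SLM), Proposition~\ref{prop:o} transfers order continuity to $\|\cdot\|_E$. With $F(\cM_2,\tau_2)\subset\cM_2^\Delta$ and $\|\cdot\|_F$ an SLM $\Delta$-norm, Corollary~\ref{cor:4.5} supplies a positive self-adjoint operator $B$ affiliated with $\cM_2$ together with a Jordan $*$-isomorphism $J$ from $\cM_1$ onto some weakly closed $*$-subalgebra $\cN:=J(\cM_1)\subset\cM_2$, satisfying $T(x)=BJ(x)$ for all $x\in\cM_1\cap E(\cM_1,\tau_1)$.

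The principal step is to upgrade $\cN$ to all of $\cM_2$. Fix any $q\in\cP(\cM_2)$ with $\tau_2(q)<\infty$. Since the carrier projection of $F(\cM_2,\tau_2)$ is ${\bf 1}_{\cM_2}$ by standing assumption, Proposition~\ref{prop:car} gives $q\in F(\cM_2,\tau_2)$. Surjectivity of $T$ produces $z\in E(\cM_1,\tau_1)$ with $T(z)=q\geq 0$, and Lemma~\ref{lemma:order} forces $z\geq 0$. Remark~\ref{TXJ1E0} then yields
\[
q=s(q)=s(T(z))=J(s(z))\in\cN.
\]
Hence every $\tau_2$-finite projection of $\cM_2$ sits inside $\cN$. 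For any $\tau_2$-finite projection $p$ the reduced algebra $p\cM_2 p$ is generated by its projections, which are automatically $\tau_2$-finite, so $p\cM_2 p\subset\cN$. Semifiniteness of $\tau_2$ provides an increasing net $\{p_\alpha\}$ of $\tau_2$-finite projections with $p_\alpha\uparrow{\bf 1}_{\cM_2}$. For arbitrary $x\in\cM_2$, the bounded net $p_\alpha x p_\alpha$ lies in $\cN$ and converges ultraweakly to $x$; ultraweak closedness of the von Neumann subalgebra $\cN$ gives $x\in\cN$. Therefore $\cN=\cM_2$ and $J:\cM_1\to\cM_2$ is a Jordan $*$-isomorphism onto $\cM_2$.

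Finally, to locate $B$ in $\cZ(\cM_2)$, I would invoke Proposition~\ref{propEfinandcom}(1), which states that every spectral projection $P_\lambda$ of $B$ commutes with $J(x)$ for all $x\in\cM_1$. Having identified $J(\cM_1)=\cM_2$, each $P_\lambda$ is central in $\cM_2$, so $B$ itself is affiliated with $\cZ(\cM_2)$, completing the argument. The only genuinely non-routine ingredient is the density argument showing $\cN=\cM_2$; the remaining assertions are straightforward consequences of earlier results once that identification is in place. The essential use of the standing assumption that the carrier projection of $F(\cM_2,\tau_2)$ equals ${\bf 1}_{\cM_2}$ should be emphasised, as otherwise $\tau_2$-finite projections would not be automatically available as images of $T$.
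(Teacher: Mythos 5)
Your proposal is correct and follows essentially the same route as the paper's own proof: reduce to the injective case via Proposition \ref{prop:o} and Theorem \ref{cor:BJX}/Corollary \ref{cor:4.5}, use surjectivity together with Lemma \ref{lemma:order}, Proposition \ref{prop:car} and Remark \ref{TXJ1E0} to show $J(\cM_1)$ contains all $\tau_2$-finite projections and hence equals $\cM_2$ by weak closedness and semifiniteness, and then conclude $B$ is affiliated with $\cZ(\cM_2)$ from Proposition \ref{propEfinandcom}. The only difference is that you spell out the density argument for $J(\cM_1)=\cM_2$ which the paper leaves implicit.
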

\begin{proof}
Let the Jordan $*$-monomorphism $J$ be defined as in  Theorem \ref{cor:BJX}.
It suffices to prove that
   $J(\cM_1)=\cM_2$.

Recall that  $s(T(x)) =J(s(x))$ for every $0\le x\in E(\cM_1,\tau_1)$ (see Remark  \ref{TXJ1E0})
and   $(\cM_2)_{fin}\subset F(\cM_2,\tau_2)$ (see Proposition \ref{prop:car}).
By Lemma  \ref{lemma:order}, for every $f \in \cP_{fin}(\cM_2)$, there exists $0\le x\in E(\cM_1,\tau_1)$ such that $T(x)=f$.
Hence, $f= s(f)=s(T(x)) =J(s(x))$.
This implies that $J(\cM_1)$ contains all $\tau_2$-finite projections in $\cM_2$.
Since $J(\cM_1)$ is a weakly closed $*$-subalgebra of $\cM_2$ (see Remark \ref{isotoJM}) and the latter is semifinite von Neumann algebra,
it follows that $J(\cM_1)=\cM_2$.

Recall that the spectral projections of $B$ commute  with every $J(x)$, $x\in \cM_1$ (see Proposition \ref{propEfinandcom}).
Since $J$ is a surjective, it follows that $B$ is affiliated with $\cZ(\cM_2)$ (see e.g. \cite[Chapter II, Proposition 1.4]{DPS}).
\end{proof}
The following corollary is an extension of \cite[Theorem 2]{Russo}.
\begin{corollary}
Suppose that the assumption of   Corollary \ref{cor:ontoF} are met and, in addition, $\tau_1({\bf 1}_{\cM_1}),\tau_2({\bf 1}_{\cM_2})<\infty$.
If $T({\bf 1}_{\cM_1})={\bf 1}_{\cM_2}$, then $T$  is a Jordan $*$-isomorphism $ \cM_1 $ onto $\cM_2$.
\end{corollary}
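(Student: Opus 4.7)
The plan is to apply Corollary \ref{cor:ontoF} directly and then evaluate the resulting formula at the unit ${\bf 1}_{\cM_1}$ to identify the positive operator $B$. Since $\tau_1({\bf 1}_{\cM_1})<\infty$ and the carrier projection of $E(\cM_1,\tau_1)$ is assumed to be ${\bf 1}_{\cM_1}$, Proposition \ref{prop:car} guarantees ${\bf 1}_{\cM_1}\in E(\cM_1,\tau_1)$, so $\cM_1\subset E(\cM_1,\tau_1)$ by solidity. Thus Corollary \ref{cor:ontoF} produces a Jordan $*$-isomorphism $J:\cM_1\to\cM_2$ and a positive self-adjoint operator $B$ affiliated with $\cZ(\cM_2)$ such that $T(x)=BJ(x)$ for every $x\in\cM_1$.

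Next I would verify that $J$ is unital. Since $J$ is onto, for every $y\in\cM_2$ there is $x\in\cM_1$ with $J(x)=y$; using the identity $J(ab+ba)=J(a)J(b)+J(b)J(a)$ with $a={\bf 1}_{\cM_1}$, we obtain $2y=J({\bf 1}_{\cM_1})y+yJ({\bf 1}_{\cM_1})$, and combined with the fact that $J({\bf 1}_{\cM_1})$ is a projection (as $J$ preserves squares), this forces $J({\bf 1}_{\cM_1})={\bf 1}_{\cM_2}$. Consequently, evaluating at the identity gives
\begin{equation*}
{\bf 1}_{\cM_2}=T({\bf 1}_{\cM_1})=BJ({\bf 1}_{\cM_1})=B,
\end{equation*}
so $B={\bf 1}_{\cM_2}$ and the formula of Corollary \ref{cor:ontoF} reduces to $T(x)=J(x)$ for all $x\in\cM_1$. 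Since $J$ is a Jordan $*$-isomorphism from $\cM_1$ onto $\cM_2$, the restriction $T|_{\cM_1}$ is itself a Jordan $*$-isomorphism onto $\cM_2$.

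There is essentially no obstacle here: the proof is a direct consequence of the structural theorem already established, together with the automatic unitality of a surjective Jordan $*$-isomorphism. The only subtlety worth noting is that one must invoke the finiteness of $\tau_1$ to ensure ${\bf 1}_{\cM_1}$ lies in $E(\cM_1,\tau_1)$, so that the formula $T(x)=BJ(x)$ may actually be evaluated at $x={\bf 1}_{\cM_1}$; without this, $B$ could not be identified from the hypothesis $T({\bf 1}_{\cM_1})={\bf 1}_{\cM_2}$.
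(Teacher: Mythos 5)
Your proof is correct and follows exactly the route the paper intends: the corollary is stated as an immediate consequence of Corollary \ref{cor:ontoF}, and your deduction (using $\tau_1(\mathbf{1}_{\mathcal{M}_1})<\infty$ with Proposition \ref{prop:car} to place $\mathcal{M}_1$ inside $E(\mathcal{M}_1,\tau_1)$, noting that a surjective Jordan $*$-homomorphism is unital, and then identifying $B=T(\mathbf{1}_{\mathcal{M}_1})J(\mathbf{1}_{\mathcal{M}_1})^{-1}=\mathbf{1}_{\mathcal{M}_2}$) is precisely the implicit argument. The unitality step via $2y=py+yp$ with $p=J(\mathbf{1}_{\mathcal{M}_1})$ a projection is sound, so nothing is missing.
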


\begin{corollary}
Let $(\cM_1,\tau_1)$ and $(\cM_2,\tau_2)$ be two semifinite von Neumann algebras.
If there exists an   order-preserving surjective isometry   $T:E(\cM_1,\tau_1) \rightarrow F(\cM_2,\tau_2)$ for some
 symmetrically $\Delta$-normed spaces $ E(\cM_1,\tau_1)$ and $F(\cM_2,\tau_2)$ ($\left\|\cdot\right\|_F$ is an  SLM $\Delta$-norm),
then
  $\cM_1$ and $\cM_2$ are  Jordan $*$-isomorphic.
\end{corollary}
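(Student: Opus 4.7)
The plan is to reduce the statement to a direct application of Corollary~\ref{cor:ontoF}, which already produces a Jordan $*$-isomorphism $J:\cM_1\to\cM_2$ in the same setting. Hence I need only verify that the hypotheses of the present corollary imply those of Corollary~\ref{cor:ontoF}, and then read off the conclusion.

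First I would check that $F(\cM_2,\tau_2)\subset \cM_2^\Delta$. By the very definition of an SLM $\Delta$-norm recalled in Section~\ref{s:p}, $F(\cM_2,\tau_2)$ is a subspace of $(\cL_{\log}+\cL_\infty)(\cM_2,\tau_2)$. On the other hand, since $\|\cdot\|_F$ is order continuous, Remark~\ref{remark:ocb} yields $F(\cM_2,\tau_2)=F(\cM_2,\tau_2)^{oc}\subset S_0(\cM_2,\tau_2)$. Intersecting these two inclusions gives precisely $F(\cM_2,\tau_2)\subset (\cL_{\log}+\cL_\infty)(\cM_2,\tau_2)\cap S_0(\cM_2,\tau_2)=\cM_2^{\Delta}$.

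Next I would transport order continuity from $F$ back to $E$. Every SLM $\Delta$-norm is in particular log-monotone, so Proposition~\ref{prop:o} applies to the order-preserving surjective isometry $T$ and forces $\|\cdot\|_E$ to be order continuous as well. This is essentially the only nontrivial link in the chain, and it relies on Lemma~\ref{lemma:order}, which says that the inverse of a surjective order-preserving isometry between symmetrically $\Delta$-normed spaces with log-monotone target is itself positive; thus a decreasing sequence $x_n\downarrow 0$ in $E(\cM_1,\tau_1)^+$ is mapped to $T(x_n)\downarrow 0$ in $F(\cM_2,\tau_2)^+$, whose norms tend to $0$ by the order continuity of $\|\cdot\|_F$.

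With these two conditions in hand, the hypotheses of Corollary~\ref{cor:ontoF} are all met, and that corollary yields a Jordan $*$-isomorphism $J:\cM_1\to\cM_2$ (the surjectivity of $J$ onto $\cM_2$, rather than merely onto a weakly closed $*$-subalgebra, is proven inside Corollary~\ref{cor:ontoF} using $s(T(x))=J(s(x))$, the containment $\cP_{fin}(\cM_2)\subset F(\cM_2,\tau_2)$, and the semifiniteness of $\cM_2$). There is accordingly no genuine obstacle: the deep work has been done in Theorem~\ref{cor:BJX}, Proposition~\ref{prop:o} and Corollary~\ref{cor:ontoF}, and the present corollary is the clean abstract conclusion of that machinery.
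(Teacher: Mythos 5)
Your proposal is correct and follows exactly the route the paper intends: the corollary is stated as an immediate consequence of Corollary~\ref{cor:ontoF}, and your verification that the hypotheses transfer (in particular that $F(\cM_2,\tau_2)\subset\cM_2^{\Delta}$ follows from the definition of an SLM $\Delta$-norm together with Remark~\ref{remark:ocb}, and that order continuity of $\|\cdot\|_E$ comes from Proposition~\ref{prop:o}) is precisely the bookkeeping the paper leaves implicit. Nothing further is needed.
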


When  $1\le p<\infty$ and $(\cM,\tau)$ is a finite  factor,
it is shown in \cite[Corollary 1]{Russo} that every order-preserving $L_p$-isometry from $T :\cM \stackrel{onto}{\longrightarrow} \cM$
 is indeed  a   $*$-isomorphism or $*$-anti-isomorphism (see also \cite[Theorem 1]{Katavolos2}).
The following corollary is a semifinite version of \cite[Corollary 1]{Russo} with significant extension.
\begin{corollary}
Suppose that the assumption of   Corollary \ref{cor:ontoF} are met and, in addition, $\cM_2$ is a factor.
Then, there is a constant $\alpha >0$ and a   $*$-isomorphism or a $*$-anti-isomorphism  $J: \cM_1 \longrightarrow \cM_2$ such that $T(x) = \alpha  J(x)$ for every $x\in E(\cM_1,\tau_2)\cap \cM_1$ and $T({\bf 1}_{\cM_1}) =\alpha {\bf 1}_{\cM_2}$.
\end{corollary}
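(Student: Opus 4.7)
The plan is to invoke Corollary \ref{cor:ontoF} as a black box and then exploit the hypothesis that $\cM_2$ is a factor to pin down both the operator $B$ and the Jordan structure of $J$.

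First, by Corollary \ref{cor:ontoF}, there exist a Jordan $*$-isomorphism $J\colon \cM_1\to\cM_2$ (onto, as established in the proof of that corollary) and a positive self-adjoint operator $B$ affiliated with the centre $\cZ(\cM_2)$, such that $T(x)=BJ(x)$ for every $x\in E(\cM_1,\tau_1)\cap\cM_1$. Since $\cM_2$ is a factor we have $\cZ(\cM_2)=\mathbb{C}\cdot{\bf 1}_{\cM_2}$, and any positive self-adjoint operator affiliated with $\mathbb{C}\cdot{\bf 1}_{\cM_2}$ is a non-negative scalar multiple of the identity. Thus $B=\alpha{\bf 1}_{\cM_2}$ for some $\alpha\ge 0$. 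If $\alpha=0$ then $T$ would vanish on $E\cap\cM_1$, which in turn would force $T\equiv 0$ on all of $E(\cM_1,\tau_1)$ by $\|\cdot\|_E$-density of $\cF(\tau_1)$ in $E(\cM_1,\tau_1)^{oc}=E(\cM_1,\tau_1)$ (the latter equality holding by Proposition \ref{prop:o}), contradicting that $T$ is an isometry onto a non-trivial space. Hence $\alpha>0$ and $T(x)=\alpha J(x)$ on $E(\cM_1,\tau_1)\cap\cM_1$.

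Next I would analyse $J$ using Lemma \ref{2.12}: there is a projection $z$ in the centre of the ultra-weak closure of $J(\cM_1)$ such that $J(\cdot)z$ is a $*$-homomorphism and $J(\cdot)({\bf 1}_{\cM_2}-z)$ is a $*$-anti-homomorphism. But since $J$ is surjective (by the proof of Corollary \ref{cor:ontoF}), the ultra-weak closure of $J(\cM_1)$ equals $\cM_2$, whose centre is $\mathbb{C}\cdot{\bf 1}_{\cM_2}$ by the factor assumption. Consequently $z\in\{0,{\bf 1}_{\cM_2}\}$, so $J$ is either a $*$-isomorphism (when $z={\bf 1}_{\cM_2}$) or a $*$-anti-isomorphism (when $z=0$) from $\cM_1$ onto $\cM_2$.

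Finally, for the identity $T({\bf 1}_{\cM_1})=\alpha{\bf 1}_{\cM_2}$: if $\tau_1({\bf 1}_{\cM_1})<\infty$ this follows at once by setting $x={\bf 1}_{\cM_1}$ in $T(x)=\alpha J(x)$ and using $J({\bf 1}_{\cM_1})={\bf 1}_{\cM_2}$ (normality and surjectivity of $J$). In the semifinite case when $\tau_1({\bf 1}_{\cM_1})=\infty$, ${\bf 1}_{\cM_1}\notin E(\cM_1,\tau_1)$, and one must interpret the equality via the strong resolvent limit used to define $B$ in Proposition \ref{3.9}: the spectral family $\{P_\lambda\}$ of $B$ satisfies $P_\lambda=0$ for $\lambda<\alpha$ and $P_\lambda={\bf 1}_{\cM_2}$ for $\lambda\ge\alpha$, so $T(e)=BJ(e)\to\alpha\,J({\bf 1}_{\cM_1})=\alpha{\bf 1}_{\cM_2}$ in the strong resolvent sense as $e$ runs through $\cP_{\mathrm{fin}}(\cM_1)$; this is the meaning of the final equality. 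The only step requiring care is this last one, ensuring that the scalar $\alpha$ extracted from $B=\alpha{\bf 1}_{\cM_2}$ is the same as the one governing the limiting behaviour of $T(e)$, but this is immediate from $B=\alpha{\bf 1}_{\cM_2}$ and the construction of $B$ in Proposition \ref{3.9}.
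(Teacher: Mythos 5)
Your proposal is correct and follows exactly the route the paper intends: the paper states this corollary without proof as an immediate consequence of Corollary \ref{cor:ontoF}, since the factor hypothesis forces $B$ (affiliated with $\cZ(\cM_2)=\mathbb{C}\,{\bf 1}_{\cM_2}$) to be a positive scalar and forces the central projection $z$ of Lemma \ref{2.12} to be $0$ or ${\bf 1}_{\cM_2}$. Your additional care in ruling out $\alpha=0$ and in interpreting $T({\bf 1}_{\cM_1})$ as the strong resolvent limit $B$ when $\tau_1({\bf 1}_{\cM_1})=\infty$ is sound and consistent with Proposition \ref{3.9}.
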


\section{Order-preserving isometries into  Lorentz spaces}\label{s:l}
It is known (see e.g. \cite{KPS,LSZ,DP2}) that every symmetrically normed operator  space is a subspace of $L_1(\cM,\tau)+\cM$.
Indeed, in the $\Delta$-normed setting,
the  $\cL_{\log}(\cM,\tau)+\cM$
 plays a similar role   as $L_1(\cM,\tau)+\cM$ does in the normed case.
In this section, we show that the class of SLM $\Delta$-normed space embraces a wide class  of symmetrically $\Delta$-normed spaces   used in analysis.

Let $p\in (0,\infty)$ and let $w$ be a weight (that is, a non-negative  measurable function on $(0,\infty)$ that is not identically zero).
The Lorentz space $\Lambda^p_w(0,\infty)$ is defined by
$$\left\{f\in S(0,\infty)\mid ~ \|f\|_{\Lambda_w^p}:= \left( \int_0^\infty \mu(t;f)^p w(t) dt \right)^{1/p}<\infty \right\}.$$
For a given weight $w$, we define  $W(t):=\int_0^t w(s)ds$.
We always assume that $W(t)>0$ for every $t\in (0,\infty)$.
It is shown in \cite{CKMP} that $\Lambda_w^p(0,\infty)$ is a linear space if and only if  $W$ satisfies the $\Delta_2$-condition, i.e., $W(2t)\le CW(t)$ for some $C>1$ and  all $t>0$.
Moreover, $\left\|\cdot\right\|_{\Lambda_w^p}$ is a complete quasi-norm    \cite{CS,KM} .
It is known that
$\left\|\cdot\right\|_{\Lambda _w^p}$ is order continuous if and only if $W(\infty)=\infty$ (see e.g. \cite{KM}).
We define
$$\Lambda_w^p(\cM,\tau) := \{x\in S(\cM,\tau):~ \mu(x)\in \Lambda_w^p(0,\infty)\}. $$
In particular, $\Lambda_w^p(\cM,\tau)$ is a quasi-Banach space equipped with quasi-norm $\|X\|_{ \Lambda_w^p}=\|\mu(X)\|_{ \Lambda_w^p}$, $X\in  \Lambda_w^p(\cM,\tau)$ \cite{Sukochev,HLS2017}.

Assume that $w$ is a \textbf{strictly positive decreasing} function on $(0,\infty)$ such that $W(\infty)=\infty$.
Then, $W$ satisfies the $\Delta_2$-condition.
 Moreover,  Proposition \ref{prop:ococ} implies that $\Lambda_w^p(\cM,\tau)$ has order continuous \mbox{(quasi-)}norm and therefore, $\Lambda_w^p(\cM,\tau)\subset \cM^\Delta \subset  S_0(\cM,\tau)$.
In this section, we show that all $\Lambda_w^p(\cM,\tau)$ has SLM quasi-norms.

We note that if there is an isometry $ T$ from a $\Delta$-normed symmetric space  $E(\cM_1,\tau_1)$ into $\Lambda^p_w(\cM,\tau)$, then $E(\cM_1,\tau_1)$ must be quasi-normed.
Indeed, for every $X\in E(\cM_1,\tau_1)$ and $\lambda\in \mathbb{C}$, we have
$$\|\lambda x\|_E = \|T(\lambda x)\|_{\Lambda^p_w} = |\lambda| \|T(x)\|_{\Lambda^p_w} =|\lambda|\|x\|_E,$$
which implies that $E(\cM_1,\tau_1)$ is quasi-normed.
Moreover, if this isometry is surjective, then $E(\cM_1,\tau_1)$ is a quasi-Banach space.

\begin{remark}\label{remark:ae}
Let $x\in S(\cM,\tau)$.
Assume that $w$ is a  strictly positive decreasing function on $(0,\infty)$.
It is easy to see that  $\mu(x)^p w$ and $\mu(\mu(x)^p w)$ are equimeasurable (see e.g. \cite[Chapter III, Section 1]{LSZ}).
Since $\mu(x)^p w$ is a  decreasing function and $\mu(\mu(x)^p w)$ is right-continuous, it is easy to see  that
 $\mu(x)^p w= \mu(\mu(x)^p w)$ a.e..
\end{remark}

The following result is an easy consequence of Corollary \ref{prop:e}.
\begin{corollary}\label{cor:pmaj}Assume that $w$ is a strictly positive decreasing function on $(0,\infty)$.
Let  $ a,b\in \Lambda^p_w(\cM,\tau)$, $p\in (0,\infty)$.
If  $b \prec\prec _{\log} a $,
then $w(t)\mu(t;b)^p \prec \prec w(t)\mu(t;a)^p$.
In particular, $\|b\|_{ \Lambda^p_w}\le \|a\|_{ \Lambda^p_w}$.
\end{corollary}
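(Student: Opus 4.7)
The plan is to reduce the claim to a Hardy-Littlewood-Polya type submajorisation inequality with a decreasing weight. First, applying Corollary \ref{prop:e} to the hypothesis $b \prec\prec_{\log} a$ will give the ordinary submajorisation $\mu(b)^p \prec\prec \mu(a)^p$, i.e.\
\begin{equation*}
\int_0^s \mu(r;b)^p\, dr \le \int_0^s \mu(r;a)^p\, dr, \quad s > 0.
\end{equation*}
The remaining task is to upgrade this to the weighted inequality
\begin{equation*}
\int_0^t w(s)\mu(s;b)^p\, ds \le \int_0^t w(s)\mu(s;a)^p\, ds, \quad t > 0.
\end{equation*}
Once this is in hand, Remark \ref{remark:ae} will finish the job: since $w\mu(\cdot;b)^p$ and $w\mu(\cdot;a)^p$ are products of nonnegative decreasing functions, each coincides almost everywhere with its own decreasing rearrangement, so the weighted inequality for each $t > 0$ is precisely the submajorisation $w\mu(\cdot;b)^p \prec\prec w\mu(\cdot;a)^p$. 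Letting $t \to \infty$ will then yield the $\|\cdot\|_{\Lambda_w^p}$ estimate.

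To pass from the unweighted to the weighted inequality, I will use the standard fact that a nonnegative decreasing function is the integral of indicators of initial segments: for fixed $t > 0$, let $\nu$ denote the Lebesgue-Stieltjes measure $-dw$ on $(0, t]$, which is nonnegative since $w$ is decreasing. Then for $s \in (0, t]$,
\begin{equation*}
w(s) = w(t) + \int_{(0, t]} \chi_{(0, r]}(s)\, d\nu(r).
\end{equation*}
Multiplying this identity by a nonnegative measurable $f$ and applying Fubini gives
\begin{equation*}
\int_0^t w(s) f(s)\, ds = w(t) \int_0^t f(s)\, ds + \int_{(0, t]} \left( \int_0^r f(s)\, ds \right) d\nu(r).
\end{equation*}
Applying this identity successively to $f = \mu(b)^p$ and $f = \mu(a)^p$ and then invoking the pointwise inequality from the first step (both at the endpoint $t$ and at each $r \in (0, t]$ appearing inside the integral), the desired weighted inequality will drop out, because $w(t) \ge 0$ and $d\nu$ is a nonnegative measure.

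The only technical point to check will be the finiteness required for Fubini: at the outer scale this is automatic since $a, b \in \Lambda_w^p(\cM, \tau)$, while at the inner scale the strict positivity of $w$ (so that $w(t) > 0$ for every finite $t$) ensures $\int_0^t \mu(\cdot)^p\, ds < \infty$ via the bound $\int_0^t \mu(\cdot)^p \le w(t)^{-1}\int_0^t w\mu(\cdot)^p$. Apart from this bookkeeping I do not anticipate any real obstacle; the whole argument is a weighted adaptation of the classical Hardy-Littlewood-Polya principle, with the logarithmic submajorisation hypothesis feeding in only through the opening invocation of Corollary \ref{prop:e}.
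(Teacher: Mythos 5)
Your argument is correct, but it takes a genuinely different route through the middle of the proof. You first discard the weight, applying Corollary \ref{prop:e} to obtain the unweighted submajorisation $\mu(b)^p\prec\prec\mu(a)^p$, and then reinstate the weight by the classical Hardy/Abel-summation device: writing the decreasing weight on $(0,t]$ as $w(t)$ plus a nonnegative superposition of indicators $\chi_{(0,r]}$ against the measure $-dw$, and integrating the partial-integral inequality term by term. The paper instead keeps the weight inside the \emph{logarithmic} submajorisation from the outset: since logarithms convert products into sums, adding $\int_0^t\log w(s)^{1/p}\,ds$ to both sides of $\int_0^t\log\mu(s;b)\,ds\le\int_0^t\log\mu(s;a)\,ds$ immediately gives $w^{1/p}\mu(b)\prec\prec_{\log}w^{1/p}\mu(a)$ (these are still decreasing functions, equal a.e.\ to their rearrangements by Remark \ref{remark:ae}), after which a single application of Corollary \ref{prop:e} yields $w\,\mu(b)^p\prec\prec w\,\mu(a)^p$. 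The paper's route exploits the fact that $\prec\prec_{\log}$ is stable under multiplication by a common positive decreasing weight -- a feature ordinary submajorisation lacks and which you must recover via the extra Hardy-lemma step. In exchange, your weighted step is elementary classical real analysis (Tonelli for nonnegative integrands, so no integrability is actually needed for the interchange), and your finiteness observation $\int_0^t\mu(\cdot)^p\le w(t)^{-1}\int_0^t w\,\mu(\cdot)^p<\infty$, using the strict positivity of $w$, is exactly the right bookkeeping. Both proofs conclude identically: Remark \ref{remark:ae} identifies the weighted inequality with the asserted submajorisation, and letting $t\to\infty$ gives $\|b\|_{\Lambda^p_w}\le\|a\|_{\Lambda^p_w}$.
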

\begin{proof}
It follows from $ \mu(b)\prec \prec_{\log} \mu(a)$ that
\begin{align*}
\int_0^t \log (w(t)^{1/p}\mu(t;b))dt =\int_0^t \left(\log w(t)^{1/p}+\log \mu(t;b)\right)dt \\
 \le  \int_0^t \left( \log w(t)^{1/p}+\log \mu(t;a)\right)dt =\int_0^t \log \left(w(t)^{1/p}\mu(t;a)\right)dt .
\end{align*}
Thus,  Corollary \ref{prop:e} together  with Remark \ref{remark:ae} implies that
$w(t)\mu(t;b)^p \prec \prec w(t)\mu(t;a)^p $.
\end{proof}
Recall the definition of strictly  $K$-monotone norms defined in Section \ref{s:p}.
The following lemma is an easy consequence of the strict $K$-monotonicity of $L_2$-norm,
showing  that $L_p(\cM,\tau)$ is SLM quasi-normed for every $p\in(0,\infty)$.
\begin{lemma}\label{lemma:1.6}
Let $a,b\in L_p(\cM,\tau)$, $0<p<\infty$, be such that $b \prec \prec_{\log} a$.
If  $\left\|a\right\|_p =\left\|b\right\|_p$, then $\mu(b) =\mu(a )$.
\end{lemma}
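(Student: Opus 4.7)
The plan is to reduce the claim to strict $K$-monotonicity of the $L_2$-norm, using Corollary \ref{prop:e} as a bridge between logarithmic submajorisation and ordinary submajorisation. The exponent to use is $p/2$ rather than $p$, so that the $L_p$-norm condition translates into an $L_2$-norm condition on the rearranged functions.

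First, from $b \prec\prec_{\log} a$ I would apply Corollary \ref{prop:e} with exponent $p/2$ (in place of $p$) to obtain
\[
\mu(b)^{p/2} \prec\prec \mu(a)^{p/2}
\]
as functions on $(0,\infty)$. Set $f := \mu(a)^{p/2}$ and $g := \mu(b)^{p/2}$; both are non-negative, decreasing, right-continuous functions. Since $a,b \in L_p(\cM,\tau)$, we have $f,g \in L_2(0,\infty)$, and the hypothesis $\|a\|_p = \|b\|_p$ becomes
\[
\|f\|_{L_2(0,\infty)}^2 \;=\; \int_0^\infty \mu(t;a)^p\,dt \;=\; \int_0^\infty \mu(t;b)^p\,dt \;=\; \|g\|_{L_2(0,\infty)}^2.
\]

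Next I would invoke the strict $K$-monotonicity of the $L_2$-norm: for non-negative decreasing functions $g \prec\prec f$ in $L_2(0,\infty)$ with $\|g\|_{L_2} = \|f\|_{L_2}$, one has $f = g$ almost everywhere. This is classical and follows quickly from Proposition \ref{Prop:lefunction2} applied to the strictly convex function $\varphi(u) = u^2$ together with the equality case in the Hardy--Littlewood--P\'olya inequality; alternatively one argues directly: the function $H(t) := \int_0^t (f-g)\,ds$ is non-negative, starts at $0$, ends at $0$, and the equality $\int f^2 = \int g^2$ forces $H \equiv 0$ after integration by parts against $f+g$ (since $f+g \ge 0$ and is decreasing). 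From $f = g$ a.e.\ and right-continuity of $\mu(a),\mu(b)$, raising to the power $2/p$ yields $\mu(a)(t) = \mu(b)(t)$ for every $t>0$, which is the desired conclusion.

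The only genuinely delicate point is the passage from submajorisation with equal $L_2$-norms to pointwise equality of the decreasing rearrangements; beyond that, the argument is a mechanical combination of Corollary \ref{prop:e} and the well-known strict $K$-monotonicity of $\|\cdot\|_{L_2}$. Note that the choice of exponent $p/2$ is essential: applying Corollary \ref{prop:e} with exponent $p$ instead would yield $\mu(b)^p \prec\prec \mu(a)^p$ with equal $L_1$-integrals, and the $L_1$-norm is \emph{not} strictly $K$-monotone, so the argument would collapse.
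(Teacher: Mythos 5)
Your proposal is correct and follows essentially the same route as the paper: apply Corollary \ref{prop:e} with exponent $p/2$ to convert $b\prec\prec_{\log}a$ into $\mu(b)^{p/2}\prec\prec\mu(a)^{p/2}$ in $L_2(0,\infty)$, and then invoke the strict $K$-monotonicity of $\|\cdot\|_2$ (which the paper simply cites from \cite{SV} and \cite{CDSS} rather than reproving, so your inline integration-by-parts sketch --- whose claim that $H$ ``ends at $0$'' is not actually justified --- is dispensable). The paper phrases the conclusion as a proof by contradiction, but that is a cosmetic difference.
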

\begin{proof}
Assume that $\mu(b) \ne \mu(a)$, i.e., $\mu(b)^{p/2} \ne \mu(a)^{p/2}$.
Corollary \ref{prop:e} implies that $\mu(b)^{p/2} \prec\prec \mu(a)^{p/2}$.
Since $\mu(a)^{p/2},\mu(b)^{p/2}\in L_2(0,\infty)$
 and the $L_2$-norm $\left\| \cdot \right\|_2$ is strictly $K$-monotone (see e.g. \cite[Section 5]{SV} or \cite{CDSS}), it follows that
$$ \left\|b\right\|_p^p=\left\|\mu(b)\right \|_p^p = \left\|\mu(b)^{p/2} \right\|_2 ^2< \left\|\mu(a)^{p/2} \right\|_2^2=\left\|\mu(a) \right\|_p^p=\left\|a\right\|_p^p ,$$
which is a contradiction, that is, $\mu(b) =\mu(a)$.
\end{proof}

The following result is an easy consequence of Lemma \ref{lemma:1.6}, showing  that every $\left\|\cdot \right\|_{\Lambda _w^p }$  is an  SLM quasi-norm.
\begin{theorem}\label{cor:mu=Lo}
Assume that  $0<p<\infty$ and  $w$ is a strictly  positive decreasing function on $(0,\infty)$.
Let  $a,b\in \Lambda^p_w(\cM,\tau)$ be such that $b \prec \prec_{\log} a$.
If   $\left\|a \right\|_{\Lambda^p_w} = \left\|b\right\|_{\Lambda^p_w}$, then $\mu(b) =\mu(a)$.
In particular, $\left\|\cdot\right\|_{\Lambda _w^p }$  is an  SLM quasi-norm.
\end{theorem}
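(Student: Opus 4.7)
The natural strategy is to transplant the problem into the commutative setting on $(0,\infty)$ and then invoke Lemma \ref{lemma:1.6}. Define
\[
f(t):=w(t)^{1/p}\mu(t;b),\qquad g(t):=w(t)^{1/p}\mu(t;a),\qquad t>0.
\]
Since $w$ is strictly positive and decreasing and $\mu(a),\mu(b)$ are decreasing, $f$ and $g$ are themselves nonnegative decreasing (right-continuous by Remark \ref{remark:ae}), hence agree with their own decreasing rearrangements. Moreover, under the identification of the commutative von Neumann algebra $L^\infty(0,\infty)$ with its affiliated measurable functions, one has $\|f\|_p^p=\int_0^\infty w(t)\mu(t;b)^p\,dt=\|b\|_{\Lambda_w^p}^p$ and analogously $\|g\|_p=\|a\|_{\Lambda_w^p}$, so the hypothesis $\|a\|_{\Lambda_w^p}=\|b\|_{\Lambda_w^p}$ translates into $\|f\|_p=\|g\|_p$ in $L_p(0,\infty)$.

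Next I would verify the log-submajorisation $f\prec\prec_{\log}g$. For every $t>0$,
\[
\int_0^t\log f(s)\,ds-\int_0^t\log g(s)\,ds=\int_0^t\bigl(\log\mu(s;b)-\log\mu(s;a)\bigr)\,ds\le 0,
\]
where the last inequality is the assumption $\mu(b)\prec\prec_{\log}\mu(a)$. The care required here is that the individual integrals involving $\log w^{1/p}$ need not be finite; however, one needs only that the \emph{difference} of the integrals of $\log f$ and $\log g$ is nonpositive, which is what the display above provides after splitting the integrand in $L_1+L_\infty$ fashion (permissible because $f,g\in\Lambda_w^p(0,\infty)\subset(L_{\log}+L_\infty)(0,\infty)$, so $\log_+f,\log_+g\in L_1+L_\infty$).

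Having $f\prec\prec_{\log}g$ in $L_p(0,\infty)$ together with $\|f\|_p=\|g\|_p$, Lemma \ref{lemma:1.6} applied in the commutative setting $(L^\infty(0,\infty),m)$ yields $\mu(f)=\mu(g)$. Since $f$ and $g$ are already decreasing and right-continuous, $\mu(f)=f$ and $\mu(g)=g$, so $f=g$ pointwise on $(0,\infty)$. Dividing by the strictly positive factor $w^{1/p}$ gives $\mu(b)=\mu(a)$ almost everywhere; right-continuity of generalised singular value functions then upgrades this to pointwise equality, proving the first assertion.

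For the ``in particular'' clause, suppose $a,b\in\Lambda_w^p(\cM,\tau)$ satisfy $\mu(b)\prec\prec_{\log}\mu(a)$ with $\mu(b)\ne\mu(a)$. By Corollary \ref{cor:pmaj} we have $\|b\|_{\Lambda_w^p}\le\|a\|_{\Lambda_w^p}$, and by the contrapositive of the equality case just established, $\|b\|_{\Lambda_w^p}\ne\|a\|_{\Lambda_w^p}$; hence the inequality is strict, which is precisely the SLM property. The only delicate point in the entire argument is the log-submajorisation step for $f$ and $g$, where one must handle the possibly unbounded behaviour of $\log w$ via cancellation rather than termwise integrability; everything else is a routine transfer from the general semifinite case to the commutative model followed by Lemma \ref{lemma:1.6}.
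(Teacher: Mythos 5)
Your proof is correct and follows essentially the same route as the paper: the paper applies Lemma \ref{lemma:1.6} in $L_1(0,\infty)$ to the pair $\mu(b)^p w$ and $\mu(a)^p w$, while you apply it in $L_p(0,\infty)$ to their $p$-th roots $w^{1/p}\mu(b)$ and $w^{1/p}\mu(a)$ --- the same argument up to a trivial reparametrisation, with the identical splitting of the logarithmic integral (note that on each finite interval $(0,t)$ one has $\log w\in L_1$ since $w$ is decreasing, strictly positive and locally integrable, so the splitting is even termwise legitimate) and the identical appeal to Corollary \ref{cor:pmaj} for the SLM conclusion.
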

\begin{proof}
Since $\mu(a),\mu(b)\in \Lambda^p_w(0,\infty)$, it follows that $ \mu(a)^p w, \mu(b)^p w \in L_1(0,\infty)$.
It follows from Remark \ref{remark:ae} that
\begin{align*}
 \int_0^s \log  \mu(t;\mu(b)^p w)dt &= \int_0^s \log \mu(t;b)^p w(t)  dt =\int_0^s p\log \mu(t;b)  dt+\int_0^s \log  w(t)  dt \\
&\le  \int_0^s p\log \mu(t;a) dt+\int_0^s \log  w(t) dt \\
&= \int_0^s \log \mu(t;a)^p w(t) dt=  \int_0^s \log  \mu(t;\mu(a)^p w)dt
\end{align*}
and, by the assumption, we have
$$\|\mu(b)^p w\|_1  = \int_0^\infty   \mu(t;\mu(b)^p w)dt =  \int_0^\infty    \mu(t;\mu(a)^p w)dt=\|\mu(a)^p w\|_1.$$
By Lemma \ref{lemma:1.6}, we have $ \mu(\mu(b)^p w) = \mu(\mu(a)^p w)$, which implies that $\mu(b)^p w = \mu(a)^p w$ a.e. (see Remark \ref{remark:ae}).
Since $w$ is a strictly positive function on  $(0,\infty)$, it follows from the right-continuity of $\mu(a)$ and $\mu(b)$ that $\mu(a) =\mu(b)$,
which together with
 Corollary \ref{cor:pmaj} implies that $\left\|\cdot\right\|_{\Lambda _w^p }$ is an SLM $\Delta$-norm.
\end{proof}

Recall that $\Lambda^p_w(\cM,\tau)\subset \cM^\Delta$ and $\left\|\cdot\right\|_{\Lambda^p_w}$ is an order continuous $\Delta$-norm whenever $w$ is a strictly positive decreasing function on $(0,\infty)$ such that $W(\infty)=\infty$.
Moreover,
Theorem \ref{cor:mu=Lo} guarantees that all $\Lambda_w^p(\cM,\tau)$ have SLM quasi-norms.
Appealing to  Corollary \ref{cor:4.5} and Corollary \ref{cor:ontoF}, we obtain immediately  the general form  of order-preserving isometries into/onto Lorentz spaces, respectively, which complements the results in \cite[Section 5]{CMS}.


\renewcommand{\baselinestretch}{1}

$\\$
{\bf Acknowledgements}
The authors would like to thank  Jonathan Arazy, Vladimir Chilin, Jan Hamhalter,  Anna Kaminska and Lajos Molnar for useful
comments on the existing literature,  and Galina Levitina for  helpful discussions.

The first author acknowledges the support of University International Postgraduate Award (UIPA).
The   second   author was supported by the Australian Research Council.
The third author was partly funded by a UNSW Scientia Fellowship.


\end{document}